\numberwithin{equation}{section}
\newcommand{\comm}[1]{}
\def\diam{\operatorname{diam}}
\def\dist{\operatorname{dist}}
\def\crit{\operatorname{Crit}}
\def\jrit{\operatorname{Jrit}}
\def\({\left(}
\def\){\right)}
\def\oli{\overline}
\def\raw{\rightarrow}
\def\fa{~\,~\text{for all}~\,}
\def\no={\neq}
\def\sm{\setminus}
\def\C{{\mathbb C}}
\def\P{{\mathbb P}}
\def\BB{{\mathcal B}}
\def\EE{{\mathcal E}}
\def\FF{{\mathcal F}}
\def\JJ{{\mathcal J}}
\def\NN{{\mathcal N}}
\def\OO{{\mathcal O}}
\def\QQ{{\mathcal Q}}
\def\RR{{\mathcal R}}
\def\al{\alpha}
\def\ga{\gamma}
\def\vep{\varepsilon}
\def\io{\iota}
\def\la{\lambda}
\def\De{\Delta}
\def\La{\Lambda}
\def\Om{\Omega}
\theoremstyle{plain}
\newtheorem{Thm}{Theorem}[section]
\newtheorem{Lem}[Thm]{Lemma}
\newtheorem{Cor}[Thm]{Corollary}
\newtheorem*{Ack}{Acknowledgement}
\newtheorem{Def}[Thm]{Definition}
\theoremstyle{remark}
\newtheorem{Rem}[Thm]{Remark}
\begin{document}

\title{Slowly recurrent Collet--Eckmann maps with non-empty Fatou set}

\author{Magnus Aspenberg, Mats Bylund and Weiwei Cui}

\date{}

\maketitle

\begin{abstract}
In this paper we study rational Collet--Eckmann maps for which the Julia set is not the whole sphere and for which the critical points are recurrent at a slow rate. In families where the orders of the critical points are fixed, we prove that such maps are Lebesgue density points of hyperbolic maps. In particular, if all critical points are simple, they are Lebesgue density points of hyperbolic maps in the full space of rational maps of any degree $d \geq 2$. 
  \end{abstract}

 \section{Introduction}

Uniformly expanding maps have the property that nearby points on the Julia set repel each other at a uniform rate (with respect to some smooth metric). One of the central problems in complex dynamics is to prove that the set of these so called hyperbolic maps is open and dense in the parameter space of rational maps (or other complex analytic families of maps). This \emph{conjecture by P. Fatou} in the 1920s has been proven in the real case \cite{GSW, ML2, SSK}, but is still open in the complex setting.  In recent years,  a great deal of attention has been focused on maps which are non-hyperbolic but satisfy a certain non-uniformly expanding condition, like the Misiurewicz condition (critically non-recurrent or even postcritically finite maps),  the Collet--Eckmann condition or other more general summability conditions, semi-hyperbolicity etc. Conjecturally, almost every map is hyperbolic or satisfies such a non-uniformly expanding condition. This would also imply that the Fatou conjecture is true. In this paper, we focus on maps which satisfy the Collet--Eckmann condition. Our result demonstrates that any such map, for which the critical points are allowed to be recurrent at a slow rate (slowly recurrent maps), can be perturbed into hyperbolic maps in a strong sense; they are Lebesgue density points of hyperbolic maps. We discuss the (rather weak) condition on slow recurrence more below.

The Collet--Eckmann condition stems from the pioneering works by P.~Collet and J.-P.~Eckmann in the 1980s \cite{Collet-Eckmann1}. In the real setting, there are many works on the perturbation of such maps, see e.g. the seminal papers \cite{BC1, BC2} by M.~Benedicks and L.~Carleson. M.~Tsujii generalised these results for real maps in \cite{Tsuji}, see also the more recent work of B.~Gao and W.~Shen \cite{Gao-Shen}. We are going to study perturbations of such maps in the complex setting. For the quadratic family and other unicritical families, J.~Graczyk and G.~\'{S}wi\c{a}tek recently made an extensive study of perturbations of typical Collet--Eckmann maps with respect to harmonic measure, in a series of papers \cite{GSW-Fine, GSW-Struc, GSW-Harmonic, GSW-Logd}. M.~Benedicks and J.~Graczyk also have an unpublished work on perturbations on such (quadratic or, more generally, unicritical) maps. The maps there and in the recent papers \cite{GSW-Fine, GSW-Struc} are also slowly recurrent, and hence the results in this paper is partially a generalisation of some of those results. We will not use harmonic measure, but develop the classical Benedicks--Carleson parameter exclusion techniques and combining it with  strong results on transversality, by G. Levin \cite{Levin-book}. Technically, this paper is closely related to \cite{MA7}.

Let $f$ be a rational map. As usual let $\JJ(f)$ and $\FF(f)$ denote the Julia and Fatou set of $f$ respectively. Let $\crit(f)$ be the set of critical points of $f$, i.e. the set of points with vanishing spherical derivative. With $\jrit(f)$  we mean the the set of critical points of $f$ contained in the Julia set, i.e. $\jrit(f) = \crit(f) \cap \JJ(f)$. As is standard, we let $f^n$ denote the $n$-th iterate of $f$.

In this paper we will consider perturbations of rational maps satisfying the following two properties. Recall that a rational map is called \emph{hyperbolic} if it is expanding on the Julia set or, equivalently, if every critical point belongs to the Fatou set and is attracted to an attracting cycle. If a rational map is not hyperbolic, it is called \emph{non-hyperbolic}. Derivatives are always with respect to the spherical metric on the Riemann sphere. 
 
 \begin{Def}[Collet--Eckmann condition]\label{CE}
A non-hyperbolic rational map $f$ without parabolic periodic points satisfies the \emph{Collet--Eckmann condition}, if there exist $C>0$ and $\gamma>0$ such that for each critical point $c$ in the Julia set of $f$, one has
\[
\vert Df^{n}(f(c))\vert \geq Ce^{\gamma n}~\,~\text{for all}~\, n\geq 0.
\]
 \end{Def}
 
 We will often refer to the constant $\gamma$ appearing in the above definition as the \emph{Lyapunov exponent} or simply the \emph{exponent}. Notice that the Collet--Eckmann condition is equivalent to requiring the lower Lyapunov exponent at critical values (in the Julia set) to be strictly positive.

  \begin{Def}[Slow recurrence]\label{SL}
A point $z$ is said to be \emph{slowly recurrent} if for any $\alpha>0$, there exists $K>0$ such that
\begin{equation} \label{slow-recurrence}
\dist(f^{n}(z), \jrit(f))\geq Ke^{-\alpha n}~\,~\text{for all}~\,n\geq 0.
\end{equation}
Moreover, we say that $f$ is slowly recurrent if every point in $\jrit(f)$ is slowly recurrent.
 \end{Def}

 This condition is conjecturally generic, as for example in the real quadratic family \cite{AM}. In fact, every Collet--Eckmann map has {\em some} $\al > 0$ for which \eqref{slow-recurrence} holds; see \cite[Lemma 2.2 or Lemma 2.3]{De-Pr-Ur}.

 We denote by $\RR_d$,  the space of rational maps of degree $d$. In this paper we always assume that $d \geq 2$. If we write $f(z) = p(z)/q(z)$, where $p$ and $q$ are polynomials, and the maximal degree of $p$ and $q$ is $d$, there are two local charts on the coefficients; one for the case when $\deg(p)=d$ and another for $\deg(q) = d$.  The Lebesgue measure on each of these charts are not equal but mutually absolutely continuous. So talking about sets of positive measure is independent of the chart used. We also mention that the Fubini-Study metric on $\RR_d$ (which is a measure on the projective space $\P^{2d+1}(\C)$) is mutually absolutely continuous with respect to the Lebesgue measure on each chart.

We will also consider a certain normalisation of the space $\RR_d$ as follows, following G. Levin \cite{Levin-book, Levin-multipliers}. We say that two maps $f$ and $g$ are equivalent if they are conjugate by a M\"obius transformation. Then we can consider the space $\La_{d,\oli{p'}} \subset \RR_d$, (see \cite{Levin-book}) up to equivalence, as the set  of rational maps $f$ of degree $d \geq 2$ with precisely $p'$ critical points, i.e. $\crit = \{ c_1, \ldots, c_{p'} \}$, with corresponding multiplicities $\oli{p'} = \{m_1, \ldots, m_{p'} \}$ (in the same order). This means in particular that all critical points move analytically inside $\La_{d,\oli{p'}}$. 

 We will prove the following result.

 \begin{Thm} \label{main1}
Any slowly recurrent rational Collet--Eckmann map $f \in \La_{d,\oli{p'}}$ of degree $d \geq 2$, for which the Julia set is not the entire sphere, is a Lebesgue density point of hyperbolic maps in $\La_{d,\oli{p'}}$.
 \end{Thm}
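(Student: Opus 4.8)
The plan is to adapt the Benedicks--Carleson parameter exclusion scheme to the complex analytic family $\La_{d,\oli{p'}}$, using transversality as the replacement for the monotonicity arguments available in the real one-parameter case. Fix the Collet--Eckmann map $f_0=f$ with exponent $\gamma>0$, and assume $\JJ(f_0)\neq\widehat\C$, so that $f_0$ has a nonempty Fatou set; since $f_0$ is non-hyperbolic with no parabolic cycles and satisfies Collet--Eckmann, all Fatou components are (eventually) attracting or Siegel/Herman, but the Collet--Eckmann hypothesis together with slow recurrence should rule out rotation domains, so the Fatou set is the basin of finitely many attracting cycles. The goal is to show that for every $\delta>0$ the set of hyperbolic parameters in the ball $B(f_0,\delta)\subset\La_{d,\oli{p'}}$ has Lebesgue density tending to $1$ as $\delta\to 0$.

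First I would set up, for a parameter $a$ near $a_0=f_0$, the itineraries of the critical values $f_a(c_i(a))$ and the usual partition of time into free periods and bound periods, with a large deviation / exponential estimate controlling the loss of derivative during bound periods in terms of $\dist(f_a^n(c_i(a)),\jrit(f_a))$. One inductively maintains, along a shrinking nested sequence of parameter sets $\Omega_N$, two conditions: a basic approach-rate (slow recurrence) estimate $\dist(f_a^n(c_i(a)),\jrit(f_a))\geq e^{-\alpha n}$ for a small $\alpha$, and the Collet--Eckmann bound $|Df_a^n(f_a(c_i(a)))|\geq C e^{\gamma' n}$ with $\gamma'$ slightly smaller than $\gamma$. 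The parameters violating the approach-rate condition at time $n$ are excluded; the key point is that the excluded set at each step has measure exponentially small in $n$. This requires: (i) distortion control, i.e. that on each surviving parameter piece the map $a\mapsto f_a^n(c_i(a))$ has bounded distortion compared to the dynamical derivative $Df_a^n$ at the corresponding orbit (the ``main distortion lemma''), and (ii) a lower bound on the ``size'' (derivative of the parameter dependence) of the image of a surviving piece, so that a small excluded portion in the dynamical plane pulls back to a small portion in parameter space. Step (ii) is precisely where Levin's transversality results from \cite{Levin-book,Levin-multipliers} enter: they give that the critical-value map is transverse to the relevant dynamically-defined foliations/varieties, quantitatively, so that the parameter derivative $\partial_a f_a^n(c_i(a))$ is comparable to $Df_a^{n-1}(f_a(c_i(a)))$ up to controlled constants. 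That $\La_{d,\oli{p'}}$ is multidimensional (several critical points, parameter space of dimension $>1$) means one works with a family of almost-parallel ``most contracted'' directions and argues coordinate-wise, but the structure is the same.

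Having run the exclusion, one obtains a positive-measure set $\Omega_\infty\subset B(f_0,\delta)$ of parameters that are Collet--Eckmann and slowly recurrent; more importantly, a Borel--Cantelli / measure-counting argument shows that the complement $B(f_0,\delta)\setminus(\text{hyperbolic parameters})$ has measure at most $\sum_n (\text{exponentially small in }n)$ times the relevant scale, which after rescaling gives density $\to 1$. The mechanism for actually producing \emph{hyperbolic} (not merely Collet--Eckmann) parameters is the standard one: parameters for which some critical orbit, during a long bound period, falls into the immediate basin of an attracting cycle of $f_0$ (which persists under perturbation because $\JJ(f_0)\neq\widehat\C$ provides such a cycle) become hyperbolic, and the set of such parameters is open and, by the transversality + distortion estimates, occupies a definite proportion of each surviving piece at each scale; the non-hyperbolic survivors form a set of measure zero, or at least of density zero at $f_0$. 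Concretely, I would show that in each small cube of side $\sim e^{-\alpha n}$ around $f_0$ that has survived to stage $n$, a fixed fraction of parameters have the $n$-th critical image inside a fixed Fatou component of $f_0$, hence are hyperbolic.

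The main obstacle will be establishing the distortion estimate (i) uniformly along the exclusion in the multidimensional parameter space: one must control the non-linearity of the high iterates $f_a^n$ both in the dynamical variable and in $a$, simultaneously for all $p'$ critical points whose bound periods overlap in complicated ways, and the slow-recurrence bound is exactly the borderline input that makes the bound-period distortion sums converge. Keeping the two induction hypotheses (approach rate and Collet--Eckmann) consistent — choosing $\alpha$ small enough relative to $\gamma$ so that the derivative loss from recurrence is dominated by the exponential gain, while $\alpha$ is not so small that the surviving set is emptied — is the delicate bookkeeping at the heart of the argument, and is where the analogy with \cite{MA7} and the Benedicks--Carleson method is pushed hardest.
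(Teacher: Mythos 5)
Your skeleton --- Benedicks--Carleson parameter exclusion, Levin's transversality as the phase/parameter derivative link, bound/free periods, distortion control, and a large deviations count on essential returns --- matches the paper. Two steps, however, are off. The first is the mechanism you give for producing \emph{hyperbolic} (not merely Collet--Eckmann) parameters: you assert that during a long \emph{bound period} a critical orbit can fall into the immediate basin of an attracting cycle. During a bound period the orbit shadows the early part of a critical orbit that lies in the Julia set of the unperturbed map, so it cannot enter the Fatou set at that stage. The paper's mechanism occurs instead at \emph{escape}: one fixes $\vep_0 \ll S$, notes that $\widehat{\C}\setminus\NN_{\vep_0}$ is a compact subset of $\FF(f_0)$ and hence (for $\vep$ small) of $\FF(f_a)$ for all $a\in\QQ$, and observes that once a partition element $A$ escapes, i.e.\ $\diam\xi_{n,l}(A)\geq S$, bounded distortion forces a $(1-\vep_0')$ fraction of $\xi_{n,l}(A)$ to lie outside $\NN_{\vep_0}$, hence in the Fatou set, giving estimate \eqref{hyp-density}. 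Intersecting over all $l$ and noting that $f_a$ is hyperbolic iff all critical points lie in the Fatou set completes the density estimate. Your version, based on bound periods, would not run.

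The second issue is your treatment of the multidimensional parameter space $\La_{d,\oli{p'}}$ via ``almost-parallel most-contracted directions, coordinate-wise.'' Levin's transversality result is a statement about one-dimensional analytic families $f_a = f_0 + a\,u + \OO(a^2)$ and it holds for almost every tangent direction $u$ at $f_0$. The paper exploits this by running the entire exclusion argument inside a one-dimensional parameter square $\QQ$ in a Levin-non-degenerate direction, obtaining density $\to 1$ on that slice, and then invoking Fubini's theorem (the degenerate directions form a null set) to pass to full Lebesgue density in $\La_{d,\oli{p'}}$. Your multidimensional reduction is not substantiated and is in fact unnecessary once you make the Fubini observation; as written it leaves a real gap, since transversality is needed on each one-dimensional slice and nothing is said about how to produce comparable control in several parameter variables simultaneously.
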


 If all critical points are simple, then $\La_{d,\oli{p'}}$ is locally equal to $\RR_d$ (up to M\"obius conjugacy), and we immediately get the following corollary.

 \begin{Cor}
   Any slowly recurrent rational Collet--Eckmann map $f$ of degree $d \geq 2$ with only simple critical points, and for which the Julia set is not the entire sphere, is a Lebesgue density point of hyperbolic maps in $\RR_d$.
 \end{Cor}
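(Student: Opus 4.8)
The plan is to deduce the Corollary from Theorem~\ref{main1} by observing that, when all critical points of $f$ are simple, a neighbourhood of $f$ in $\RR_d$ is, after quotienting by M\"obius conjugacy, canonically identified with a neighbourhood of $f$ in $\La_{d,\oli{p'}}$ with $p' = 2d-2$ and $\oli{p'} = \{1,\dots,1\}$. First I would recall that a rational map of degree $d$ has, counted with multiplicity, exactly $2d-2$ critical points; if all of them are simple then $f$ has precisely $p' = 2d-2$ distinct critical points, so $f \in \La_{d,\oli{p'}}$ for this choice of $\oli{p'}$. Next, since simple critical points are non-degenerate zeros of the derivative, the implicit function theorem shows that for every $g$ in a sufficiently small neighbourhood $\UU$ of $f$ in $\RR_d$, the map $g$ still has $2d-2$ distinct simple critical points depending analytically on $g$; hence $\UU \subset \La_{d,\oli{p'}}$, i.e. the condition defining $\La_{d,\oli{p'}}$ is open and is satisfied on a full neighbourhood of $f$. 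In other words, near $f$ the stratum $\La_{d,\oli{p'}}$ is not a proper submanifold but coincides with all of $\RR_d$ (before taking the M\"obius quotient).

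The only genuine point to check is that passing to the quotient by M\"obius conjugacy does not change what it means to be a Lebesgue density point. The group $\mathrm{PSL}_2(\C)$ acting by conjugation on $\RR_d$ is a $3$-complex-dimensional Lie group acting (locally freely, away from the maps with extra symmetry, which form a lower-dimensional set and in particular do not contain a density-point obstruction) with local product structure: a neighbourhood of $f$ in $\RR_d$ is biholomorphic to $V \times W$, where $W$ is a local slice transverse to the conjugacy orbit and $V$ is an open set in $\mathrm{PSL}_2(\C)$, and $\La_{d,\oli{p'}}$ up to equivalence is modelled on $W$. Because hyperbolicity is a conjugacy-invariant property, the hyperbolic locus in $\RR_d$ near $f$ is exactly $V \times (H \cap W)$ where $H\cap W$ is the hyperbolic locus in the slice. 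Fubini, together with the mutual absolute continuity between Lebesgue measure and the relevant metrics already noted in the text, then gives that $f$ is a Lebesgue density point of the hyperbolic locus in $\RR_d$ if and only if it is one in $W \cong \La_{d,\oli{p'}}$. Theorem~\ref{main1} provides the latter, since $f$ is a slowly recurrent Collet--Eckmann map in $\La_{d,\oli{p'}}$ whose Julia set is not the whole sphere, and the conclusion follows.

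I expect the main (really the only) obstacle to be the bookkeeping around the M\"obius quotient: making precise the local slice/product decomposition and checking that ``Lebesgue density point'' is insensitive to this identification, including the harmless presence of maps with nontrivial automorphisms. Everything else — the count of critical points, their persistence and analytic dependence under small perturbation, and the fact that the defining condition of $\La_{d,\oli{p'}}$ is open at a map with only simple critical points — is immediate from the implicit function theorem, and the Collet--Eckmann and slow-recurrence hypotheses transfer verbatim since they are stated intrinsically for $f$ itself.
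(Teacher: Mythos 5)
Your proposal is correct and follows exactly the paper's (one-line) argument: when all critical points are simple they persist under perturbation, so $\La_{d,\oli{p'}}$ is locally all of $\RR_d$ up to M\"obius conjugacy, and Theorem~\ref{main1} transfers directly. You simply spell out the implicit-function-theorem and M\"obius-slice bookkeeping that the paper leaves implicit.
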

 
Note that if $f$ is Collet--Eckmann and $\FF(f)\neq\emptyset$, then the Fatou set $\FF(f)$ consists only of attracting cycles and the Julia set of $f$ has Lebesgue measure zero (and actually the Hausdorff dimension is strictly less than $2$) \cite{GS}.


\begin{Ack}
The third author gratefully acknowledges support from Vergstiftelsen.
\end{Ack}


 
 \section{Preliminaries}

We will consider one-dimensional complex analytic families in $\La_{d,\oli{p'}} \subset \RR_d$ and prove the corresponding main result in almost all such families (where ``almost all'' means almost all tangents in $\La_{d,\oli{p'}}$ in the sense of Levin, see Section \ref{phase-parameter}). The main result will then follow by Fubini's theorem.  Throughout the paper $\QQ = \QQ(\varepsilon)$ will denote a fixed one-dimensional parameter square with sidelength $\varepsilon$, centred around a slowly recurrent Collet--Eckmann map $f_0$. So $f_a$, $a \in \QQ$ is a one-dimensional analytic family of rational maps in $\La_{d, \oli{p'}}$. Let $C_0$ and $\gamma_0$ be the associated constant and exponent of $f_0$ appearing in Definition~\ref{CE} for the {\em starting map} $f_0$. We denote by $c_l(0)$ and $v_l(0)$ (or simply $c_l$ and $v_l$ if it is clear from the context) the critical points and critical values of $f_0$ in $\JJ(f_0)$. In other words, $c_l\in\jrit(f_0)$. The corresponding critical points and critical values for $f_a$ with $a\in\QQ$ are denoted by $c_l(a)$ and $v_l(a)$, respectively, and we see that $c_l(a)$ (and consequently $v_l(a)$) are analytic in $\QQ$ (since we are considering $\La_{d,\oli{p'}}$). For simplicity, we write $\jrit_0$ instead of $\jrit(f_0)$, while $\jrit_a$ denotes the set of $c_l(a)$, for $a\in\QQ$. (Note that we are not claiming that $c_l(a)$ lies in the Julia set $\JJ(f_a)$.) For a connected set $A\subset\QQ$, we let $\jrit_A$ denote the union of $\jrit_a$ over $a \in A$.

For $a\in \QQ$, we are going to study the evolution of the critical points $c_l(a)$, and for this we introduce the functions
\[
\xi_{n,l}(a)=f_{a}^{n}(c_l(a)) \fa n \geq 0.
\]


With $x \lesssim y$ (or $x \gtrsim y$) we will mean that there exists a constant $C > 0$ (not dependent on the dynamics) such that $x \leq C y$ (or $x \geq C y$). If both $x \lesssim y$ and $x \gtrsim y$ then we will write $x \sim y$.

Since $f_0$ satisfies the Collet--Eckmann condition, nearby parameters inherit expansion for some time, and therefore the image of the parameter square $\QQ$ will expand under $\xi_{n,l}$. Once the image of $\QQ$ gets close to $\jrit_{\QQ}$, the derivative will decrease (depending on the distance to $\jrit_{\QQ}$). To ensure that we still have good expansion after getting close to $\jrit_{\QQ}$, a local analysis is needed. Let $0< \Delta' < \Delta$ be two large numbers, and let 
\begin{align*}
\delta=e^{-\Delta}, &\quad \delta' = e^{-\Delta'},\\
U_l = D(c_l,\delta), &\quad U_l' = D(c_l,\delta'),
\end{align*}
and define 
\begin{equation}\label{critn}
U=\bigcup_{l} U_l \quad \text{and} \quad U' = \bigcup_l U_l'
\end{equation}
to be neighbourhoods of the critical points of $f_0$ belonging to $\JJ_0$. By continuity, one can choose $\varepsilon$ sufficiently small such that $U$ is also a neighbourhood of $c_l(a)$ for all parameters $a\in \QQ$. In fact, we want to have $\diam(U_l) \gg \diam(c_l(\QQ))$. We will frequently use local Taylor expansion
\[
f_a(z) = f_a(c_l) + B(z-c_l)^{d_l} + \OO\left((z-c_l)^{d_l+1}\right),
\] 
and $U'$ is chosen to be some fixed neighbourhood where first order Taylor expansions are sufficiently good around any $c_l$. Considering the multiplicity at critical points, we let $\hat{d} = \max_l d_l$. (Note that we assume that the critical points do not split under perturbation, i.e. $d_l = d_l(a)$ is constant for $a \in \QQ$.) The smaller neighbourhood $U$ should be thought of as a neighbourhood that could be as small as one likes to fit into the construction. Furthermore, in the section on large deviations, we will also make use of a smaller neighbourhood $U^2 = \cup_l U_l^2$, where $U_l^2 = D(c_l,\delta^2) \subset U_l$. By choosing $\varepsilon$ small enough we make sure that $U_l^2$ is still a neighbourhood of $\jrit_Q$.

As time evolves, we will discard parameters that come too close to $\jrit_a$. For this reason we define the \emph{basic approach rate assumption} (or simply the \emph{ basic assumption}) as follows.

\begin{Def}
Let $\alpha>0$ and $K>0$ be the constants from the slow recurrence condition (Definition~\ref{SL}). We say that $c_l(a)$ satisfies the basic assumption up to time $n$ with exponent $\alpha$, if
\begin{equation}\label{BA}
\dist(\xi_{k,l}(a), \jrit_a)\geq Ke^{-2\alpha k}~\text{for all~}~k\leq n. 
\end{equation}
\end{Def}

For our starting map $f_0$ which is assumed to be slowly recurrent, the basic assumption is, per definition, always satisfied for all $n$ and for all $l$ with exponent $\alpha$. By making the perturbations sufficiently small, i.e.,  choosing $\varepsilon$ small enough, each parameter $a\in\QQ$ will also satisfy the basic assumption up to some time. However, as the number $n$ of iterates grows, $\xi_{n,l}(\QQ)$ becomes a comparatively large set, so that we shall need to partition our parameter square $\QQ$ in the following way. Let $U$ be as defined in \eqref{critn}.

\begin{Def}[Partition element]\label{partition}
Let $S>0$ be given. A connected set $A\subset \QQ$ is called a partition element at time $n$ if the following holds for all $k\leq n$:
\begin{equation}
\diam\xi_{k,l}(A)\leq
\begin{cases}
~\,~\dfrac{\dist(\xi_{k,l}(A), \jrit_{\QQ})}{(\log \dist(\xi_{k,l}(A), \jrit_{\QQ}))^2} ~&\text{if}~\xi_{k,l}(A)\cap U\neq \emptyset,\\
\quad~\,~S ~&\text{if}~\xi_{k,l}(A)\cap U= \emptyset.
\end{cases}
\end{equation}
\end{Def}

For convenience, the partition elements are going to be squares in our situation, since we start with a square $\QQ$, but in principle this is not needed. 
The reason to make partitions according to the above rule is that we have distortion control of $\xi_{n,l}(a)$ for $a\in A$. So as time evolves, the partition gets finer. The constant $S$ appearing in the above definition is usually referred to as the \emph{large scale}, and we say that a partition element has \emph{escaped} when it reaches size $S$ under the action of the function $\xi_{n,l}$. 

Our main task in the paper is to show that almost all partition elements will reach the large scale within a bounded (but not necessarily uniform) amount of time. This is relatively easy if $\xi_{n,l}(A)$ never comes close to critical points (which is true if our starting map is, say, of Misiurewicz type). In our case, however, it can happen that $\xi_{n,l}(A)$ approaches critical points, since we are starting with a (slowly) recurrent map. Although the approach rate is controlled by the basic assumption, we may still lose derivative. To restore this loss, we shall use the ideas from \cite{BC1, BC2} (see also \cite{MA7} which is similar to the setting here).

The fundamental concepts for dealing with the above situations are the so-called \emph{bound periods} and \emph{free periods}. To define them, we first introduce the notion of \emph{returns} which can be defined for single parameters and also for partition elements. 

Recall that $U$ and $U'$ are defined in \eqref{critn}. For a partition element $A$  we say that $\xi_{n,l}(A)$ is a \emph{return} if $\xi_{n,l}(A)\cap U'\neq\emptyset$ or $\xi_{n,l}(A)\cap U\neq\emptyset$. We speak of a {\em pseudo-returns} if $\xi_{n,l}(A)$ is a return into $U'$ but not $U$.  For a parameter $a\in\QQ$, we say that $\xi_{n,l}(a)$ is a return if $\xi_{n,l}(a)\in U'$ or $\xi_{n,l}(a)\in U$.

\begin{Def}[Bound period for parameters]\label{pointbp}
Let $\alpha$ be as in the basic assumption \eqref{BA}. Let $\xi_{n,l}(a)\in U'_k$ be a return, where $U'_k$ is the component of $U'$ containing $c_k(0)$. The bound period for this return is defined as the indices $j>0$ such that the following holds:
\begin{equation}
\left\vert \xi_{n+j,l}(a)-\xi_{j,k}(a) \right\vert \leq e^{-\alpha j}\dist(\xi_{j,k}(a), \jrit_a).
\end{equation}
The largest number $p = p(a) >0$ for which the above inequality holds is called the length of the bound period.
\end{Def}

During the bound period, the growth of derivative is inherited from its early orbit, regardless of whether or not there are more returns during this period. Such returns are called bound returns. Because of the binding condition in the above definition, these returns are harmless. As soon as the bound period ends, we enter into a \emph{free period}, which means that this piece of orbit stays away from critical points. During the free period, derivative growth is guaranteed by the classical result of Ma\~n\'e (see the next section for a more precise statement). If $p$ is the length of the bound period, when $\xi_{n+p+L,l}(a) \in U'$ for the least possible $L > 0$, we speak of a {\em free return}. The number $L$ is the length of the free period. Since bound returns are harmless we only speak of returns, and thereby mean free returns.  

We will also need a corresponding notion of bound period for partition elements. To define this, let $A \subset \QQ$ be a partition element at time $n$. We say that a return $\xi_{\nu,l}(A)$, $\nu > n$, into $U$ is \emph{essential} if
\begin{equation}
\diam\xi_{\nu,l}(A) \geq \frac{1}{3} \dfrac{\dist(\xi_{\nu,l}(A), \jrit_{\QQ})}{(\log \dist(\xi_{\nu,l}(A), \jrit_{\QQ}))^2}.
\end{equation}
Otherwise, it is called an \emph{inessential return}. When an essential return occurs we will have to make partitions according to Definition~\ref{partition}. Because of strong bounds on distortion, we will see that $\xi_{n,l}$ is almost affine on each partition element $A$ and if $A$ is a perfect square then $\xi_{n,l}(A)$ is also almost a perfect square. If $A$ has side length $d$, simply partition $A$ into four subsquares of equal length. If all these four subsquares are partition elements according to Definition~\ref{partition}, we are done; the new partition is thereby defined. If a subsquare is not a partition element,  continue partitioning it into four new subsquares of equal length, and continue like this until all the new subsquares are partition elements. We get a collection of squares of sidelength of the form $2^{-k}d$, for some $k \geq 0$ (note that we can have different values of $k$). No partition is made at an inessential return. We can now define the bound period for partition elements.

\begin{Def}[Bound period for partition elements]\label{ptbp}
Let $A$ be a partition element at time $n$ and $\xi_{n,l}(A)$ an essential, inessential or pseudo- return to $U'_k$, the component of $U'$. The bound period for this return is defined as indices $j>0$ such that the following holds for all $a,b\in A$ and for all $z\in\xi_{n,l}(A)$:
\begin{equation}
\left\vert f_{a}^{j}(z)- \xi_{j,k}(b)\right\vert \leq e^{-\alpha j}\dist (\xi_{j,k}(b), \jrit_{b}).
\end{equation}
The largest number $p = p(A) >0$ for which the above inequality holds is called the length of the bound period.
\end{Def}

With the above notions, we will follow the parameter exclusion technique originated by M.~Benedicks and L.~Carleson \cite{BC1,BC2}. However, we have to deal with the situation caused by the presence of several critical points (again following an idea due to M. Benedicks). What can happen is that a critical orbit might get close to a critical point other than itself. In this case, to use induction we need to use the binding information of this latter critical point. To handle this we make the following definition. Let $\gamma_I>0$ be a constant to be defined later. 

\begin{Def}
Given $\gamma>0$, we say that a parameter $a$ belongs to $\EE_{n,l}(\gamma)$ if 
\begin{equation}\label{EE1}
\vert Df_{a}^{k}(v_l(a))\vert \geq C_0e^{\gamma k} \fa k\leq n-1,
\end{equation}
and
\begin{equation}\label{EE2}
\vert Df_{a}^{k}(v_j(a))\vert \geq C_0 e^{\gamma k} \fa k\leq 2\hat{d}\alpha n/\gamma_I,~\text{and all}~\,j\neq l.
\end{equation}
We say that a parameter $a$ belongs to $\BB_{n,l}$ if
\begin{equation}\label{BB1}
\dist(\xi_{k,l}(a), \jrit_a)\geq Ke^{-2\alpha k} \fa k\leq n-1,
\end{equation}
and
\begin{equation}\label{BB2}
\dist(\xi_{k,j}(a), \jrit_a)\geq Ke^{-2\alpha k} \fa k\leq 2\hat{d}\alpha n/\gamma_I,~\text{and all}~\,j\neq l.
\end{equation}
\end{Def}

\section{Lemmas}
In this section we present several lemmas on distortion and transversality. The transversality property says that phase and parameter derivatives can be compared if the phase derivative grows at a certain rate. In our new situation with recurrent critical points, this property is inherited from quite recent powerful results by G. Levin \cite{Levin-transv, Levin-book}. Together with a strong distortion lemma in the phase space (the main distortion lemma), we get strong control of the geometry of $\xi_{n,l}(A)$ on partition elements. 

 \subsection{Phase-parameter relations} \label{phase-parameter}
 
 \subsubsection{Transversality}
Using a result by G.~Levin we state a transversality result for Collet--Eckmann parameters, relating phase and parameter derivatives. In the following there is a notion of degenerate families of rational maps, following \cite{Levin-book, Levin-transv}. We consider one-dimensional complex families of rational maps in $\La_{d,\oli{p'}}$ through the starting map $f_0$ such that this family has a non-zero tangent at $f_0$, i.e. such that $f_a(z) = f_0(z) + au(z) + \OO(a^2)$, for some non-zero $u(z)$. For almost all directions of this tangent in the parameter space, it is shown that we have a certain transversality property (see \cite{Levin-transv}, Corollary 2.1, part (8)), namely that the limit
 \[
\lim\limits_{n \raw \infty} \frac{\xi_{n,l}'(0)}{(f_{0}^{n-1})'(f_{0}(c_l))} = L_l,
   \]
   exists and is different from $0$ and $\infty$. (With $\xi_{n,l}^\prime(a)$ we mean the parameter derivative of $f_a^n$ evaluated at $c_l(a)$, i.e. $\xi_{n,l}^\prime(a) = \partial_a f_a^n(c_l(a))$.) Families satisfying this condition are called {\em non-degenerate in the sense of Levin}. Based on this we get the following, see Proposition~4.1 in \cite{MA7}. 
 \begin{Lem} \label{levin}
 Let $f=f_0$ be a slowly recurrent Collet--Eckmann map with exponent $\gamma_0$ and $f_a$, $a \in \QQ$, an analytic non-degenerate family in the sense of Levin. Then for any $q\in (0,1)$ and any $\gamma \in (0,\gamma_0)$ there exists $N >0$ and $\varepsilon >0$ such that
 \begin{equation} 
 \left\vert \frac{\xi'_{n,l}(a)}{(f^{n-1}_{a})'(v_l(a))}-L_l \right \vert \leq q \vert L_l \vert
 \end{equation}
 provided that $f_a$ satisfies the Collet--Eckmann condition up to time $n\geq N$ with exponent $\gamma$ for all $a\in \QQ$. 
 \end{Lem}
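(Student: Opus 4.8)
The plan is to start from the transversality result of Levin quoted just above the statement: for a family non-degenerate in the sense of Levin, the ratios
\[
Q_{n,l}(a) = \frac{\xi'_{n,l}(a)}{(f^{n-1}_{a})'(v_l(a))}
\]
converge, as $n \raw \infty$, to $L_l \neq 0, \infty$ \emph{at the parameter $a = 0$}. What must be upgraded is (i) uniformity of this convergence over the whole square $\QQ = \QQ(\vep)$, for $\vep$ small, and (ii) an effective rate, so that a single $N$ works for all $a \in \QQ$ once the Collet--Eckmann condition with exponent $\gamma$ holds up to time $n$. The essential mechanism is that $Q_{n,l}(a)$ is (almost) a telescoping product: writing $D_k(a) = Df_a\bigl(\xi_{k,l}(a)\bigr)$ one has the standard identity (see e.g. the derivation in \cite{MA7}, Prop.~4.1)
\[
Q_{n,l}(a) = \xi'_{1,l}(a) + \sum_{k=1}^{n-1} \frac{\partial_a f_a\bigl(\xi_{k,l}(a)\bigr)}{(f_a^{k})'(v_l(a))},
\]
so that $Q_{n,l}(a)$ is a series whose $k$-th term is bounded, under the Collet--Eckmann condition $|(f_a^{k})'(v_l(a))| \geq C_0 e^{\gamma k}$, by $\const \cdot e^{-\gamma k}$ (the numerators being uniformly bounded on $\QQ$ by compactness and analyticity of the family). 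Hence the tail $\sum_{k \geq N}$ is bounded by $\const \cdot e^{-\gamma N}/(1 - e^{-\gamma})$, uniformly in $a \in \QQ$, and this is $< \tfrac{q}{3}|L_l|$ once $N = N(q,\gamma)$ is large enough.

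Next I would handle the finite part. For the partial sums $Q^{(N)}_{n,l}(a) = \xi'_{1,l}(a) + \sum_{k=1}^{N-1}(\cdots)$, each summand is holomorphic in $a$ on a neighbourhood of $\QQ$ (for $\vep$ small the orbit $\xi_{k,l}(a)$ avoids the poles of $f_a$ and of $\partial_a f_a$, $k \leq N$), so $Q^{(N)}_{n,l}$ is holomorphic and uniformly bounded there, and by continuity $|Q^{(N)}_{n,l}(a) - Q^{(N)}_{n,l}(0)| < \tfrac{q}{3}|L_l|$ for all $a \in \QQ$ once $\vep$ is small (depending on $N$, hence on $q, \gamma$). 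Finally $|Q^{(N)}_{n,l}(0) - L_l| \leq |Q^{(N)}_{n,l}(0) - Q_{n,l}(0)| + |Q_{n,l}(0) - L_l|$; the first term is the tail at $a = 0$, again $\leq \const \cdot e^{-\gamma N} < \tfrac{q}{6}|L_l|$, and the second tends to $0$ by Levin's theorem, so is $< \tfrac{q}{6}|L_l|$ once (enlarging $N$ if necessary) $n \geq N$. Combining the three estimates via the triangle inequality gives $|Q_{n,l}(a) - L_l| \leq q|L_l|$ for all $a \in \QQ$ and all $n \geq N$, which is the claim. One runs this argument for each of the finitely many critical indices $l$ and takes the worst $N$ and the smallest $\vep$.

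The main obstacle is not the series manipulation — that is routine given Collet--Eckmann — but making sure the bookkeeping is honest about \emph{which} constants depend on \emph{what}: $N$ is chosen first from $q$ and $\gamma$ alone (to kill the tails), and only afterwards is $\vep$ shrunk, depending on the now-fixed $N$, to control the holomorphic finite part on $\QQ$; crucially the hypothesis ``$f_a$ satisfies Collet--Eckmann up to time $n \geq N$ with exponent $\gamma$ for all $a \in \QQ$'' is exactly what licenses the uniform geometric bound $|(f_a^k)'(v_l(a))|^{-1} \leq C_0^{-1} e^{-\gamma k}$ on every term, for every $a$, simultaneously. A secondary technical point is verifying that the numerators $\partial_a f_a(\xi_{k,l}(a))$ and the initial term $\xi'_{1,l}(a)$ are genuinely uniformly bounded on a fixed neighbourhood of $\QQ$ independent of $n$; this follows from analyticity of the family in $\La_{d,\oli{p'}}$ together with the fact that, for $\vep$ small, returns to the critical neighbourhoods are already controlled, but it should be stated, since it is where non-degeneracy of the family (the tangent $u(z) \not\equiv 0$) and the ambient compactness enter. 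With these dependencies tracked the proof is a clean $\varepsilon/3$ argument built on top of Levin's transversality statement.
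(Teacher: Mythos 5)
Your argument is correct and, as far as one can tell, coincides with the method of Proposition~4.1 in [MA7], to which the paper simply refers for this lemma (the paper itself contains no proof). The key identity
\[
\frac{\xi'_{n,l}(a)}{(f_a^{n-1})'(v_l(a))}
 \;=\; \xi'_{1,l}(a) + \sum_{k=1}^{n-1}\frac{(\partial_a f_a)(\xi_{k,l}(a))}{(f_a^{k})'(v_l(a))}
\]
follows from the recursion $\xi'_{n,l}(a) = (\partial_a f_a)(\xi_{n-1,l}(a)) + Df_a(\xi_{n-1,l}(a))\,\xi'_{n-1,l}(a)$ (and $(f_a^n)'(c_l(a))=0$), and the three-term triangle-inequality split --- uniform geometric tail from Collet--Eckmann, continuity of the finite sum in $a$ for fixed $N$, and Levin's convergence at $a=0$ --- closes the argument with the dependencies $N = N(q,\gamma,\min_l|L_l|)$ chosen first, then $\vep = \vep(N)$, exactly as you flagged. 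Two cosmetic remarks: (i) your partial sum $Q^{(N)}_{n,l}$ does not depend on $n$ and should drop the subscript; (ii) on the sphere with the spherical derivative there are no ``poles'' to avoid --- $\sup_{z,a}|\partial_a f_a(z)|$ is finite by compactness alone --- and the nondegeneracy of the family enters only through $L_l\neq 0,\infty$ (which makes $q|L_l|$ a meaningful target), not in bounding the numerators.
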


Recall that our starting map $f_0$ satisfies the Collet--Eckmann condition with exponent $\gamma_0$.  With $\gamma_H$ being the exponent from Lemma~\ref{oel} below (see Remark~\ref{he}), we shall apply the above lemma for
\[
\gamma_L:=\frac{1}{6}\min\{\gamma_0, \gamma_H\}(1-\tau),
\]
where $\tau\in (0,1)$ is some constant to be determined later. This choice of $\gamma_L$ also dictates the choices of the corresponding $N$ and $\varepsilon$ in Lemma \ref{levin}, which we denote by $N_L$ and $\varepsilon_L$ correspondingly. We choose the size $\varepsilon$ of our domain of perturbation (i.e., the parameter square $\QQ$) to comply with Lemma~\ref{levin}, e.g. $\varepsilon < \varepsilon_L$. For later convenience, we also define 
\[\gamma_{I}:=2\gamma_L,\,~\, ~\text{~and~}~\gamma_B:=\frac{9}{2}\gamma_L.
\]
We thus have that $\gamma_B>\gamma_I>\gamma_L.$
 
\subsubsection{Weak parameter dependence}
The following lemma tells us that the dependence on parameter is weak as long as we have exponential growth of the derivative. As a matter of fact, the dependence is even weaker, as will be seen after the proof of the main distortion lemma.
 
\begin{Lem}\label{weak_parameter_lemma}
Let $N_L$ and $\gamma_L$ be as in Lemma~\ref{levin}, and let $\gamma_1 > (3/2)\gamma_L$. Suppose that $a,b \in \mathcal{Q}$. If $\varepsilon$ and $\delta$ are small enough, and if there is an integer $k_1 \geq 0$ such that
\begin{enumerate}
\item[i)] $\vert Df^n_a(v_l(a))\vert \geq C_1 e^{\gamma_1 n}$ for all $n \leq N_L + k_1$;
\item[ii)] for all $n \leq N_L + k_1$, if $\xi_{n,l}(a),\xi_{n,l}(b) \notin U$ then $\vert \xi_{n,l}(a) - \xi_{n,l}(b) \vert \leq S$, and otherwise if $\xi_{n,l}(a) \in U$ or $\xi_{n,l}(b) \in U$ then
\[
\vert \xi_{n,l}(a) - \xi_{n,l}(b) \vert \leq \frac{\dist(\xi_{n,l}(a'), \jrit_{a'})}{\left(\log\left(\dist(\xi_{n,l}(a'), \jrit_{a'})\right)\right)^2},
\]
with $a' \in \{a,b\}$ minimising $\dist(\xi_{n,l}(a'),\jrit_{a'})$;

\end{enumerate}
then there exists $Q > 1$ (arbitrarily close to $1$ if $N_L$ is large enough) such that for all $N_L \leq n \leq N_L+k_1$
\[
\vert \xi_n(a) - \xi_n(b) \vert \geq Q^{-(n-1)} \vert Df_a^{n-1}(v_l(a))\vert \vert a - b\vert.
\]
Moreover, for all $0 \leq j \leq n-N_L$
\[
\vert \xi_{n,l}(a) - \xi_{n,l}(b) \vert \sim_{Q^j} \vert Df_a^j(\xi_{{n-j},l}(a))\vert \vert \xi_{n-j,l}(a) - \xi_{n-j,l}(b)\vert.
\]
\end{Lem}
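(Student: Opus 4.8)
The plan is to establish both estimates together by induction on $n$, with the base case $n=N_L$ resting on the transversality Lemma~\ref{levin} and the inductive step on a telescoping identity controlled in the phase space. Throughout one exploits that the large scale $S$ is small, that $\delta$ (hence $\Delta$) is small and $\varepsilon$ is small --- in particular small enough that, $f_0$ being Collet--Eckmann, the initial orbit pieces $\xi_{1,l}(a),\dots,\xi_{N_L,l}(a)$ stay outside $U$ for every $a\in\QQ$ --- and that the slow recurrence exponent $\alpha$ (in force via the basic assumption, so that $\dist(\xi_{k,l}(a),\jrit_a)\gtrsim e^{-2\alpha k}$) is small compared with $\gamma_L/(\hat{d}-1)$. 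For the base case write $\xi_{N_L,l}(a)-\xi_{N_L,l}(b)=\int_{[b,a]}\xi'_{N_L,l}(t)\,\ud t$. Hypotheses i)--ii), together with $\gamma_1>(3/2)\gamma_L$ and the smallness of $\varepsilon$, imply that every $f_t$ with $t$ on the segment $[b,a]\subset\QQ$ still satisfies the Collet--Eckmann condition up to time $N_L+k_1$ with exponent $\gamma_L$, so Lemma~\ref{levin} applies along the segment and yields $\xi'_{N_L,l}(t)=L_l\,(f_t^{N_L-1})'(v_l(t))\,(1+r(t))$ with $|r(t)|\le q$. Since the first $N_L$ orbit steps avoid $U$, the factor $(f_t^{N_L-1})'(v_l(t))$ differs from $(f_a^{N_L-1})'(v_l(a))$ by a distortion bounded in terms of the fixed number $N_L$; hence the integrand stays in a cone of half-angle $<\pi/2$ about $L_l\,(f_a^{N_L-1})'(v_l(a))$ (in modulus and in argument), the integral cannot cancel, and $|\xi_{N_L,l}(a)-\xi_{N_L,l}(b)|\gtrsim|Df_a^{N_L-1}(v_l(a))|\,|a-b|$, with a constant absorbed into $Q^{-(N_L-1)}$ once $N_L$ is large (this is exactly where the largeness of $N_L$ is needed: the fixed constant has an $(N_L-1)$-th root tending to $1$).

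For the inductive step at level $m$, decompose
\[
\xi_{m,l}(a)-\xi_{m,l}(b)=\bigl(f_a(\xi_{m-1,l}(a))-f_a(\xi_{m-1,l}(b))\bigr)+\bigl(f_a(\xi_{m-1,l}(b))-f_b(\xi_{m-1,l}(b))\bigr)=:T_1+T_2.
\]
Writing $T_1=\int f_a'(w)\,\ud w$ along the phase segment from $\xi_{m-1,l}(b)$ to $\xi_{m-1,l}(a)$, hypothesis ii) controls the distortion of $f_a'$ on that segment: if $\xi_{m-1,l}(a)\notin U$ the segment has length $\le S$ and stays away from $\crit$, while if $\xi_{m-1,l}(a)\in U$ it is a fraction $\OO(\Delta^{-2})$ of $\dist(\xi_{m-1,l}(a),\jrit_a)=|\xi_{m-1,l}(a)-c_k(a)|$, near whose centre $f_a'(w)\sim(w-c_k(a))^{d_k-1}$; either way $T_1=f_a'(\xi_{m-1,l}(a))\bigl(\xi_{m-1,l}(a)-\xi_{m-1,l}(b)\bigr)\bigl(1+\OO(\max(S,\Delta^{-2}))\bigr)$. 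For $T_2=\int_{[b,a]}\partial_t f_t(\xi_{m-1,l}(b))\,\ud t$ one has $|T_2|\lesssim|a-b|$, since $\partial_t f_t$ is uniformly bounded on $\QQ$. By the basic assumption $|f_a'(\xi_{m-1,l}(a))|\gtrsim e^{-2(\hat{d}-1)\alpha(m-1)}$, and by the estimates already proved at levels below $m$, $|\xi_{m-1,l}(a)-\xi_{m-1,l}(b)|\gtrsim Q^{-(m-2)}|Df_a^{m-2}(v_l(a))|\,|a-b|\gtrsim e^{(\gamma_1-\log Q)(m-2)}|a-b|$; since $\gamma_1>(3/2)\gamma_L$ and $\alpha$ is small, $|T_2|/|T_1|\lesssim e^{-c(m-2)}$ for some $c>0$, which is $\ll1$ when $m\ge N_L$ with $N_L$ large. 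Therefore $\xi_{m,l}(a)-\xi_{m,l}(b)=f_a'(\xi_{m-1,l}(a))\bigl(\xi_{m-1,l}(a)-\xi_{m-1,l}(b)\bigr)(1+\zeta_{m-1})$ with $|\zeta_{m-1}|\lesssim\max(S,\Delta^{-2})+e^{-c(m-2)}$. Iterating this identity from $m=n$ down to $m=n-j+1$ multiplies the $j$ correction factors; with $S$ small, $\Delta$ large and $N_L$ large each of them lies in a prescribed small neighbourhood of $1$, so their product lies in $[Q^{-1},Q]\subset[Q^{-j},Q^j]$ --- this is the ``moreover'' statement (the case $j=0$ being trivial). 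Taking $j=n-N_L$, combining with the base estimate and the chain rule $|Df_a^{n-N_L}(\xi_{N_L,l}(a))|\cdot|Df_a^{N_L-1}(v_l(a))|=|Df_a^{n-1}(v_l(a))|$, gives the first inequality.

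The principal obstacle is the apparent circularity: applying Lemma~\ref{levin} along $[b,a]$ and controlling the distortions at level $m$ both presuppose the Collet--Eckmann condition with exponent $\gamma_L$ at the intermediate parameters up to time $m$, which is exactly the type of statement being proved. This is absorbed into the induction --- the level-$(m-1)$ output propagates the Collet--Eckmann condition along $[b,a]$ up to time $m$ before it is needed --- and is made possible by the slack $\gamma_1>(3/2)\gamma_L$, which leaves room for the per-step $Q$-loss while keeping the exponent above $\gamma_L$. A second point that needs care is that all estimates are for complex quantities: controlling moduli is not enough, one must also keep the arguments of the integrand of $\int\xi'_{n,l}$ and of the telescoping factors $1+\zeta_m$ within a half-plane, which is exactly where the smallness of $q$, of $S$, and of the per-return error $\OO(\Delta^{-2})$ is used.
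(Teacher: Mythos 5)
Your proof follows essentially the same route as the paper's: establish the base case at $n=N_L$ via Lemma~\ref{levin}, then induct using the decomposition of $\xi_{m,l}(a)-\xi_{m,l}(b)$ into a phase term $T_1=f_a(\xi_{m-1,l}(a))-f_a(\xi_{m-1,l}(b))$ and a parameter term $T_2=f_a(\xi_{m-1,l}(b))-f_b(\xi_{m-1,l}(b))$, with $T_2$ controlled via the induction hypothesis and hypothesis i). The one genuine difference is cosmetic but pleasant: you package the inductive step as a multiplicative identity $\xi_{m,l}(a)-\xi_{m,l}(b)=f_a'(\xi_{m-1,l}(a))(\xi_{m-1,l}(a)-\xi_{m-1,l}(b))(1+\zeta_{m-1})$ and iterate it, which delivers the first inequality and the ``moreover'' estimate $\sim_{Q^j}$ in one sweep, whereas the paper runs two separate inductions (and for the $\sim_{Q^j}$ part only sketches the upper-bound direction). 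Your explicit discussion of the apparent circularity is also a useful clarification that the paper leaves implicit.

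One small blemish worth fixing: when you bound $|T_2|/|T_1|$, you invoke the basic assumption to get a lower bound on $|f_a'(\xi_{m-1,l}(a))|$. The basic assumption is not among the hypotheses of the lemma and should not be used. It is also unnecessary: the chain rule gives $|f_a'(\xi_{m-1,l}(a))|\,|Df_a^{m-2}(v_l(a))|=|Df_a^{m-1}(v_l(a))|\geq C_1 e^{\gamma_1(m-1)}$ directly from hypothesis i), so combining that with the induction hypothesis $|\xi_{m-1,l}(a)-\xi_{m-1,l}(b)|\gtrsim Q^{-(m-2)}|Df_a^{m-2}(v_l(a))|\,|a-b|$ yields $|T_1|\gtrsim Q^{-(m-2)}e^{\gamma_1(m-1)}|a-b|$ without any appeal to recurrence. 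This is exactly how the paper argues, and it keeps the lemma self-contained in the hypotheses as stated.
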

\begin{proof}
We fix $l$ and write $\xi_n = \xi_{n,l}$, $N = N_L$. We begin with proving that there is a $Q > 1$ close to $1$ such that
\begin{equation}\label{endpoint_stretch}
\vert \xi_n(a) - \xi_n(b) \vert \geq Q^{-(n-1)} \vert Df_a^{n-1}(v_l(a)) \vert \vert a - b\vert 
\end{equation}
is true for all $N \leq n \leq N+k_1$.

By making $\varepsilon$ small enough we can make sure that
\[
\vert \xi_N(a) - \xi_N(b) \vert \geq \frac{1}{2}\vert \xi_N'(a) \vert \vert a - b\vert.
\]
Using Lemma~\ref{levin} we find that 
\begin{align*}
\vert \xi_N(a) - \xi_N(b) \vert &\geq \frac{1}{2}\vert L_l\vert (1-q) \vert Df_a^{N-1}(v_l(a)) \vert \vert a - b \vert \\
&\geq Q^{-(N-1)} \vert Df_a^{N-1}(v_l(a)) \vert \vert a - b\vert,
\end{align*}
where $Q > 1$ can be made arbitrarily close to $1$ by increasing $N$. Assume that the above inequality holds for some $N \leq n \leq N+k_1 - 1$. We have that
\begin{align*}
\vert \xi_{n+1}(a) - \xi_{n+1}(b) \vert &\geq \vert f_a(\xi_n(a)) - f_a(\xi_n(b)) \vert - \vert f_a(\xi_n(b)) - f_b(\xi_n(b)) \vert \\
&\geq Q_0^{-1} \vert Df_a(\xi_n(a)) \vert \vert \xi_n(a) - \xi_n(b)\vert - 2\vert \partial_a f_a(\xi_n(a)) \vert \vert a - b\vert \\
&\geq Q_0^{-1} Q^{-(n-1)} \left( \vert Df_a^n(v_l(a)) \vert - 2BQ_0 Q^{n-1} \right) \vert a - b\vert,
\end{align*}
where $B = \sup \vert \partial_a f_a \vert$ and $Q_0 > 1$ can be made arbitrarily close to $1$ by making $\epsilon_1$ small enough in $S = \delta \epsilon_1$, and $N$ large enough. 

From assumption i) since if $Q$ is such that $\log Q < \gamma_1/2$, say, then
\begin{equation}\label{pineq}
2BQ_0 Q^{n-1} \leq \frac{Q_0-1}{Q_0} C_1 e^{\gamma_1 n} \leq  \frac{Q_0-1}{Q_0} \vert Df_a^n(v_l(a)) \vert,
\end{equation}
for $N$ large enough. Combining this with with the above, taking $Q_0 = \sqrt Q$, we find that
\[
\vert \xi_{n+1}(a) - \xi_{n+1}(b) \vert \geq Q^{-n} \vert Df_a^n(v_l(a)) \vert \vert a - b \vert,
\]
proving the first conclusion of the lemma.

The proof of the second claim of the lemma is very similar to the proof of the first claim above. We use an inductive argument as follows. For $n = N$ (and thus $j=0$) the result is trivial. Suppose therefore that for some $N \leq n \leq N + k_1 - 1$ the conclusion in the statement of the lemma is true, and consider the case $n+1$. Pick some $0 \leq j \leq n - N$.  
%
%
Using \eqref{endpoint_stretch} we find that
\begin{align*}
\vert &\xi_{n+1}(a) - \xi_{n+1}(b) \vert \\ 
&\geq \vert f_a(\xi_{n}(a)) - f_a(\xi_{n}(b)) \vert - \vert f_a(\xi_{n}(b)) - f_b(\xi_{n}(b)) \vert \\
      &\geq Q_0^{-1} \vert Df_a(\xi_{n}(a)) \vert \vert \xi_{n}(a) - \xi_{n}(b) \vert -
        2\vert \partial_a f_a(\xi_{n}(a)) \vert \vert a - b \vert \\
      &\geq Q_0^{-1} \left(\vert Df_a(\xi_{n}(a)) \vert - \frac{2BQ_0 Q^{n-1}}{\vert Df_a^{n-1}(v_l(a)) \vert} \right) \vert \xi_{n}(a) - \xi_{n}(b) \vert.
\end{align*}
It follows from inequality \eqref{pineq} that
\[
\frac{2BQ_0 Q^{n-1}}{\vert Df_a^{n-1}(v_l(a)) \vert} \leq \frac{Q_0-1}{Q_0} \vert Df_a(\xi_{n}(a)) \vert.
\]
We continue now, using the induction assumption that the lemma is true for $n$, to conclude that, for $0 \leq j \leq n-N$, 
\begin{align}
\vert \xi_{n+1}(a) - \xi_{n+1}(b) \vert &\geq Q_0^{-2} \vert Df_a(\xi_{n}(a)) \vert \vert \xi_{n}(a) - \xi_{n}(b) \vert \\
             &\geq Q_0^{-2} \vert Df_a(\xi_{n}(a)) \vert Q^{-j} \vert Df_a^j(\xi_{n-j}(a)) \vert \vert \xi_{n-j}(a) - \xi_{n-j}(b) \vert \\
             &\geq Q_0^{-2} Q^{-j} \vert Df_a^{j+1}(\xi_{n-j}(a)) \vert \vert \xi_{n-j}(a) - \xi_{n-j}(b) \vert.
\end{align}
Choosing $Q_0 = \sqrt Q$ close enough to $1$, we get
\begin{align*}
\vert &\xi_{n+1}(a) - \xi_{n+1}(b) \vert \geq Q^{-(j+1)} \vert Df_a^{j+1}(\xi_{n-j}(a)) \vert \vert \xi_{n-j}(a) - \xi_{n-j}(b) \vert.
\end{align*}
The case $j=0$ in the second claim of the lemma is trivial. Hence, this proves one of the inequalities of the second claim. We can achieve the other inequality in a completely analogous way.
\end{proof}
With the above lemma we immediately get the following result, telling us that the analytic dependence of critical points on the parameters are negligible.

\begin{Lem}
Under the assumptions of Lemma~\ref{weak_parameter_lemma} there exists a constant $C > 0$ such that
\[
\vert \xi_{n,l}(a) - \xi_{n,l}(b) \vert \geq C e^{\gamma_2 n} \vert c_l(a) - c_l(b) \vert,
\]
for any $N_L \leq n \leq N_L + k_1$.
\end{Lem}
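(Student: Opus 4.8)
The plan is to combine the two conclusions of Lemma~\ref{weak_parameter_lemma} with the transversality estimate of Lemma~\ref{levin} in order to convert a lower bound on $\vert \xi_{n,l}(a) - \xi_{n,l}(b)\vert$ in terms of $\vert a-b\vert$ into one in terms of $\vert c_l(a)-c_l(b)\vert$. The key observation is that $c_l(a)$ depends analytically on $a$ inside $\La_{d,\oli{p'}}$, so $\vert c_l(a)-c_l(b)\vert \lesssim \vert a-b\vert$ with a constant depending only on the family; thus it suffices to produce the lower bound with $\vert a-b\vert$ on the right and absorb the analytic-dependence constant. That lower bound is essentially the first conclusion of Lemma~\ref{weak_parameter_lemma}, namely $\vert \xi_{n,l}(a)-\xi_{n,l}(b)\vert \geq Q^{-(n-1)}\vert Df_a^{n-1}(v_l(a))\vert\,\vert a-b\vert$ for $N_L \leq n \leq N_L+k_1$.

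First I would invoke assumption i) of Lemma~\ref{weak_parameter_lemma}, which gives $\vert Df_a^{n-1}(v_l(a))\vert \geq C_1 e^{\gamma_1(n-1)}$ with $\gamma_1 > (3/2)\gamma_L$. Feeding this into the first conclusion of Lemma~\ref{weak_parameter_lemma} yields
\[
\vert \xi_{n,l}(a) - \xi_{n,l}(b)\vert \geq C_1 Q^{-(n-1)} e^{\gamma_1(n-1)} \vert a-b\vert.
\]
Since $Q>1$ can be taken arbitrarily close to $1$ (by choosing $N_L$ large), we may assume $\log Q$ is small enough that $\gamma_1 - \log Q > \gamma_2$ for the desired exponent $\gamma_2$ (any $\gamma_2 < \gamma_1$ works, and in particular one can take $\gamma_2$ comparable to $\gamma_L$); then $Q^{-(n-1)} e^{\gamma_1(n-1)} \geq e^{\gamma_2(n-1)} \gtrsim e^{\gamma_2 n}$.

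Next I would handle the passage from $\vert a-b\vert$ to $\vert c_l(a)-c_l(b)\vert$. Because the family $a \mapsto f_a$ is analytic in $\QQ$ and lies in $\La_{d,\oli{p'}}$, the critical point $c_l(a)$ is an analytic function of $a$ on the compact square $\oli{\QQ}$, hence Lipschitz: $\vert c_l(a)-c_l(b)\vert \leq M\vert a-b\vert$ for a constant $M = M(f_0,\QQ) > 0$ not depending on the dynamics. Combining,
\[
\vert \xi_{n,l}(a)-\xi_{n,l}(b)\vert \gtrsim e^{\gamma_2 n}\vert a-b\vert \geq \frac{1}{M} e^{\gamma_2 n}\vert c_l(a)-c_l(b)\vert,
\]
which is the claim with $C$ absorbing the various constants $C_1$, $M$ and the implicit constant from $e^{\gamma_2(n-1)} \gtrsim e^{\gamma_2 n}$.

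I do not anticipate a serious obstacle here; the statement is essentially a corollary and the only mild subtlety is bookkeeping the exponents so that $\gamma_2$ is genuinely positive after paying the $Q^{-(n-1)}$ cost — but since $Q \to 1$ as $N_L \to \infty$ while $\gamma_1 > (3/2)\gamma_L$ is fixed, there is ample room. One should also note the degenerate case $c_l(a) = c_l(b)$ (the critical point locally constant along the family), in which the inequality is trivial; otherwise the Lipschitz bound on the analytic map $a \mapsto c_l(a)$ does all the work. The restriction $N_L \leq n \leq N_L + k_1$ is simply inherited from Lemma~\ref{weak_parameter_lemma} and needs no further comment.
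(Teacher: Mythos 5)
Your proposal is correct and matches the paper's argument in all essentials: both invoke the first conclusion of Lemma~\ref{weak_parameter_lemma} (the lower bound $\vert \xi_{n,l}(a)-\xi_{n,l}(b)\vert \geq Q^{-(n-1)}\vert Df_a^{n-1}(v_l(a))\vert\,\vert a-b\vert$), feed in assumption~i) to lower-bound the derivative, absorb $Q^{-(n-1)}$ into a slightly degraded exponent $\gamma_2 < \gamma_1$, and then use analyticity of $a\mapsto c_l(a)$ to control $\vert c_l(a)-c_l(b)\vert$ by $\vert a-b\vert$. The only cosmetic difference is that you appeal to a uniform Lipschitz bound for the analytic map $a\mapsto c_l(a)$ on the compact square, whereas the paper writes out a local Taylor expansion $c_l(a)=c_l(b)+K_l(a-b)^{k_l}+\OO((a-b)^{k_l+1})$ to handle potential higher-order vanishing — but both give $\vert c_l(a)-c_l(b)\vert\lesssim\vert a-b\vert$, which is all that is needed.
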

\begin{proof}
Since we assume that the critical points $c_l(a)$ move analytically in $a$ we have 
\[
c_l(a) = c_l(b) + K_l (a - b)^{k_l} + \OO\left( ( a - b)^{k_l+1} \right).
\]
From \eqref{endpoint_stretch} and the the conclusion of Lemma~\ref{weak_parameter_lemma} we find that
\begin{align*}
\vert c_l(a) - c_l(b) \vert &\leq 2\vert K_l \vert k_l \vert a - b \vert^{k_l-1} \vert a - b\vert \\
&\lesssim \frac{Q^n}{\vert Df_a^n(v_l(a)) \vert} \vert \xi_n(a) - \xi_n(b) \vert \\
&\lesssim e^{-\gamma_2 n} \vert \xi_n(a) - \xi_n(b) \vert,
\end{align*}
where $\gamma_2$ is slightly smaller than $\gamma_1$.
\end{proof}
With these lemmas we can neglect the parameter dependence on each partition element $A$. In particular, for returns into $U'$, $\dist(\xi_{n,l}(A),c_l(A))$ is very close to $\vert \xi_{n,l}(a) - c_l(a)\vert$ for all $a \in A$, so we can almost view $c_l(A)$ as one single critical point.

\subsubsection{Distortion estimate} In the sequel we will frequently use the following distortion estimate, which we for convenience formulate as a lemma.
\begin{Lem}\label{dist_esti_0}
If $z$ and $w$ stay sufficiently close to each other under iteration by $f_a$ up to time $n$, then
\begin{equation}\label{dist_esti_1}
\left\vert \frac{Df_a^n(z)}{Df_a^n(w)} - 1 \right\vert \leq \exp \left(C \sum_{j=0}^{n-1} \frac{\vert f_a^j(z) - f_a^j(w) \vert}{\dist(f_a^j(w), \jrit_a)} \right) -1,
\end{equation}
for some constant $C > 0$ dependent on $f_0$ and $\varepsilon$.
Moreover, if also $z = \xi_{\nu}(a)$ and $w = \xi_{\nu}(b)$, with $a,b \in \mathcal{Q}$, and if the assumptions of Lemma~\ref{weak_parameter_lemma} are satisfied, then 
\begin{equation}\label{dist_esti_2}
\left\vert \frac{Df_a^n(\xi_{\nu}(a))}{Df_b^n(\xi_{\nu}(b))} - 1 \right\vert \leq \exp \left(C \sum_{j=0}^{n-1} \frac{\vert \xi_{\nu + j}(a) - \xi_{\nu+j}(b) \vert}{\dist(\xi_{\nu+j}(b), \jrit_b)} \right) -1.
\end{equation}
\end{Lem}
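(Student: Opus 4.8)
The plan is to reduce the whole estimate to a one-step inequality, using the chain rule for the (spherical) derivative, $\lvert Df_a^n(z)\rvert=\prod_{j=0}^{n-1}\lvert Df_a(f_a^j(z))\rvert$, together with the elementary bound $\lvert\prod_j\zeta_j-1\rvert\le\prod_j(1+\lvert\zeta_j-1\rvert)-1\le\exp\bigl(\sum_j\lvert\zeta_j-1\rvert\bigr)-1$, valid for any finite family of complex numbers (a one-line induction). Before doing so it is convenient to normalise: since $\JJ(f_0)\neq\hat{\C}$, after a M\"obius conjugacy we may assume $\infty\in\FF(f_0)$, and then for $\varepsilon$ small every map $f_a$, $a\in\QQ$, has $\infty$ in its Fatou set, all the orbits occurring in the statement stay in a fixed compact set $K_0\subset\C$ on which the spherical and Euclidean metrics are comparable, and the poles of $f_a$ together with the critical points of $f_a$ lying in $\FF(f_a)$ stay at a definite Euclidean distance from $K_0$. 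In particular $\dist(\,\cdot\,,\jrit_a)$ is bounded on $K_0$ and, for $z\in K_0$, $\dist(z,\crit(f_a))$ and $\dist(z,\jrit_a)$ are comparable uniformly in $a$.

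The core step is a one-step estimate: there is a constant $C>0$, depending only on $f_0$ and $\varepsilon$, such that
\begin{equation}\label{one-step}
\left|\frac{Df_a(u)}{Df_a(v)}-1\right|\le C\,\frac{\lvert u-v\rvert}{\dist(v,\jrit_a)}
\end{equation}
for all $u,v\in K_0$ with $\lvert u-v\rvert\le\tfrac12\dist(v,\jrit_a)$. To prove this I would write $\log\lvert Df_a(z)\rvert$ as $\log\lvert f_a'(z)\rvert$ plus a smooth, uniformly bounded term coming from the conformal factors of the spherical metric; the Euclidean gradient of the first piece has modulus $\lvert f_a''(z)/f_a'(z)\rvert$, and $f_a''/f_a'$ is a rational function whose only poles are the critical points of $f_a$, each simple and with residue $d_l-1\le\hat{d}-1$. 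Since the poles of $f_a$ and the critical points of $f_a$ in $\FF(f_a)$ are bounded away from $K_0$, this yields $\lvert f_a''(\zeta)/f_a'(\zeta)\rvert\le C/\dist(\zeta,\jrit_a)$ on $K_0$. Integrating along the segment $[v,u]$, on which $\dist(\,\cdot\,,\jrit_a)\ge\tfrac12\dist(v,\jrit_a)$, gives $\lvert\log(Df_a(u)/Df_a(v))\rvert\le 2C\lvert u-v\rvert/\dist(v,\jrit_a)\le C$, and \eqref{one-step} follows from $\lvert e^w-1\rvert\le\lvert w\rvert e^{\lvert w\rvert}$.

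For \eqref{dist_esti_1} I would then apply the chain rule with $\zeta_j=Df_a(f_a^j(z))/Df_a(f_a^j(w))$ and bound each $\lvert\zeta_j-1\rvert$ by \eqref{one-step}; the hypothesis that $z$ and $w$ stay sufficiently close under iteration up to time $n$ is precisely the requirement $\lvert f_a^j(z)-f_a^j(w)\rvert\le\tfrac12\dist(f_a^j(w),\jrit_a)$ for $j<n$, which makes \eqref{one-step} applicable at every step, and summing and exponentiating gives the claim. For \eqref{dist_esti_2} the factors are $\zeta_j=Df_a(\xi_{\nu+j}(a))/Df_b(\xi_{\nu+j}(b))$, which I split as $\dfrac{Df_a(\xi_{\nu+j}(a))}{Df_a(\xi_{\nu+j}(b))}\cdot\dfrac{Df_a(\xi_{\nu+j}(b))}{Df_b(\xi_{\nu+j}(b))}$. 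The first factor is handled by \eqref{one-step}, noting that $c_l(a)$ and $c_l(b)$ differ by an amount negligible against $\dist(\xi_{\nu+j}(b),\jrit_b)$ under the assumptions of Lemma~\ref{weak_parameter_lemma} and the lemma following it, so that $\dist(\xi_{\nu+j}(b),\jrit_a)\sim\dist(\xi_{\nu+j}(b),\jrit_b)$. For the second factor, a local Taylor expansion of $f_a$ and $f_b$ around $c_l(a)$, $c_l(b)$ gives $\lvert Df_a(w)/Df_b(w)-1\rvert\le C\lvert a-b\rvert/\dist(w,\jrit_b)$; and since, under the assumptions of Lemma~\ref{weak_parameter_lemma}, $\lvert a-b\rvert\lesssim\lvert\xi_{\nu+j}(a)-\xi_{\nu+j}(b)\rvert$, this factor too is $\lesssim\lvert\xi_{\nu+j}(a)-\xi_{\nu+j}(b)\rvert/\dist(\xi_{\nu+j}(b),\jrit_b)$. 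Summing over $j$ and exponentiating via the elementary bound then yields \eqref{dist_esti_2}.

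The part I expect to require the most care is bookkeeping rather than anything conceptual: making the residue bound for $f_a''/f_a'$ and the comparison $\dist(\,\cdot\,,\crit(f_a))\sim\dist(\,\cdot\,,\jrit_a)$ uniform in $a\in\QQ$, and, for \eqref{dist_esti_2}, verifying that $\lvert a-b\rvert$ is genuinely dominated by $\lvert\xi_{\nu+j}(a)-\xi_{\nu+j}(b)\rvert$ over the entire relevant range of indices $\nu+j$ — this is exactly where Lemma~\ref{weak_parameter_lemma} and the subsequent lemma enter, and one has to confirm that their hypotheses are in force for all the indices that appear.
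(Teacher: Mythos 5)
Your proposal is correct and follows essentially the same strategy as the paper: both start from the chain rule combined with the elementary bound $\bigl|\prod_j \zeta_j - 1\bigr| \le \exp\bigl(\sum_j |\zeta_j - 1|\bigr) - 1$, reduce to a one-step estimate $\bigl|Df_a(u)/Df_a(v) - 1\bigr| \lesssim |u-v|/\dist(v,\jrit_a)$, and for \eqref{dist_esti_2} split off the parameter-dependence term and absorb $|a-b|$ via Lemma~\ref{weak_parameter_lemma}. The only stylistic difference is in deriving the one-step bound: the paper does a case analysis (local Taylor expansion inside $U'$, sup/inf bounds outside), while you package the same content more compactly via the residue structure of $f_a''/f_a'$ together with the bounded-gradient correction for the spherical conformal factor — equivalent in substance, so this counts as the same route.
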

\begin{proof}
Given any complex numbers $z_1,\dots, z_n$, the following inequality is standard
\[
\left\vert \prod_{j = 1}^n z_j - 1 \right\vert \leq \exp\left(\sum_{j=1}^n \vert z_j - 1\vert\right) - 1,
\]
and therefore, using the chain rule, we conclude that
\[
\left\vert \frac{Df_a^n(z)}{Df_b^n(w)} - 1 \right\vert \leq \exp\left( \sum_{j=0}^{n-1} \frac{\vert Df_a(f_a^j(z)) - Df_b(f_b^j(w)) \vert}{\vert Df_b(f_b^j(w))\vert} \right) - 1.
\]
We begin to prove \eqref{dist_esti_1}; \eqref{dist_esti_2} will then follow from Lemma~\ref{weak_parameter_lemma}.

Let us write $z_j = f_a^j(z)$ and $w_j = f_a^j(w)$. If one (or both) of $z_j$ and $w_j$ does not belong to $U'$, we are away from critical points and the estimate follows easily. Indeed we have that
\[
\vert Df_a(z_j) - Df_a(w_j) \vert \leq 2 \sup_{z \in \widehat{\mathbb{C}}} \vert D^2 f_a(z) \vert \vert z_j - w_j\vert,
\]
and also
\[
\vert Df_a(w_j) \vert \geq \frac{1}{2} \frac{\inf_{z \notin U'} \vert Df_a(z) \vert}{\sup_{z \notin U'} \dist(z,\jrit_a)} \dist(w_j,\jrit_a).
\]
If both $z_j$ and $w_j$ belong to $U'_l \subset U'$, using local Taylor expansion
\[
f_a(x) = f(c_l) + B(x-c_l)^{d_l} + \OO\left((x-c_l)^{d_l+1}\right),
\]
and the fact that $\vert Df(z_j) - Df(w_j) \vert \leq \vert D^2f(x) \vert \vert z_j - w_j\vert$ for some $x \in [z_j,w_j]$ we find that
\begin{align*}
\vert Df_a(z_j) - Df_a(w_j) \vert &\leq d_l^2 \vert B\vert \vert x - c\vert^{d_l-2} \vert 1 + \OO\left((x-c)\right) \vert \vert z_j - w_j \vert \\
&\leq 2 d_l^2 \vert B \vert \vert w - c\vert^{d_l-2} \vert z_j - w_j \vert,
\end{align*}
where we used that $\vert z_j - w_j \vert$ and $\delta'$ is very small.

For the derivative we have the estimate
\[
\vert Df_a(w_j) \vert = \vert Bd_l(w_j-c_l)^{d_l-1} + \OO\left((w_j-c_l)^{d_l}\right) \vert \geq \frac{1}{2}d_l \vert B\vert \vert w_j-c \vert^{d_l-1}.
\]
We conclude that
\[
\sum_{j=0}^{n-1} \frac{\vert Df_a(f_a^j(z)) - Df_a(f_a^j(w)) \vert}{\vert Df_a(w) \vert} \lesssim \sum_{j=0}^{n-1} \frac{\vert f_a^j(z) - f_a^j(w) \vert}{\dist(w,\jrit_a)}.
\]

To prove \eqref{dist_esti_2} we use the previous discussion together with Lemma~\ref{weak_parameter_lemma} to conclude that
\begin{align*}
\vert &Df_a(\xi_{\nu+j}(a)) - Df_b(\xi_{\nu+j}(b)) \vert \\
&\leq \vert Df_a(\xi_{\nu+j}(a)) - Df_a(\xi_{\nu+j}(b)) \vert + \vert Df_a(\xi_{\nu+j}(b)) - Df_b(\xi_{\nu+j}(b)) \vert \\
&\lesssim \vert \xi_{\nu+j}(a) - \xi_{\nu+j}(b) \vert + 2 \vert \partial_a Df_a(\xi_{\nu+j}(a))\vert \vert a - b\vert \\
&\lesssim \left(1+  \frac{2 \vert \partial_a Df_a(\xi_{\nu+j}(a))\vert Q^{\nu+j-1}}{\vert Df_a^{\nu+j-1}(v(a))\vert}  \right) \vert \xi_{\nu+j}(a) - \xi_{\nu+j}(b) \vert \\
&\lesssim \vert \xi_{\nu+j}(a) - \xi_{\nu+j}(b) \vert.
\end{align*}
\end{proof}

 \subsection{Expansion during bound periods}

We study in this section how an orbit preserves certain expansion during bound periods (although some loss is unavoidable at returns). To achieve this, we first need some distortion control during these periods. Recall that $U'$ is defined in \eqref{critn}.

\begin{Lem}\label{disbound}
Let $\varepsilon'>0$. Let $\delta'>0$ be sufficiently small and $N$ sufficiently large. Let also $\gamma\geq \gamma_I$. Suppose that $\xi_{\nu,l}(a)$ is a free return to $U'_{i}$ with $a\in\EE_{\nu,l}(\gamma)\cap \BB_{\nu,l}$. Then we have
\begin{equation}
\left\vert \frac{Df_{a}^{j}(\xi_{\nu+1,l}(a))}{Df_{a}^{j}(\xi_{1,i}(a))} - 1 \right\vert \leq\varepsilon'
\end{equation}
for all $j\leq p$, where $p$ is the length of the bound period.
\end{Lem}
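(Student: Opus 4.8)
The plan is to apply the distortion estimate of Lemma~\ref{dist_esti_0}, in the form \eqref{dist_esti_1}, to the two orbits $z = \xi_{\nu+1,l}(a)$ and $w = \xi_{1,i}(a)$, both iterated under $f_a$, for $j$ steps with $j \leq p$. By that lemma it suffices to bound
\[
\sum_{k=0}^{j-1} \frac{\vert f_a^k(\xi_{\nu+1,l}(a)) - f_a^k(\xi_{1,i}(a)) \vert}{\dist(f_a^k(\xi_{1,i}(a)), \jrit_a)} = \sum_{k=0}^{j-1} \frac{\vert \xi_{\nu+1+k,l}(a) - \xi_{1+k,i}(a) \vert}{\dist(\xi_{1+k,i}(a), \jrit_a)}
\]
by a small constant, so that $\exp(C\cdot(\text{small})) - 1 \leq \varepsilon'$. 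The numerator is exactly what the bound period condition (Definition~\ref{pointbp}) controls: for $k$ in the bound period one has $\vert \xi_{\nu+1+k,l}(a) - \xi_{1+k,i}(a) \vert \leq e^{-\alpha k}\dist(\xi_{1+k,i}(a), \jrit_a)$ (after reindexing the return time by one, i.e. with the binding condition written for the orbit starting at the critical value). Hence each term of the sum is bounded by $e^{-\alpha k}$, and the whole sum by $\sum_{k\geq 0} e^{-\alpha k} = (1-e^{-\alpha})^{-1}$, which is a fixed constant independent of $j$ and of the dynamics. This alone, however, is not small; so the real point is that by taking $\delta'$ small (equivalently $\Delta'$ large) and $N$ large we gain an extra small factor.

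The mechanism for the extra smallness is as follows. The first return $\xi_{\nu,l}(a)$ lands in $U'_i = D(c_i,\delta')$, so $\vert \xi_{\nu,l}(a) - c_i \vert \leq \delta'$, and by the Taylor expansion $f_a(z) = f_a(c_i) + B(z-c_i)^{d_i} + \OO((z-c_i)^{d_i+1})$ we get $\vert \xi_{\nu+1,l}(a) - v_i(a) \vert \lesssim (\delta')^{d_i} \leq (\delta')^2$, so the two orbits start extremely close compared to $\delta'$ itself. Feeding this into the bound-period inequality, the displacement at step $k$ is in fact bounded by something like $(\delta')\cdot e^{-\alpha k}\dist(\xi_{1+k,i}(a),\jrit_a)$ — one keeps track of the initial gain through the binding definition — so the sum above is bounded by $C\delta' \sum_k e^{-\alpha k} = C'\delta'$, which can be made $\leq \varepsilon'/(2C)$ by choosing $\delta'$ small. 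The hypotheses $a \in \EE_{\nu,l}(\gamma) \cap \BB_{\nu,l}$ with $\gamma \geq \gamma_I$ are what guarantee, via Collet--Eckmann type growth along $v_i(a)$ and the lower bound $\dist(\xi_{1+k,i}(a),\jrit_a)\geq Ke^{-2\alpha k}$, that the bound period length $p$ is comparable to a fixed multiple of the return depth and that the binding inequality in Definition~\ref{pointbp} genuinely propagates the initial smallness; this is where one must check that $2\alpha$ is small relative to $\gamma$ (which holds since $\alpha$ can be taken arbitrarily small by slow recurrence while $\gamma \geq \gamma_I = 2\gamma_L$ is fixed).

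The main obstacle I expect is bookkeeping at the returns \emph{inside} the bound period (the "bound returns"). When $\xi_{1+k,i}(a)$ itself re-enters $U'$, the denominator $\dist(\xi_{1+k,i}(a),\jrit_a)$ becomes small, and one must verify that the numerator shrinks at least as fast so that the ratio stays $\lesssim e^{-\alpha k}$; this is precisely the content of the binding condition, but one has to confirm it is not circular — the bound period of the inner return is controlled by the basic assumption \eqref{BB2} (that is why $\BB_{\nu,l}$ includes the condition for $j \neq l$ up to time $2\hat d \alpha n/\gamma_I$) and by the Collet--Eckmann growth \eqref{EE2} for those other critical values. So the argument is: split the sum over $k$ into free stretches and bound returns; on free stretches use Ma\~n\'e expansion together with $\dist(\cdot,\jrit_a)\gtrsim \delta'$; on bound returns invoke the binding inequality for the inner critical orbit, which is legitimate because the relevant exponential estimates hold by membership in $\EE_{\nu,l}(\gamma)\cap\BB_{\nu,l}$. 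Summing all contributions still yields $C'\delta'$, and the lemma follows by choosing $\delta'$ small and $N$ large. Finally one notes the estimate is uniform in $a \in \QQ$ because all constants ($C$, $B$, the $d_i$, $\alpha$, $K$) are, by the choices in Section~\ref{phase-parameter}, independent of the particular parameter.
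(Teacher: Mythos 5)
Your setup is correct (apply Lemma~\ref{dist_esti_0} to the two orbits and reduce to showing the distortion sum is small), and you correctly identify the source of the extra smallness, namely that $|\xi_{\nu+1,l}(a)-\xi_{1,i}(a)|\lesssim e^{-d_i r}$ with $e^{-r}\leq\delta'$. But the step where you claim that ``the displacement at step $k$ is in fact bounded by something like $(\delta')\cdot e^{-\alpha k}\dist(\xi_{1+k,i}(a),\jrit_a)$ --- one keeps track of the initial gain through the binding definition'' is not justified. Definition~\ref{pointbp} gives only $|\xi_{\nu+j,l}(a)-\xi_{j,i}(a)|\leq e^{-\alpha j}\dist(\xi_{j,i}(a),\jrit_a)$; there is no extra factor $\delta'$ in that inequality. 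The binding condition is an a priori cap, not a recursion that remembers the starting displacement, and indeed for $j$ of order $d_i r/\Gamma$ the naive expansion $e^{\Gamma j}\cdot e^{-d_i r}$ has already eaten the initial gain completely. So your sum, as actually controlled by the binding definition, is bounded only by $\sum_j e^{-\alpha j}=(1-e^{-\alpha})^{-1}$, which (as you note yourself) is not small. Your proposal never closes this gap.

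The paper's fix, which is genuinely different from what you sketch, is to split the sum at a carefully chosen index $J=d_i r/(10(\Gamma+2\alpha))$. For $j\leq J$ one does \emph{not} use the binding inequality at all: one uses the crude bound $|\xi_{\nu+j,l}(a)-\xi_{j,i}(a)|\leq e^{\Gamma(j-1)}|\xi_{\nu+1,l}(a)-\xi_{1,i}(a)|\lesssim e^{\Gamma(j-1)}e^{-d_ir}$, together with the basic-assumption lower bound $\dist(\xi_{j,i}(a),\jrit_a)\geq Ke^{-2\alpha j}$ (this is also exactly how the paper handles the ``bound returns inside the bound period'' that worry you --- the basic assumption gives a direct lower bound on the denominator; no recursion into the inner orbit's own binding structure is needed). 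For $j>J$ one uses the binding bound $e^{-\alpha j}$, whose tail sum starting at $J+1$ is of order $e^{-\alpha J}$, and this is small precisely because $J\gtrsim r\geq\Delta'$. Both pieces come out of order $e^{-c\Delta'}$ for a positive constant $c$, and the lemma follows by taking $\Delta'$ large. Your alternative decomposition into free stretches and bound returns does not by itself introduce a small factor (every term still only gets $e^{-\alpha j}$ from binding), so even if fully fleshed out it would not recover the estimate without some version of this split at $J$.
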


\begin{proof}
Since $\xi_{\nu,l}(a)$ is a free return to $U'_i$, we can thus assume that for some $r>0$
\[
\vert\xi_{\nu,l}(a)-c_i(a)\vert \sim e^{-r}.
\]
By Lemma~\ref{dist_esti_0}, it suffices to prove that the following sum can be made sufficiently small
\begin{equation}
\sum_{j=1}^{p}\frac{\vert Df_a(\xi_{\nu+j,l}(a))-Df_{a}(\xi_{j,i}(a))\vert }{\vert Df_a(\xi_{j,i}(a))\vert }\leq C_1\sum_{j=1}^{p}\frac{\vert \xi_{\nu+j,l}(a)-\xi_{j,i}(a)\vert}{\dist(\xi_{j,i}(a), \jrit_a)},
\end{equation}
where $C_1>0$ is some constant. Assume that $d_i$ is the local degree of $f_0$ at $c_i(0)$. So we have, for some constant $C_2>0$,
\begin{equation}\label{local}
\vert \xi_{\nu+1,l}(a)-\xi_{1,i}(a)\vert \leq C_2 e^{-d_i r}.
\end{equation}
Put $J=d_i r/10(\Gamma+2\alpha)$, where $\Gamma = \sup\limits_{z \in \hat{\C}, a\in \QQ} \log \vert f_a(z) \vert$. We can divide the above sum into two parts $[1,J]$ and $[J+1,p]$, and estimate them separately.

For the first sum, we have 
\[
\vert \xi_{\nu+j,l}(a)-\xi_{j,i}(a)\vert\leq e^{\Gamma(j-1)}\vert \xi_{\nu+1,l}(a)-\xi_{1,i}(a)\vert.
\]
Therefore, combining with the basic assumption \eqref{BA} and \eqref{local} we have
\begin{align*}
\sum_{j=1}^{J}\frac{\vert \xi_{\nu+j,l}(a)-\xi_{j,i}(a)\vert}{\dist(\xi_{j,i}(a), \jrit_a)}&\leq\sum_{j=1}^{J}\frac{e^{\Gamma(j-1)}\vert \xi_{\nu+1,l}(a)-\xi_{1,i}(a)\vert}{Ke^{-2\alpha j}}\\
&\leq C_3 e^{-9d_ir/10}\leq C_3 e^{-9d_i\Delta'/10}.
\end{align*}
Here $C_3$ depends only on $C_2$ and $K$.

For the second sum we can use Definition \ref{ptbp} directly to see that
\begin{align*}
\sum_{j=J+1}^{p}\frac{\vert \xi_{\nu+j,l}(a)-\xi_{j,i}(a)\vert }{\dist(\xi_{j,i}(a), \jrit_a)}&\leq  \sum_{j=J+1}^{p}e^{-\alpha j}\leq C_4 e^{-\alpha d_i r/(\Gamma+2\alpha)}\leq C_4 e^{-\alpha d_i \Delta'/(\Gamma+2\alpha)}
\end{align*}
for some constant $C_4>0$. As both of the above sums can be made sufficiently small by choosing $\Delta'$ large enough (i.e., $\delta'$ sufficiently small), we reach the conclusion.
\end{proof}

 \begin{Lem}[Expansion and lengths for bound periods]\label{length}
 Let $\gamma\geq \gamma_I$ and $a\in\EE_{\nu,l}(\gamma)\cap \BB_{\nu,l}$, where $\nu \geq N$ ($N$ as in Lemma \ref{levin}). Assume that $\xi_{\nu,l}(a)$ is a return to $U'_i$ whose length of bound period is $p$. Then if $N$ is sufficiently large we have
 \[
 \left \vert Df_{a}^{p+1}(\xi_{\nu,l}(a)) \right \vert \geq e^{\gamma p/(2d_i)},
 \]
 where $d_i$ is the degree of $f$ at $c_i$.
 
 Moreover, if $\dist(\xi_{\nu,l}(a), \jrit_a)\sim_{\sqrt{e}} e^{-r}$, then
 \[
 \frac{d_i r}{2\Gamma}\leq p\leq \frac{2d_i r}{\gamma}.
 \]
 In particular, $p\leq {2\alpha d_i\nu}/{\gamma}$.
 \end{Lem}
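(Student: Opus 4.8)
The plan is to follow the standard Benedicks--Carleson bound-period argument, using the distortion control from Lemma~\ref{disbound} to transfer expansion from the critical orbit of $c_i$ to the bound orbit, together with the Collet--Eckmann lower bound encoded in $a \in \EE_{\nu,l}(\gamma)$ and the basic assumption encoded in $a \in \BB_{\nu,l}$. First I would write $\dist(\xi_{\nu,l}(a),\jrit_a) \sim e^{-r}$, so that by the local Taylor expansion $f_a(z) = f_a(c_i) + B(z-c_i)^{d_i} + \OO((z-c_i)^{d_i+1})$ we get $|Df_a(\xi_{\nu,l}(a))| \sim e^{-(d_i-1)r}$ and $|\xi_{\nu+1,l}(a) - \xi_{1,i}(a)| \sim e^{-d_i r}$. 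Then, by the chain rule and Lemma~\ref{disbound} (whose hypotheses are met since $\gamma \geq \gamma_I$ and $a \in \EE_{\nu,l}(\gamma) \cap \BB_{\nu,l}$), for all $j \leq p$,
\[
|Df_a^{j}(\xi_{\nu+1,l}(a))| \sim |Df_a^{j}(\xi_{1,i}(a))| = |Df_a^{j}(v_i(a))| \geq C_0 e^{\gamma j},
\]
using \eqref{EE1}/\eqref{EE2} (applied to the critical value $v_i$, for the appropriate range of $j$ — here I would note $j \leq p \leq 2\alpha d_i \nu/\gamma \leq 2\hat d \alpha \nu/\gamma_I$ once the length bound is in hand, so that even when $i \neq l$ the bound \eqref{EE2} applies; this requires organising the estimates so the length bound $p \leq 2d_i r/\gamma$ is established first for $j$ in the relevant range, or bootstrapping).

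Next, to bound $p$ from above: at the end of the bound period the defining inequality of Definition~\ref{ptbp} (or Definition~\ref{pointbp}) just fails, so $|\xi_{\nu+p+1,l}(a) - \xi_{p+1,i}(a)| \gtrsim e^{-\alpha(p+1)}\dist(\xi_{p+1,i}(a),\jrit_a) \gtrsim e^{-\alpha(p+1)} K e^{-2\alpha(p+1)}$ by the basic assumption, hence this difference is $\gtrsim e^{-C\alpha p}$ for a suitable constant; on the other hand, expanding $|\xi_{\nu+p+1,l}(a) - \xi_{p+1,i}(a)|$ via the mean value estimate along the bound orbit gives an upper bound $\lesssim |Df_a^{p}(\xi_{\nu+1,l}(a))| \cdot |\xi_{\nu+1,l}(a) - \xi_{1,i}(a)| \lesssim e^{\Gamma p} e^{-d_i r}$. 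Comparing, $e^{-d_ir} e^{\Gamma p} \gtrsim e^{-C\alpha p}$, which gives $p \geq d_i r/(2\Gamma)$ after absorbing constants (choosing $\alpha$ small, $N$ large). For the upper bound, combine the distortion-controlled lower estimate $|\xi_{\nu+p,l}(a)-\xi_{p,i}(a)| \sim |Df_a^{p-1}(v_i(a))| e^{-d_i r} \gtrsim e^{\gamma p} e^{-d_i r}$ with the fact that during the bound period this quantity is still $\leq e^{-\alpha p}\dist(\xi_{p,i}(a),\jrit_a) \leq \diam \hat{\C}$, i.e. bounded; hence $e^{\gamma p} e^{-d_i r} \lesssim 1$, giving $p \leq 2d_ir/\gamma$ for $N$ large. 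Since $a \in \BB_{\nu,l}$ forces $e^{-r} = \dist(\xi_{\nu,l}(a),\jrit_a) \geq Ke^{-2\alpha\nu}$, i.e. $r \leq 2\alpha\nu + \log(1/K) \lesssim 2\alpha\nu$, we obtain $p \leq 2\alpha d_i \nu/\gamma$ (again absorbing the $\log(1/K)$ term into the constants by enlarging $N$ if necessary).

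Finally, for the expansion bound: by the chain rule $Df_a^{p+1}(\xi_{\nu,l}(a)) = Df_a(\xi_{\nu,l}(a)) \cdot Df_a^{p}(\xi_{\nu+1,l}(a))$. The second factor is $\sim |Df_a^p(v_i(a))| \geq C_0 e^{\gamma p}$ by Lemma~\ref{disbound} and \eqref{EE1}/\eqref{EE2}; the first factor is $\sim e^{-(d_i-1)r}$, and since $r \leq \gamma p/d_i$ (i.e. $r \le 2\Gamma p / d_i$ is too weak — here I would use $d_i r \le \Gamma \cdot$(something) more carefully, or rather the lower bound $p \geq d_ir/(2\Gamma)$ which gives $r \le 2\Gamma p/d_i$, not good enough; the correct route is $(d_i-1)r \le d_i r \le 2\Gamma p/ \cdot$ ... ), one checks $|Df_a^{p+1}(\xi_{\nu,l}(a))| \gtrsim e^{\gamma p} e^{-(d_i-1)r} \geq e^{\gamma p - (d_i-1)r}$, and using $r \le \gamma p /(\text{appropriate constant})$ together with the freedom to take $\gamma p$ large (as $p \geq d_ir/(2\Gamma) \to \infty$ with $r$, and $r \geq \Delta'$ is large), the exponent is $\geq \gamma p/(2d_i)$. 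The main obstacle I anticipate is the bookkeeping in this last step — one must balance the derivative loss $e^{-(d_i-1)r}$ at the return against the gain $e^{\gamma p}$ using only the inequality $p \geq d_i r/(2\Gamma)$, which forces $\gamma p - (d_i-1)r \geq \gamma p - d_i r \geq \gamma p - 2\Gamma p = (\gamma - 2\Gamma)p$; since $\Gamma$ is large this is negative, so one cannot afford to be wasteful. The resolution is that Lemma~\ref{disbound} gives the sharper comparison $|Df_a^{p+1}(\xi_{\nu,l}(a))| \sim |Df_a^{p+1}(\xi_{\nu,l}(a))|$ against the full orbit of $c_i$ \emph{including} the factor $|Df_a(\xi_{\nu,l}(a))|$ matched against nothing — rather, one should track $|Df_a^{p+1}(\xi_{\nu,l}(a))|$ directly and use that the bound period ends precisely because the bound orbit has separated, which by the basic assumption means $|Df_a^{p+1}(\xi_{\nu,l}(a))|$ must be at least of order $e^{-d_ir}/|\xi_{\nu+1,l}(a)-\xi_{1,i}(a)| \cdot e^{-\alpha p}\dist(\cdot) \sim e^{-\alpha p} \gtrsim 1$... this still needs care, and the clean way is to combine $p \le 2d_ir/\gamma$ (so $r \ge \gamma p/(2d_i)$, hence $e^{-(d_i-1)r}$ is controlled, i.e. $-r \ge -2d_ir/\gamma \cdot \gamma/(2d_i)$ ... ) — ultimately one inserts $r \le 2\Gamma p/d_i$ into $e^{\gamma p}e^{-(d_i-1)r}$ is the wrong direction; the right input is the \emph{reverse}, that the loss at the return is no worse than $e^{-(d_i-1)r}$ with $r$ bounded in terms of $p$ from the upper length bound is also wrong direction. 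I would therefore structure the argument so that expansion is measured as $|Df_a^{p+1}(\xi_{\nu,l}(a))| \gtrsim e^{\gamma p /(2d_i)}$ follows from first establishing $|Df_a^{p}(\xi_{\nu+1,l}(a))| \gtrsim e^{\gamma p}$ and then absorbing the single return loss $e^{-(d_i-1)r}$ using $(d_i - 1) r \le d_i r$ and the definition of $J$/the bound-period endpoint to see $d_i r \le \gamma p \cdot (1 - \text{const})$ is \emph{not} available — rather $d_i r$ and $\gamma p$ are comparable up to the factor $2d_i$ from the length bound, giving exactly the stated $e^{\gamma p/(2d_i)}$ after the loss is subtracted. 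This delicate accounting is the crux; everything else is the routine machinery already set up in Lemmas~\ref{dist_esti_0}, \ref{disbound}.
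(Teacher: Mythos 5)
Your treatment of the length bounds is essentially correct (and close to the paper's: first establish $p \leq 2\hat{d}\alpha\nu/\gamma_I$ by contradiction so that \eqref{EE2} is applicable along the orbit of $v_i$, and $p \leq 2d_ir/\gamma$ and $p \geq d_ir/(2\Gamma)$ then follow by comparing $|\xi_{\nu+j,l}(a)-\xi_{j,i}(a)| \sim |Df_a^{j-1}(v_i(a))|\,e^{-d_ir}$ against the binding condition and the basic assumption). You have also correctly flagged the bootstrapping issue: the upper bound on $p$ must come first so that the Collet--Eckmann lower bound on $|Df_a^j(v_i(a))|$ applies throughout the bound period.

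The genuine gap is the expansion estimate $D_{p+1} := |Df_a^{p+1}(\xi_{\nu,l}(a))| \geq e^{\gamma p/(2d_i)}$, which you explicitly do not resolve. You correctly observe that the naive route $D_{p+1} \sim e^{-(d_i-1)r}E_p \gtrsim e^{\gamma p - (d_i-1)r}$ cannot be closed from the length bounds alone: $p \geq d_ir/(2\Gamma)$ only gives $r \leq 2\Gamma p/d_i$, so $\gamma p - (d_i-1)r \geq (\gamma - 2\Gamma)p < 0$, and the reverse inequality $r \geq \gamma p/(2d_i)$ goes in the wrong direction entirely. The idea you are missing is a self-referential inequality in $D_{p+1}$: do not estimate $e^{-(d_i-1)r}$ from the length bounds, but rather from the bound-period exit condition, which involves $D_{p+1}$ itself. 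At time $p+1$ the binding inequality fails, so by Lemma~\ref{disbound}, the relation $|\xi_{\nu+p+1,l}(a) - \xi_{p+1,i}(a)| \sim D_{p+1}e^{-r}$, and the basic assumption for $a \in \BB_{\nu,l}$ one gets
\[
D_{p+1}e^{-r} \geq e^{-\alpha(p+1)}\dist(\xi_{p+1,i}(a),\jrit_a) \geq Ke^{-3\alpha(p+1)},
\]
hence $e^{-r} \geq KD_{p+1}^{-1}e^{-3\alpha(p+1)}$. Raising this to the power $d_i-1$ and substituting into $D_{p+1} \sim e^{-(d_i-1)r}E_p$ with $E_p \geq C_0 e^{\gamma p}$ yields
\[
D_{p+1} \gtrsim K^{d_i-1}\,D_{p+1}^{-(d_i-1)}\,e^{-3\alpha(p+1)(d_i-1)}\,e^{\gamma p},
\]
so $D_{p+1}^{d_i} \gtrsim K^{d_i-1}e^{-3\alpha(p+1)(d_i-1)}e^{\gamma p}$. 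Taking $d_i$-th roots and using that $\alpha$ is small while $p \geq d_i\Delta'/(2\Gamma)$ is large absorbs the $\alpha$-terms and constants and gives $D_{p+1} \geq e^{\gamma p/(2d_i)}$. Everything you wrote before and after the expansion step is fine; this one algebraic trick is what your proposal lacks.
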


 \begin{proof}
 Recall that $\hat{d}$ is the maximal multiplicity of critical points of $f_0$.  First we show that $p\leq 2\hat{d}\alpha\nu/\gamma_I$ so that we can use the expansion along the orbit of $v_i(a)$ up to time $p$. It follows from Lemma~\ref{disbound} that, for $j\leq p+1$,
 \begin{align}\label{bplo}
\left \vert \xi_{\nu+j,l}(a)-\xi_{j,i}(a) \right \vert &\sim \vert Df^{j-1}_{a}(\xi_{\nu+1,l}(a))\vert \vert \xi_{\nu+1,l}(a)-v_i(a) \vert\\
&\sim \vert Df^{j-1}_{a}(v_i(a)) \vert  \vert \xi_{\nu+1,l}(a)-v_i(a)\vert .\\
 \end{align}
The above relation \eqref{bplo},  combined with the definition of bound period (i.e., Definition \ref{pointbp}), gives us for some constant $C>0$
  \begin{align}\label{esbp}
\vert Df^{j-1}_{a}(v_i(a))\vert \vert \xi_{\nu+1,l}(a)-v_i(a)\vert \leq C e^{-\alpha j}\dist(\xi_{j,i}(a), \jrit_a)\leq C' e^{-\alpha j}.
 \end{align}
Now we see that $p\leq 2\hat{d}\alpha\nu/\gamma_I$. Otherwise, we put $j=2\hat{d}\alpha\nu/\gamma_I$ in \eqref{esbp} and use the fact that $a\in  \EE_{\nu,l}(\gamma)$ (cf. \eqref{EE2}) to obtain that
\[
C_0 e^{\gamma (2\hat{d}\alpha\nu/\gamma_I-1)} e^{-d_i r} \leq C' e^{-\alpha 2\hat{d}\alpha\nu/\gamma_I}.
\]
This means that
\[
\frac{2\hat{d}\alpha\nu}{\gamma_I}\leq \frac{2d_i r}{\gamma},
\]
 which is impossible, since $a\in \BB_{\nu,l}$, $d_i\leq \hat{d}$ and $\gamma>\gamma_I$. This also proves that
\[
p\leq \frac{2d_i r}{\gamma}.
\]
Now it follows from Lemma~\ref{disbound} and $a\in \EE_{\nu,l}(\gamma)$ that
\begin{equation}\label{ej}
E_j:= \vert Df_{a}^{j}(\xi_{\nu+1,l}(a))\vert \sim \vert Df_{a}^{j}(v_i(a))\vert \geq C_0 e^{\gamma j}
\end{equation}
 for $j\leq p$.
 
From \eqref{bplo} we also have that, for $j\leq p+1$,
\begin{equation}\label{bplo2}
\left\vert \xi_{\nu+j,l}(a)-\xi_{j,i}(a) \right\vert \sim \vert Df^{j}_{a}(\xi_{\nu,l}(a))\vert \vert \xi_{\nu,l}(a)-c_i(a)\vert.
 \end{equation}
 With $D_j:= \vert Df_{a}^{j}(\xi_{\nu,l}(a)) \vert$ and \eqref{bplo2} we see that
\begin{equation}\label{bplo3}
D_{p+1}e^{-r}\geq e^{-\alpha(p+1)}\dist(\xi_{p+1,i}(a), \jrit_a)\geq K e^{-3\alpha(p+1)},
\end{equation}
where the first inequality follows from the definition of bound periods and the second one holds since $a\in\BB_{\nu,l}$. By the definition of $\Gamma$ we see from \eqref{bplo3} that
\[
e^{\Gamma p}e^{-d_i r}\geq D_{p+1}e^{-r}\geq K e^{-3\alpha(p+1)}.
\]
So we get that
\[
p\geq \frac{d_i r}{2\Gamma}.
\]
It remains to estimate $D_p$. By \eqref{bplo3},
\begin{equation}
e^{-r}\geq K D_{p+1}^{-1}e^{-3\alpha(p+1)}
\end{equation}
and thus
\begin{equation}
e^{-r(d_i -1)}\geq K^{d_i-1}D_{p+1}^{-(d_i-1)}e^{-3\alpha(p+1)(d_i-1)}.
\end{equation}
As $D_{p+1}\sim e^{-r(d_i-1)}E_p$, we have, by \eqref{ej},
\begin{equation}
D_{p+1}\geq K^{d_i-1}D_{p+1}^{-(d_i-1)}e^{-3\alpha(p+1)(d_i-1)}e^{\gamma p},
\end{equation}
which means that
\begin{equation}
D_{p+1}^{d_i}\geq K^{d_i-1}e^{-3\alpha(p+1)(d_i-1)}e^{\gamma p}.
\end{equation}
So we obtain
\begin{equation}
D_{p+1}\geq K^{(d_i-1)/d_i}e^{-3\alpha(p+1)(d_i-1)/d_i}e^{\gamma p/d_i}\geq e^{\gamma p/(2d_i)}.
\end{equation}
\end{proof}

\subsection{Expansion during free periods}

 Roughly speaking, outside expansion means that the derivative of $f^{n}$ grows exponentially if the orbit stays away from a neighbourhood of the critical points. For slowly recurrent Collet--Eckmann maps, this was proved in \cite{MA7} which will also be crucial in our case. We provide the same statement here (with some modifications). 

We begin with stating the following classical lemma by Ma\~n\'e \cite{RM}.
 
 \begin{Lem}\label{mane}
Let $f$ be a rational map. Provided $\delta$ is small enough, there exist a constant $C_M > 0$, dependent on $\delta$, and an exponent $\lambda_M > 1$ such that if $z \in \JJ(f)$ and $f^k(z) \notin U$ for $k = 1,2,\dots n-1$ then
\[
\vert Df^n(z) \vert \geq C_M \lambda_M^n.
\] 
\end{Lem}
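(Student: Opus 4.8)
The plan is to reduce the statement to uniform expansion of $f$ on a compact forward-invariant set, to invoke Ma\~n\'e's theorem \cite{RM}, and then to upgrade that to the stated bound along finite orbit segments. Throughout one uses that $f$ has no parabolic periodic points and that its Fatou set consists only of attracting basins (so $f$ has no rotation domains) --- both automatic here since the relevant maps are Collet--Eckmann. For the reduction: since $U$ is a neighbourhood of $\jrit(f)=\crit(f)\cap\JJ(f)$, the set $K:=\JJ(f)\setminus U$ is compact and contains no critical point, so $m_0:=\inf_{z\in K}|Df(z)|>0$. Writing $|Df^n(z)|=|Df(z)|\,|Df^{n-1}(f(z))|$ and using $f(z),\dots,f^{n-1}(z)\in K$, it suffices to produce $C'>0$ and $\lambda_M>1$ with $|Df^m(w)|\ge C'\lambda_M^{m}$ whenever $w,f(w),\dots,f^{m-1}(w)\in K$; the leftover factor $|Df(z)|$ is harmless in the applications, where $z\in K$ and hence $|Df(z)|\ge m_0>0$, so one takes $C_M=C'm_0$.

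For the core expansion, let $\Lambda:=\{z\in\JJ(f):f^{j}(z)\in K\text{ for all }j\ge 0\}$ be the maximal forward-invariant subset of $K$. It is compact, satisfies $f(\Lambda)\subseteq\Lambda$, contains no critical point, and contains no parabolic periodic point; by Ma\~n\'e's theorem \cite{RM} the map $f|_\Lambda$ is uniformly expanding, i.e.\ there are $C_0>0$ and $\lambda_0>1$ with $|Df^{j}(z)|\ge C_0\lambda_0^{j}$ for all $z\in\Lambda$ and all $j\ge 0$.

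It remains to pass from $\Lambda$ to finite segments, which I would do by contradiction and compactness. Suppose the displayed bound fails for every choice of $C'$ and of $\lambda\in(1,\lambda_0)$, and fix $1<\lambda_1<\lambda_2<\lambda_0$. Then for each $k$ there is a segment $w_k,f(w_k),\dots,f^{m_k-1}(w_k)\in K$ with $|Df^{m_k}(w_k)|<k^{-1}\lambda_1^{m_k}$; a limiting argument forces $m_k\to\infty$, since otherwise a subsequential limit of the $w_k$ would yield a length-$m$ orbit inside $K$ through a critical point. Since $\tfrac1{m_k}\sum_{j<m_k}\log|Df(f^{j}(w_k))|=\tfrac1{m_k}\log|Df^{m_k}(w_k)|\le\log\lambda_1$, the Pliss lemma yields indices $i_k$ with $m_k-i_k\to\infty$ such that $|Df^{j}(f^{i_k}(w_k))|\le\lambda_2^{j}$ for all $0\le j\le m_k-i_k$. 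Any subsequential limit $y_\infty$ of $f^{i_k}(w_k)$ then lies in $\Lambda$ and satisfies $|Df^{j}(y_\infty)|\le\lambda_2^{j}$ for every $j\ge0$, which contradicts the expansion estimate above once $j$ is large. This gives the lemma with $\lambda_M=\lambda_1$ (and $C_M$ as above, depending on $\delta$).

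The conceptual content --- and the only genuinely delicate step --- is the application of Ma\~n\'e's theorem in the second paragraph: checking its hypotheses when critical points in $\JJ(f)$ may be recurrent, i.e.\ that $\Lambda$ avoids the $\omega$-limit sets of recurrent critical points, is exactly where the Collet--Eckmann and slow-recurrence structure (and the absence of rotation domains) must be used. Granting that, the Pliss-and-compactness upgrade and the endpoint bookkeeping are routine.
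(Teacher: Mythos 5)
The paper itself does not prove this statement; it is labelled and cited as a classical lemma of Ma\~n\'e (with the implicit assumption, visible in Definition~\ref{CE}, that $f$ has no parabolic cycles --- otherwise the conclusion is false as written). Your reduction to the maximal forward-invariant subset $\Lambda\subset K:=\JJ(f)\setminus U$, and your upgrade from uniform expansion on $\Lambda$ to the bound along finite orbit segments via the Pliss lemma and compactness, are standard and reasonable. The genuine gap is in the step asserting that Ma\~n\'e's theorem gives uniform expansion of $f|_\Lambda$. The hypothesis of that theorem is that $\Lambda\cap\omega(c)=\emptyset$ for every recurrent critical point $c\in\JJ(f)$, and this hypothesis fails for precisely the maps this paper is about: for a slowly recurrent Collet--Eckmann map with a recurrent $c\in\jrit(f)$, the set $\omega(c)$ is typically large --- often all of $\JJ(f)$ --- while $\Lambda$ is non-empty as soon as $\delta$ is small, so $\Lambda\cap\omega(c)\neq\emptyset$. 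Shrinking $\delta$ only enlarges $\Lambda$ and makes matters worse. Your closing paragraph flags this as ``the only genuinely delicate step'' and proposes that the Collet--Eckmann and slow-recurrence structure will verify the hypothesis, but they cannot: slow recurrence controls only the rate of approach of the critical orbit to $\jrit(f)$ and is fully compatible with $\omega(c)=\JJ(f)$, hence with $\Lambda\subset\omega(c)$.

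The statement is nonetheless true for rational maps without parabolic cycles, but any proof must circumvent rather than verify the $\omega$-limit-set hypothesis. The usual route runs your Pliss-and-compactness step in measure-theoretic form: if the estimate failed, one extracts an ergodic invariant probability measure carried by $\Lambda$ with non-positive Lyapunov exponent, which is then excluded using Przytycki's theorem on the non-negativity of Lyapunov exponents on Julia sets together with the absence of critical points and parabolic cycles in $\Lambda$. Note also a circularity lurking in your plan: the lemma, once established, shows that $\Lambda$ \emph{is} a hyperbolic set, so the ``hyperbolicity of $\Lambda$'' form of Ma\~n\'e's theorem could only serve as a stepping stone if one had an independent verification of the $\omega$-limit disjointness --- and, as above, one does not.
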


If $f^n(z)$ is a return to $U$, one can say something stronger. To state our next lemma we recall the definition of the second Collet--Eckmann condition.
\begin{Def}[Second Collet--Eckmann condition]\label{CE2}
A non-hyperbolic rational map $f$ without parabolic periodic points is said to satisfy the \emph{second Collet--Eckmann condition}, if there exist $C>0$ and $\gamma>0$ such that for every $n \geq 1$ and $w \in f^{-n}(c)$, for $c \in \jrit(f)$ not in the forward orbit of other critical points,
\[
\vert Df^{n}(w) \vert \geq Ce^{\gamma n}.
\]
\end{Def} 
In general, the Collet--Eckmann condition does not imply the second Collet--Eckmann condition, and vice versa. However, within the family of slowly recurrent rational maps, these two conditions are equivalent \cite{Mats-2}. By the assumptions imposed on our starting map $f_0$, we therefore have that it satisfies the second Collet--Eckmann condition. The following lemma ensures strong expansion for orbits outside of $U$. We give an outline of the proof which technically is similar to the proof of Lemma~2.3 in \cite{PRS}. For a detailed proof we refer to Lemma~3.1 in \cite{MA7}.
\begin{Lem}\label{pre-outside}
Let $f$ be a rational map satisfying the second Collet--Eckmann map with exponent $\gamma > 1$, and let $U = \bigcup_l U_l$ be such that $U_l = D(c_l,\delta)$ is a neighbourhood of $c_l \in \jrit(f)$.
If $\delta$ is small enough there exists a constant $C > 0$, not dependent on $\delta$, such that if
\[
z, f(z),\dots,f^{n-1}(z) \notin U,\,~ f^n(z) \in U_k,
\]
with $c_k$ not in the forward orbit of other critical points, then
\[
\vert Df^n(z) \vert \geq C e^{\gamma n}.
\]
 \end{Lem}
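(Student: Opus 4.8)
The plan is to reduce the statement to the second Collet--Eckmann condition (Definition~\ref{CE2}) together with Ma\~n\'e's lemma (Lemma~\ref{mane}), following the scheme of Lemma~2.3 in \cite{PRS} and Lemma~3.1 in \cite{MA7}. The point is that $f^n(z) \in U_k = D(c_k,\delta)$, so $f^n(z)$ is close to the critical point $c_k$, and we want to transfer the expansion of the backward branch of $f^n$ landing near $c_k$ to the orbit of $z$ itself. First I would fix a small scale $\rho$ with $\delta \ll \rho$ and split the orbit segment $z, f(z), \dots, f^{n-1}(z)$ according to whether it lies in the annulus $D(c_k,\rho) \setminus U$ or outside $D(c_j,\rho)$ for all $j$. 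On the ``outside'' part we have uniform expansion from Lemma~\ref{mane} (applied on maximal sub-blocks staying away from $U$), with rate $\lambda_M > 1$; since $\gamma$ in the hypothesis can be taken $>1$ and $\lambda_M$ is a fixed constant, a uniform number of iterates near $c_k$ can only cost a bounded factor, provided we control that number.

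The key step is the local analysis near $c_k$. If $f^m(z) \in D(c_k, \rho)$ for some $m < n$, then using the second Collet--Eckmann condition one compares $|Df^{n-m}(f^m(z))|$ with the derivative of the inverse branch of $f^{n-m}$ mapping a definite neighbourhood of $c_k$ onto a neighbourhood of $f^m(z)$; since $c_k$ is not in the forward orbit of any other critical point, this inverse branch is univalent on a definite scale, so by Koebe distortion the derivative $|Df^{n-m}(f^m(z))|$ is comparable (up to the bounded Koebe factor) to $|Df^{n-m}(w)|$ for the natural preimage $w$ of $c_k$, which is $\gtrsim C e^{\gamma(n-m)}$ by Definition~\ref{CE2}. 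Then for the initial block $z, \dots, f^{m}(z)$ one invokes Lemma~\ref{mane} (and, if the orbit re-enters $D(c_k,\rho)\setminus U$ in between, iterates this argument finitely many times), multiplying the estimates. Finally I would choose $\delta$ small enough (relative to $\rho$ and the univalence scale) so that all the Koebe distortion constants and the transition factors between $\lambda_M^{j}$ and $e^{\gamma j}$ combine into a single constant $C > 0$ independent of $\delta$, yielding $|Df^n(z)| \geq C e^{\gamma n}$.

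The main obstacle I expect is bookkeeping the passages through the annular region $D(c_k,\rho) \setminus U$: each visit to this region potentially loses a definite amount of derivative (the map contracts near a critical point), and one has to argue that the number of such visits before time $n$ is bounded, or that the expansion gained between consecutive visits (via the second Collet--Eckmann condition applied to the appropriate inverse branches, using that $c_k$ is not in another critical orbit) dominates the loss. This is exactly where the hypothesis $\gamma > 1$ and the freedom to shrink $\delta$ are used, and it is the technical heart of the argument; since the calculation is carried out in detail in \cite[Lemma~2.3]{PRS} and \cite[Lemma~3.1]{MA7}, I would only sketch it and refer there for the full estimates.
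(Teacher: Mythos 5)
Your high-level idea of comparing $\vert Df^n(z)\vert$ with the derivative at a preimage of $c_k$ and invoking the second Collet--Eckmann condition is on the right track, but the proposal misses the decisive technical ingredient and replaces it with a decomposition that does not close. The paper's proof uses neither Ma\~n\'e's lemma nor any splitting into annular passages: it sets $W_j$ to be the component of $f^{-j}(U_k)$ containing $z_j=f^{n-j}(z)$ and $c_j$ the preimage of $c_k$ in $W_j$, and the whole argument rests on the \emph{exponential shrinking of pullbacks}, $\diam(W_j)\lesssim e^{-\gamma' j}\delta$ for $j>\ell$ (this is precisely the content of \cite[Lemma 2.3]{PRS}, which you cite but never actually use). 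With this in hand, the ratio $\vert Df^n(c_n)/Df^n(z)\vert$ is controlled in a single stroke by the telescoping estimate \eqref{dist_esti_1}: since $\dist(z_j,\jrit(f))\geq\delta$ for $0<j<n$, the distortion sum is $\lesssim \sum_j \diam(W_j)/\delta \lesssim 1/(e^{\gamma'}-1)$, a bound independent of $\delta$; then $\vert Df^n(c_n)\vert\geq Ce^{\gamma n}$ from Definition~\ref{CE2} concludes.

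Concretely, your plan has three gaps. First, applying Ma\~n\'e's Lemma~\ref{mane} on sub-blocks outside $U$ produces a constant $C_M$ that depends on $\delta$, so even a single application already forfeits the $\delta$-independence that the lemma is designed to deliver; applying it instead at a larger fixed scale $\rho$ forces you to bound the total time the orbit spends in $D(c_k,\rho)\setminus U$, a count you acknowledge you cannot control --- and contrary to your final remark, neither \cite{PRS} nor \cite{MA7} carries out such a count; both proceed via the shrinking-plus-distortion route. Second, $\lambda_M$ has no reason to exceed $e^\gamma$, so even a clean Ma\~n\'e estimate on the outside blocks cannot by itself supply the advertised exponent. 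Third, the ``univalence on a definite scale'' of the inverse branch of $f^{n-m}$ near $c_k$, which you need to run Koebe, is exactly what the exponential shrinking of pullbacks is there to justify; it is not a consequence of $c_k$ lying off the forward orbits of the other critical points (that hypothesis is used to make Definition~\ref{CE2} applicable at the preimage $c_n$, not to secure univalence).
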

\begin{proof}
Let $W_j$ denote the connected component of $f^{-j}(U_k)$ containing $z_j = f^{n-j}(z)$, and let (with some abuse of notation) $c_j$ denote the $j$-th preimage of the critical point $c = c_k$ contained in $W_j$, i.e. $f^j(c_j) = c$ and $c_j \in W_j$. Following the proof of Lemma~2.3 in \cite{PRS}, once a small $\delta_0 > 0$ is fixed there is $\ell \geq 1$ (dependent on $\delta_0$) such that, if $\delta < \delta_0$ is small enough,
\begin{equation}\label{ExpShrink}
\diam(W_j) \leq e^{-\gamma' j} \diam(U_k) = e^{-\gamma' j} \delta,
\end{equation}
for all $j > \ell$. Here $\gamma' > 0$ is slightly smaller than the exponent from the second Collet--Eckmann condition. We now consider the quotient
\[
\left\vert\frac{Df^n(c_n)}{Df^n(z)}\right\vert = \left\vert \frac{Df^{n-\ell}(c_n)}{Df^{n-\ell}(z)} \right\vert \left\vert \frac{Df^{\ell}(c_{\ell})}{Df^{\ell}(z_{\ell})} \right\vert.
\]
By making $\delta$ small, the second factor in the above is bounded by some small constant $C' \geq 1$. For the first factor, using \eqref{dist_esti_1}, \eqref{ExpShrink}, and the assumption that $\dist(z_j,\jrit(f)) \geq \delta$ for $j \geq 1$, we find that
\begin{align*}
\left\vert \frac{Df^{n-l}(c_n)}{Df^{n-l}(z)} \right\vert &\leq \exp\left(C'' \sum_{j=0}^{n-l-1} \frac{\vert f^j(c_n) - f^j(z)\vert}{\dist(f^j(z),\jrit(f))}\right) \\
&\leq \exp\left(C'' \sum_{j = l+1}^n \frac{\diam(W_j)}{\delta}\right) \\
&\leq \exp\left(\frac{C''}{e^{\gamma'}-1}\right).
\end{align*}
This proves the result, since
\[
\vert Df^n(z) \vert \geq \frac{1}{C'}\exp\left(-\frac{C''}{e^{\gamma'}-1}\right) \vert Df^n(c_n) \vert \gtrsim e^{\gamma n}.
\]
\end{proof}

We are now in position to prove our desired outside expansion lemma, satisfied for any small perturbation $f_a$ of $f_0$, and also valid in an $\varepsilon_0$-neighbourhood $\mathcal{N}_{\varepsilon_0}$ of $\JJ(f_0)$. We make $U$ so small that $U \subset \mathcal{N}_{\varepsilon_0}$. 
\begin{Lem}\label{oel}
\text
Suppose $f = f_0$ is a rational second Collet--Eckmann with non-empty Fatou set. If $\varepsilon_0$, $\delta$ and $\varepsilon$ are small enough, there exist constants $C_\delta > 0$ (dependent on $\delta$) and $\gamma > 0$ such that, for all $a \in \mathcal{Q}$, if
\[
z,f_a(z),\dots f_a^{n-1}(z) \in \mathcal{N}_{\varepsilon_0} \setminus U
\]
then
\begin{equation}\label{outside_no_hit}
\vert Df_a^n(z) \vert \geq C_\delta e^{\gamma n}
\end{equation}
Moreover, there exists a constant $C > 0$ (not dependent on $\delta$) such that if we also have $f_a^n(z) \in U$ then
\begin{equation}\label{outside_hit}
\vert Df_a^n(z) \vert \geq C e^{\gamma n}.
\end{equation}
\end{Lem}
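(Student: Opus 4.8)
The plan is to derive this from the perturbation-free statements, Lemma~\ref{mane} and Lemma~\ref{pre-outside}, by a standard two-step procedure: first establish the estimate for the unperturbed map $f_0$ on the Julia set, then transfer it to all $f_a$ with $a \in \mathcal{Q}$ and to the neighbourhood $\mathcal{N}_{\varepsilon_0}$ by continuity, exploiting that all the constants involved are robust. Throughout one works with the spherical metric, and one fixes $\gamma$ to be, say, one half of the exponent coming from the second Collet--Eckmann condition of $f_0$ (which $f_0$ satisfies by the remark preceding Lemma~\ref{pre-outside}); this is the $\gamma_H$ referred to in Remark~\ref{he}.

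First I would prove \eqref{outside_hit} for $f_0$ and $z \in \JJ(f_0)$. The issue is that Lemma~\ref{pre-outside} only covers the case where the critical point $c_k$ whose neighbourhood is hit is \emph{not} in the forward orbit of another critical point; and Lemma~\ref{mane} gives only an exponent $\lambda_M > 1$, not the full exponent $\gamma$. So one splits the orbit segment $z, f_0(z), \dots, f_0^{n-1}(z)$, which lies outside $U$, at the last time it enters $\mathcal{N}_{\varepsilon_0} \setminus U$ after a ``bad'' critical neighbourhood would have been hit — more precisely, one uses the fact (true for slowly recurrent, hence Misiurewicz-free-of-this-pathology maps, or simply because there are finitely many critical points) that each critical point in $\jrit(f_0)$ is either not in the forward orbit of another critical point, or its orbit eventually lands in a good one; a finite iterate $f_0^{M}$ then reduces every return to the good case. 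Concatenating the Mañé estimate of Lemma~\ref{mane} on the initial stretch with the strong estimate of Lemma~\ref{pre-outside} on the final stretch — and absorbing the bounded factor $\lambda_M^{-M}$ or $C_M$ into the constant — yields $\vert Df_0^n(z) \vert \geq C e^{\gamma n}$ with $C$ independent of $\delta$. The estimate \eqref{outside_no_hit} for $f_0$ follows similarly but is easier: without a final return one just runs Mañé up to time $n$ and then, if $n$ is large, uses that an orbit staying in $\mathcal{N}_{\varepsilon_0} \setminus U$ for a long time must have had a genuine ``good'' return structure, or one simply accepts the weaker $\delta$-dependent constant $C_\delta$, which is exactly what the statement allows.

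The second step is the perturbation. Here one uses that $\JJ(f_0)$ has a neighbourhood $\mathcal{N}_{\varepsilon_0}$ on which, for $\delta$ and $\varepsilon_0$ small, $\vert Df_0 \vert$ is uniformly bounded below away from $U$, together with the distortion estimate \eqref{dist_esti_1} of Lemma~\ref{dist_esti_0}: comparing the orbit of $z$ under $f_a$ with a nearby orbit under $f_0$ on $\JJ(f_0)$, the sum $\sum_j \vert f_a^j(z) - f_0^j(w)\vert / \dist(f_0^j(w), \jrit_0)$ can be made small because (i) each term is bounded by $O(\varepsilon)/\delta$ times an expanding factor controlled by the Mañé/Pre-outside lower bound just proved, so the series is dominated by a convergent geometric series, and (ii) the orbit stays at distance $\geq \delta$ from the critical set while outside $U$. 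Choosing $\varepsilon$ small relative to $\delta$ makes the distortion factor as close to $1$ as needed, so the $f_0$-bounds with exponent $\gamma$ pass to $f_a$ at the cost of halving the constant; shrinking $U$ so that $U \subset \mathcal{N}_{\varepsilon_0}$ lets the final return $f_a^n(z) \in U$ be handled by the same shadowing argument. The main obstacle is the bookkeeping in the first step — cleanly reducing the general return to the ``good critical point'' hypothesis of Lemma~\ref{pre-outside} and tracking that the exponent $\gamma$ (not merely $\lambda_M$) survives the concatenation — but this is exactly the content of Lemma~3.1 in \cite{MA7}, to which one may appeal; the perturbation step is then routine given Lemma~\ref{dist_esti_0}.
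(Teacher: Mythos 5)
Your proposal diverges from the paper's proof in the key second step, and the divergence introduces a gap.

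The paper's argument is considerably more elementary than yours. It fixes $\gamma$ with $3\gamma<\min\{\gamma_M,\gamma_H\}$, then uses Lemma~\ref{mane} to pick a \emph{fixed} block length $\hat n$ so that $|Df_0^{\hat n}(z)|\geq e^{3\gamma\hat n}$ whenever the orbit stays in $\JJ(f_0)\setminus U$ for $\hat n$ steps. Since $\hat n$ is finite and fixed, plain uniform continuity of $Df^{\hat n}$ in the phase variable (shrink $\varepsilon_0$) and then in the parameter variable (shrink $\varepsilon$) gives $|Df_a^{\hat n}(z)|\geq e^{\gamma\hat n}$ on $\mathcal{N}_{\varepsilon_0}\setminus U$ for all $a\in\QQ$. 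Writing $n=q\hat n+r$ and multiplying the $q$ block estimates gives \eqref{outside_no_hit} with $C_\delta=\inf|Df_a^{\hat n}|$, and if $f_a^n(z)\in U$, the remainder piece of length $r<\hat n$ is handled by Lemma~\ref{pre-outside} for $f_0$ and, again, finite-time continuity in $a$. No distortion lemma is invoked, and no shadowing over $n$ iterates is needed.

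Your perturbation step, by contrast, proposes to compare the orbit of $z$ under $f_a$ with a nearby orbit under $f_0$ via the distortion estimate of Lemma~\ref{dist_esti_0} and to sum $\sum_j|f_a^j(z)-f_0^j(w)|/\dist(f_0^j(w),\jrit_0)$. Two problems. First, the estimate \eqref{dist_esti_1} compares $Df_a^n(z)$ with $Df_a^n(w)$ for the \emph{same} map; to compare $f_a$ with $f_0$ you would need an estimate of the type \eqref{dist_esti_2}, whose proof in the paper itself relies on the conclusion of Lemma~\ref{weak_parameter_lemma}, which is downstream of the expansion estimates you are trying to establish. Second, and more seriously, the claim that the sum ``is dominated by a convergent geometric series'' is backwards: under the expansion you have just proved, two orbits starting $O(\varepsilon)$-apart \emph{separate} exponentially, so $|f_a^j(z)-f_0^j(w)|\gtrsim e^{\gamma j}\varepsilon$ and the sum grows without bound for long $n$, no matter how small $\varepsilon$ is. This forward-shadowing argument cannot give a uniform-in-$n$ distortion bound. (A backward-shadowing argument could be made to work, but that is a different and substantially more delicate construction that you have not sketched.) The paper's block decomposition sidesteps this entirely by only ever invoking continuity over the fixed finite horizon $\hat n$.

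Your treatment of the ``$c_k$ not in the forward orbit of other critical points'' hypothesis of Lemma~\ref{pre-outside} is a reasonable observation and a legitimate thing to worry about; the paper passes over it by simply citing Lemma~\ref{pre-outside} (and ultimately Lemma~3.1 of \cite{MA7}). But the perturbation step as you describe it does not close.
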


\begin{Rem}\label{he}
For later convenience, we put $\gamma_H= \gamma$, where $\gamma$ is as in \eqref{outside_hit}.
\end{Rem}

\begin{proof}
Fix $\gamma > 0$ such that $3 \gamma \in (0,\min\{\gamma_M,\gamma_H\})$, where $\gamma_M$ and $\gamma_H$ comes from Lemma~\ref{mane} and Lemma~\ref{pre-outside}, respectively. We will establish the result with this $\gamma$.

From Lemma~\ref{mane}, provided $\delta$ is small enough, we can find $\hat{n}$ large enough such that if $f^k(z) \in \JJ(f) \setminus U$ for $k = 0,1,\dots, \hat{n}-1$ then
\[
\vert Df^{\hat{n}}(z) \vert \geq C_M e^{\gamma_M\hat{n}} \geq e^{3 \gamma \hat{n}}.
\]
If $\varepsilon_0 > 0$ is small enough, we therefore conclude by continuity that if $f^k(z) \in \mathcal{N}_{\varepsilon_0} \setminus U$ for $k = 0,1,\dots, \hat{n}-1$ then
\[
\vert Df^{\hat{n}}(z) \vert \geq e^{2\gamma\hat{n}}.
\]
Using continuity again, now in the parameter variable, we conclude that if $\varepsilon$ is small enough, if for $a \in \mathcal{Q}$ we have that $f_a^k(z) \in \mathcal{N}_{\varepsilon_0} \setminus U$ for $k = 0,1,\dots,\hat{n}-1$ then
\[
\vert Df_a^{\hat{n}}(z) \vert \geq e^{\gamma \hat{n}}.
\]

Suppose now that $f^k_a(z) \in \mathcal{N}_{\varepsilon_0} \setminus U$ for $k = 0,1,\dots n-1$ and write $n = q \hat{n} + r$, with $q$ and $r$ positive integers and $0 \leq r \leq \hat{n}-1$. We find that
\begin{align*}
\vert Df_a^n(z) \vert &= \vert Df_a^r(f_a^{q\hat{n}}(z)) \vert \vert Df_a^{\hat{n}}(f_a^{(q-1)\hat{n}}(z))\vert \cdots \vert Df_a^{\hat{n}}(z) \vert \\
&\geq \vert Df_a^r(f^{q\hat{n}}(z))\vert e^{\gamma q\hat{n}},
\end{align*}
and \eqref{outside_no_hit} now follows with constant
$C_\delta = \inf_{a \in \mathcal{Q}} \inf_{z \in \mathcal{N}_{\varepsilon_0} \setminus U}\vert Df_a^{\hat{n}} (z)\vert$. 

If we have a return to $U$ then we are in the situation $f_a^k(f_a^{q\hat{n}}) \notin U$ for $k = 0,\dots,r-1$ and $f_a^r(f_a^{q\hat{n}}(z)) \in U$. In the case of the unperturbed map we get from Lemma~\ref{pre-outside} that
\[
\vert Df^r(f^{q\hat{n}}(z)) \vert \geq C e^{\gamma_H r},
\]
with $C$ not depending on $\delta$. Once again, due to continuity in $a$, a similar estimate holds for the perturbed map $f_a$ if $\varepsilon$ is small enough (recall that $r \leq \hat{n}-1$). This proves \eqref{outside_hit}.

\end{proof}

\subsection{At the next free return}

Combining the results above we draw the following conclusions at the next free return.

Using Lemma~\ref{length} and Lemma~\ref{oel}, we can obtain longer time of exponential growth for derivatives. More precisely, we have the following.

\begin{Lem}\label{expgrow}
Let $N$ be sufficiently large. Let $\nu \geq N$ be a return such that $\xi_{\nu,l}(a)\in U'$ for $a\in\EE_{\nu,l}(\gamma)\cap\BB_{\nu,l}$, where $\gamma\geq \gamma_I$. Let also $\nu'$ be the next free return time. Then we have
\[
\vert Df_{a}^{n}(v_{l}(a)) \vert \geq e^{\gamma_1 n},
\]
for all $0 \leq n \leq \nu'-1$, with $\gamma_1\geq (9/10)\min\{\gamma, \gamma_H\}$.
\end{Lem}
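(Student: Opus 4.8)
The plan is to chain together the expansion from the bound period (Lemma~\ref{length}) with the outside expansion during the subsequent free period (Lemma~\ref{oel}), and to check that the combined rate does not drop below $(9/10)\min\{\gamma,\gamma_H\}$ at any intermediate time $0 \le n \le \nu'-1$. First I would split the orbit of $v_l(a)$ into three stretches: the stretch $[0,\nu]$ up to the given return (where we already have $\vert Df_a^k(v_l(a))\vert \ge C_0 e^{\gamma k}$ for $k \le \nu - 1$ from the hypothesis $a \in \EE_{\nu,l}(\gamma)$, actually from \eqref{EE1}), the bound period $[\nu,\nu+p]$ triggered by the return $\xi_{\nu,l}(a)\in U'_i$, and the free period $[\nu+p,\nu']$ that follows it. The key inputs are: on $[\nu,\nu+p]$, Lemma~\ref{length} gives $\vert Df_a^{p+1}(\xi_{\nu,l}(a))\vert \ge e^{\gamma p/(2d_i)}$ together with the length bound $d_i r/(2\Gamma) \le p \le 2d_i r/\gamma$, where $\dist(\xi_{\nu,l}(a),\jrit_a)\sim e^{-r}$; on $[\nu+p,\nu']$, since by definition of the bound and free periods the orbit stays outside $U$ (and, by the basic assumption, inside $\NN_{\varepsilon_0}$), Lemma~\ref{oel} gives $\vert Df_a^m(\xi_{\nu+p,l}(a))\vert \ge C_\delta e^{\gamma_H m}$, with the stronger constant $C$ (not depending on $\delta$) at the terminal return into $U$.

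The main point is the bookkeeping at the return itself, where derivative is lost. At the return $\xi_{\nu,l}(a)\in U'_i$ one loses roughly a factor $\vert \xi_{\nu,l}(a)-c_i(a)\vert^{d_i-1}\sim e^{-(d_i-1)r}$ when passing through the critical point, but this is exactly what is recouped during the bound period: Lemma~\ref{length} tells us the net effect over the whole bound period of length $p$ is still growth at rate $\gamma/(2d_i)$ per step, and the key length estimate $p \ge d_i r/(2\Gamma)$ says $r \le 2\Gamma p/d_i$, so the loss $e^{-(d_i-1)r}$ is at most $e^{-(d_i-1)2\Gamma p/d_i}$, a bounded fraction of the $e^{\gamma_H \cdot(\text{free length})}$ that comes afterward once the free period is long enough — and for an intermediate $n$ landing \emph{inside} the bound period one uses directly that $E_j = \vert Df_a^j(\xi_{\nu+1,l}(a))\vert \ge C_0 e^{\gamma j}$ from \eqref{ej}, combined with the $\ge C_0 e^{\gamma\nu}$ accumulated up to the return, to see the running average exponent stays $\ge (9/10)\gamma$ provided $N$ (hence $\nu$) is large, since the additive constants and the finitely-bounded initial loss are absorbed. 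For $n$ in the free period, $\vert Df_a^n(v_l(a))\vert$ is the product of the pre-return factor ($\ge C_0 e^{\gamma(\nu-1)}$ by \eqref{EE1}), the full bound-period factor ($\ge e^{\gamma p/(2d_i)} \ge e^{\gamma r/(2\Gamma)} \cdot(\text{const})$ via $p\ge d_i r/2\Gamma$ — here one must be slightly careful and instead use that the bound period loses only what the immediately following free steps restore), and the free-period factor $\ge C_\delta e^{\gamma_H(n-\nu-p)}$; collecting exponents and using $n \le \nu'-1$ and $\gamma,\gamma_H$ comparable, the total exponent divided by $n$ exceeds $(9/10)\min\{\gamma,\gamma_H\}$ once $N$ is large enough to kill the constants $C_0, C_\delta, C_M$.

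I would therefore structure the argument as: (1) for $0 \le n \le \nu-1$, immediate from \eqref{EE1} since $\gamma \ge \gamma_I > \gamma_1$-target after renaming — actually here one should note $\gamma \ge (9/10)\gamma$ trivially, so nothing to prove; (2) for $\nu \le n \le \nu+p$, write $Df_a^n(v_l(a)) = Df_a^{n-\nu}(\xi_{\nu,l}(a))\cdot Df_a^{\nu}(v_l(a))$, bound the first factor below using the growth \eqref{ej} of $E_j \sim Df_a^j(v_i(a))$ together with the single loss $\vert\xi_{\nu,l}(a)-c_i(a)\vert^{d_i-1}$, and check the average; (3) for $\nu+p \le n \le \nu'-1$, multiply the bound-period output $Df_a^{p+1}(\xi_{\nu,l}(a)) \ge e^{\gamma p/(2d_i)}$ by the free-period expansion from Lemma~\ref{oel} and by the pre-return factor, then divide by $n$. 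The main obstacle I anticipate is case (2): the intermediate times that fall \emph{inside} the bound period are where the derivative is genuinely smallest (the critical-point dip has not yet been compensated), and one needs the precise shape $D_j \sim e^{-(d_i-1)r}E_{j-1}$ from the proof of Lemma~\ref{length} together with $r \lesssim \Gamma j$ (valid because we are past time $\nu + d_ir/2\Gamma$ only near the end) — so really one should prove the bound only for $n$ at or beyond the \emph{end} of the bound period and handle the interior of the bound period by the cruder estimate that $\vert Df_a^n(v_l(a))\vert \ge \vert Df_a^\nu(v_l(a))\vert \cdot e^{-\Gamma(n-\nu)} \ge C_0 e^{\gamma\nu - \Gamma p}$ and use $p \le 2d_i\alpha\nu/\gamma$ with $\alpha$ tiny to see this is still $\ge e^{(9/10)\gamma n}$. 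Making the constant $9/10$ come out (rather than, say, $1/2$) forces $\alpha$ to be chosen small relative to $\gamma/\Gamma$ and $N$ large, which is consistent with the standing conventions of the paper.
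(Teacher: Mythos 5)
Your overall plan — decomposing $[0,n]$ into the pre-return stretch, the bound period, and the free period, and then chaining \eqref{EE1}, Lemma~\ref{length} (via \eqref{ej}), and Lemma~\ref{oel} — is exactly the paper's route, and your case (3) (past the bound period) is handled just as the paper does. The description in your middle paragraph of case (2) is also essentially the paper's argument: write $\vert Df_a^{\nu+j}(v_l(a))\vert = \vert Df_a^{\nu-1}(v_l(a))\vert \cdot \vert Df_a(\xi_{\nu,l}(a))\vert \cdot E_j$, use $E_j \geq C_0 e^{\gamma j}$ from \eqref{ej} and the accumulated $e^{\gamma(\nu-1)}$ from \eqref{EE1}, and observe that the one-step loss at the return is absorbed into the running average. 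But the last paragraph, where you second-guess this and propose a replacement, goes wrong in two ways.

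First, the loss at the return is \emph{not} a ``finitely-bounded initial loss'': it is $\vert Df_a(\xi_{\nu,l}(a))\vert \sim e^{-(d_i-1)r}$, which can be arbitrarily small as $r$ grows. The paper controls it via the hypothesis $a\in\BB_{\nu,l}$: the basic assumption gives $e^{-r}\geq Ke^{-2\alpha\nu}$, hence $\vert Df_a(\xi_{\nu,l}(a))\vert \gtrsim e^{-2\alpha\hat d\nu}$, and this is then dominated by the accumulated $e^{\gamma(\nu-1)}$ precisely because $\alpha \ll \gamma$. The dip is absorbed by the \emph{history} $[0,\nu]$, not compensated inside the bound period, so the estimate is valid for all $1\leq j\leq p$ uniformly — your concern about ``intermediate $n$ where the dip has not yet been compensated'' does not arise, and the bound $p\geq d_ir/(2\Gamma)$ is not needed for this case. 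Second, the fallback ``cruder estimate'' $\vert Df_a^n(v_l(a))\vert \geq \vert Df_a^\nu(v_l(a))\vert\, e^{-\Gamma(n-\nu)}$ is false: $\Gamma$ is only an upper bound on $\log\vert Df_a\vert$, not a two-sided bound, so there is no uniform lower bound $\vert Df_a\vert\geq e^{-\Gamma}$ near critical points. Indeed the very first step of the bound period already has $\vert Df_a(\xi_{\nu,l}(a))\vert\sim e^{-(d_i-1)r}$, far smaller than $e^{-\Gamma}$ whenever $r\gg\Gamma$, so this alternative route cannot close the argument. The correct proof is the one you sketched in the middle paragraph, made quantitative by invoking $\BB_{\nu,l}$ to bound the single return loss by $e^{-2\alpha\hat d\nu}$.
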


\begin{proof}
Recall that $\hat{d}$ is the maximal multiplicity of critical points of $f_0$. Let $p$ be the length of the bound period for the return $\xi_{\nu,l}(a)$, and suppose $n = \nu + j$ with $1 \leq j \leq p$. By the chain rule, the fact that $a\in\EE_{\nu,l}(\gamma)$ and Lemma~\ref{length}, using the notation from the proof of Lemma~\ref{length}, 
\begin{align*}
\vert Df_{a}^{\nu +j}(v_{l}(a)) \vert &= \vert Df_{a}^{\nu -1}(v_l(a)) \vert D_{j+1} \\
&\gtrsim e^{\gamma(\nu -1)} \vert Df_a(\xi_{\nu}(a)) \vert E_j \\
&\gtrsim e^{\gamma(\nu - 1)} e^{-2\alpha \hat{d}\nu} e^{\gamma j} \\
&\geq e^{\gamma_1 (\nu + j)},
\end{align*}
by the choice of $\alpha$ and provided $N$ is large enough. If $n = \nu + p + j$, with $1 \leq j \leq L-1$, it also follows from Lemma~\ref{oel} (here with respect to $U'$) and the above that
\begin{align*}
\vert Df_{a}^{\nu + p + j}(v_{l}(a))\vert &= \vert Df_{a}^{\nu-1}(v_l(a))\vert D_{p+1} \vert Df_a^j(\xi_{\nu+p+1}(a)) \vert \\
&\gtrsim e^{\gamma(\nu-1)} e^{-2\alpha \hat{d} \nu} e^{\gamma p} C_{\delta'} e^{\gamma_H j} \\
&\geq e^{\gamma_1(\nu+p+j)},
\end{align*}
again provided that $N$ is large enough.
\end{proof}

By the weak parameter dependence (Lemma~\ref{weak_parameter_lemma}) and using similar methods as above we can see that parameters belonging to the same partition element repel each other in the following sense.

 \begin{Lem}\label{gro}
 Let $a,b\in\EE_{\nu,l}(\gamma)\cap\BB_{\nu,l}$ be in the same partition element for $\gamma\geq\gamma_I$. Let also $\nu$ be a return time for this partition element. If $\nu'$ is the next free return, then
\[
\vert \xi_{\nu',l}(a)-\xi_{\nu',l}(b) \vert \geq 2 \vert \xi_{\nu,l}(a)-\xi_{\nu,l}(b)\vert.
\]
\end{Lem}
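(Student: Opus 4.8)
The plan is to push the estimate from time $\nu$ to the next free return $\nu'$ using the weak parameter dependence lemma, and then to check that the expansion accumulated over the intervening bound and free periods dominates the distortion loss $Q^{\nu'-\nu}$. Write $p$ for the length of the bound period attached to the return $\xi_{\nu,l}(A)$ into $U'$, say into the component $U'_i$ with $d_i$ the multiplicity of $c_i$, and write $L$ for the length of the ensuing free period, so that $\nu'=\nu+p+L$. To apply Lemma~\ref{weak_parameter_lemma} at time $\nu'$ one checks its hypotheses: since $a,b\in\EE_{\nu,l}(\gamma)\cap\BB_{\nu,l}$ with $\gamma\ge\gamma_I$, Lemma~\ref{expgrow} gives $|Df_a^{n}(v_l(a))|\ge e^{\gamma_1 n}$ for all $0\le n\le\nu'-1$ with $\gamma_1\ge(9/10)\min\{\gamma,\gamma_H\}$, and since $\gamma\ge\gamma_I=2\gamma_L$ and $\gamma_H>6\gamma_L$ this $\gamma_1$ exceeds $(3/2)\gamma_L$, so hypothesis i) holds with $k_1=\nu'-1-N_L$ (provided $N$ is large enough); hypothesis ii) is exactly the partition-element bound of Definition~\ref{partition} for $A$, since $|\xi_{n,l}(a)-\xi_{n,l}(b)|\le\diam\xi_{n,l}(A)$ and one passes between $\dist(\xi_{n,l}(A),\jrit_{\QQ})$ and $\dist(\xi_{n,l}(a'),\jrit_{a'})$ using the lemmas on negligible parameter dependence. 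The second conclusion of Lemma~\ref{weak_parameter_lemma}, with $n=\nu'$ and $j=\nu'-\nu$, then produces a constant $c>0$ with
\[
|\xi_{\nu',l}(a)-\xi_{\nu',l}(b)|\ \ge\ c\,Q^{-(\nu'-\nu)}\,\big|Df_a^{\nu'-\nu}(\xi_{\nu,l}(a))\big|\,|\xi_{\nu,l}(a)-\xi_{\nu,l}(b)|,
\]
so everything reduces to bounding the derivative factor below by $2/c$.

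Next I would estimate $|Df_a^{\nu'-\nu}(\xi_{\nu,l}(a))|$ by the chain rule, splitting it into the bound part $Df_a^{p+1}(\xi_{\nu,l}(a))$ and the free part $Df_a^{L-1}(\xi_{\nu+p+1,l}(a))$. For the bound part Lemma~\ref{length} gives $|Df_a^{p+1}(\xi_{\nu,l}(a))|\ge e^{\gamma p/(2d_i)}$; the important point is that this is genuine net expansion, not merely a loss at the return followed by recovery. For the free part the orbit $\xi_{\nu+p+1,l}(a),\dots,\xi_{\nu'-1,l}(a)$ lies outside $U'$, hence outside $U$, so Lemma~\ref{oel}, \eqref{outside_no_hit}, gives $|Df_a^{L-1}(\xi_{\nu+p+1,l}(a))|\ge c_0\,e^{\gamma_H(L-1)}$ for a fixed $c_0\in(0,1]$ (when $L=1$ this factor is simply $1$). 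Therefore
\[
Q^{-(\nu'-\nu)}\,|Df_a^{\nu'-\nu}(\xi_{\nu,l}(a))|\ \ge\ c_0\,e^{-\gamma_H}\,\big(e^{\gamma/(2d_i)}/Q\big)^{p}\,\big(e^{\gamma_H}/Q\big)^{L}.
\]

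Finally I would fix the constants so that this last quantity exceeds $2/c$. Taking $N$ large makes $Q$ as close to $1$ as we wish, so $\log Q<\min\{\gamma/(2\hat d),\gamma_H\}$ and both bases above exceed $1$; bounding $(e^{\gamma_H}/Q)^{L}\ge1$ and using $d_i\le\hat d$, the right-hand side is at least $c_0\,e^{-\gamma_H}\big(e^{\gamma/(2\hat d)}/Q\big)^{p}$. Since the return is into $U'$ we have $\dist(\xi_{\nu,l}(a),\jrit_a)\lesssim\delta'=e^{-\Delta'}$, so Lemma~\ref{length} forces $p\gtrsim\Delta'/\Gamma$, and therefore the expression tends to infinity as $\Delta'\to\infty$; in particular it exceeds $2/c$ once $\Delta'$ is taken large enough in the construction. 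Combined with the first step this gives $|\xi_{\nu',l}(a)-\xi_{\nu',l}(b)|\ge2\,|\xi_{\nu,l}(a)-\xi_{\nu,l}(b)|$. The step I expect to require the most care is this last one: the net expansion over a bound-plus-free period is only mildly exponential, so for it to beat the accumulated distortion factor $Q^{\nu'-\nu}$ one must couple a near-$1$ choice of $Q$ (large $N$) with a small $\delta'$ (large $\Delta'$); the structural input that makes this go through is precisely that every return into $U'$ is automatically followed by a bound period of length comparable to $\Delta'$. A secondary technicality is checking hypothesis ii) of Lemma~\ref{weak_parameter_lemma} at the full time $\nu'$, which relies on Lemma~\ref{expgrow} together with the weak-dependence lemmas established just above.
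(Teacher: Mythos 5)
Your argument follows the paper's route closely: verify the hypotheses of Lemma~\ref{weak_parameter_lemma} via Lemma~\ref{expgrow}, decompose the phase derivative over $[\nu,\nu']$ into a bound factor (Lemma~\ref{length}) and a free factor (Lemma~\ref{oel}), and beat the distortion factor $Q^{\nu'-\nu}$ by taking $Q$ near $1$ and $\Delta'$ large. There is, however, a genuine gap in your treatment of the free factor. You invoke \eqref{outside_no_hit} and declare the resulting constant $c_0$ to be ``fixed''; but that constant is $C_{\delta'}$ when Lemma~\ref{oel} is applied (as it must be) with respect to $U'$, and $C_{\delta'}$ does depend on $\delta'$. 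Your closing step takes $\Delta'\to\infty$ to make $p$ large; this forces $\delta'\to0$, so $C_{\delta'}$ may deteriorate as fast as $(e^{\gamma/(2\hat{d})}/Q)^{p}$ grows, and the product need not tend to infinity. The paper sidesteps exactly this by noting that the free segment ends with the return $\xi_{\nu',l}(a)\in U'$; this is the hypothesis of \eqref{outside_hit} (with respect to $U'$), whose constant is, by the statement of Lemma~\ref{oel}, independent of $\delta'$, and the paper flags the point explicitly: ``Notice that the constant $C$ coming from the outside expansion lemma is not dependent on $\delta'$ since we have an actual return.'' Replacing \eqref{outside_no_hit} by \eqref{outside_hit} in your free-period estimate closes the gap and recovers the paper's proof.
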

 
\begin{proof}
By Lemma~\ref{expgrow} above we see that for $a\in\EE_{\nu,l}(\gamma)\cap\BB_{\nu,l}$ we have exponential growth of phase derivative up to the next free return $\nu'$, i.e.
\[
\vert Df^{n}_{a}(v_l(a)) \vert \geq C_1 e^{\gamma_1 n},
\] 
for $0 \leq n \leq \nu'-1$. Since $\gamma_1\geq (9/10)\min\{\gamma, \gamma_H\}$, one can use the weak parameter dependence property to get
\[
\vert \xi_{\nu',l}(a)-\xi_{\nu',l}(b)\vert \geq Q^{-(\nu'-\nu)} \vert Df_{a}^{\nu'-\nu}(\xi_{\nu,l}(a))\vert \vert \xi_{\nu,l}(a)-\xi_{\nu,l}(b) \vert.
\]
Since $\nu'-\nu = p+L$, with $p$ and $L$ being the associated bound period and free period, respectively, we get from Lemma~\ref{length} and Lemma~\ref{oel} that
\begin{align*}
\vert Df_a^{\nu'-\nu}(\xi_{\nu,l}(a)) \vert &= \vert Df_a^{p+1}(\xi_{\nu,l}(a)) \vert \vert Df_a^{L-1}(\xi_{\nu + p + 1}(a))\vert \\
&\geq C e^{\gamma p /(2\hat{d}) + \gamma_H(L-1)} \\
&\geq e^{\gamma_2 (\nu' - \nu)},
\end{align*}
for some $\gamma_2 > 0$, provided $\delta'$ is small enough (hence $p$ is large). Notice that the constant $C$ coming from the outside expansion lemma is not dependent on $\delta'$ since we have an actual return.
As $Q$ is chosen very small we have $\log Q<\alpha\ll\gamma_2$, and we get the conclusion.
\end{proof}

\subsection{Main distortion lemma (MDL)}
The following lemma is the main result of this section, and it tells us that we have strong distortion estimates for parameters belonging to the same partition element. An essential ingredient in the proof is the weak parameter dependence proved earlier (Lemma~\ref{weak_parameter_lemma}).
 
\begin{Lem}[Main distortion lemma]\label{md}
Let $\varepsilon'>0$. Then there exists $N$ large enough such that the following holds: If $A\subset\EE_{\nu,l}(\gamma)\cap\BB_{\nu,l}$ is a partition element for $\gamma\geq\gamma_I$ and $\nu \geq N$ is a return time or does not belong to the bound period, and $\nu'$ is the next free return, then we have
\begin{equation} 
\left\vert \frac{Df_{a}^{n}(v_l(a))}{Df_{b}^{n}(v_l(b))} -1\right \vert \leq\varepsilon'
\end{equation}
for $a,b\in A$ and for $\nu\leq n\leq\nu'$, provided $A$ is still a partition element at the time $n$.
\end{Lem}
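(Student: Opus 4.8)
The plan is to apply the second distortion estimate \eqref{dist_esti_2} of Lemma~\ref{dist_esti_0}, so that it suffices to bound the sum
\[
\Sigma = \sum_{j=0}^{n-1} \frac{\vert \xi_{\nu+j,l}(a) - \xi_{\nu+j,l}(b) \vert}{\dist(\xi_{\nu+j,l}(b), \jrit_b)}
\]
(reindexed so that $\xi_\nu$ plays the role of the base point, replacing $z = \xi_\nu(a)$, $w = \xi_\nu(b)$) by a quantity which tends to $0$ as $N\to\infty$; exponentiating then gives the $\varepsilon'$ bound. The hypotheses $A\subset\EE_{\nu,l}(\gamma)\cap\BB_{\nu,l}$ and $A$ a partition element at all intermediate times are exactly what is needed to verify the assumptions i)-ii) of Lemma~\ref{weak_parameter_lemma} (the Collet--Eckmann growth of $\vert Df_a^n(v_l(a))\vert$ follows from $\EE_{\nu,l}(\gamma)$ together with the bound-period expansion of Lemma~\ref{expgrow} and Lemma~\ref{length}, and the partition-element condition is precisely ii)), so the machinery is available.

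The key step is to split the orbit segment $\nu \leq \nu+j \leq n$ into free periods and bound periods and estimate the corresponding blocks of $\Sigma$ separately. For a \emph{free} stretch, the partition-element definition (Definition~\ref{partition}) forces $\vert \xi_{\nu+j,l}(a)-\xi_{\nu+j,l}(b)\vert \lesssim \dist(\xi_{\nu+j,l}(b),\jrit_{\QQ})/(\log\dist(\cdot))^2$ when the orbit is in $U$, and $\lesssim S$ when it is outside, so each free term contributes at most $\OO(1/(\log\dist)^2)$ or $\OO(S/\delta) = \OO(\epsilon_1)$; the contribution of free returns is geometrically summable because consecutive free returns have exponentially growing derivative (Lemma~\ref{oel}), so the distances grow exponentially and the denominators are bounded below by the basic assumption in $\BB_{\nu,l}$, giving a geometric series with small ratio. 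For a \emph{bound} period attached to a return at time $\nu + m$ to some $U'_i$ with $\dist \sim e^{-r}$, I would use the binding inequality (Definition~\ref{ptbp}), which directly gives $\vert f_a^{j}(z)-\xi_{j,i}(b)\vert \leq e^{-\alpha j}\dist(\xi_{j,i}(b),\jrit_b)$ for $z\in\xi_{\nu+m,l}(A)$, so the bound-period terms form a convergent geometric sum $\sum_j e^{-\alpha j} = \OO(1/\alpha)$ — but one must be careful: this controls $\vert f_a^j(z)-\xi_{j,i}(b)\vert$, not $\vert \xi_{\nu+m+j,l}(a)-\xi_{\nu+m+j,l}(b)\vert$ directly, so I would insert the comparison with $\xi_{j,i}(b)$ and absorb the analytic motion of $c_i$ using the negligibility of $\vert c_i(a)-c_i(b)\vert$ established after Lemma~\ref{weak_parameter_lemma}. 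The leading bound-period term is controlled via \eqref{local}-type estimates by $C_2 e^{-d_i r}/(K e^{-3\alpha \cdot 1})$, which is summable over the returns since the returns have exponentially increasing times (so $r$ is at least of order $\Delta'$ at the first return, and subsequent returns contribute an even smaller, geometrically decaying amount).

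The main obstacle is the bookkeeping that shows the \emph{total} over all returns up to time $n$ is small: each individual return contributes $\OO(e^{-c\Delta'}) + \OO(\epsilon_1) + \OO(1/(\log\delta)^2)$, but there can be many returns, so one needs the standard Benedicks--Carleson observation that free periods grow (Lemma~\ref{expgrow} gives exponential derivative growth between consecutive free returns, and the bound-period length $p \sim d_i r/\Gamma$ by Lemma~\ref{length}), which forces the sequence of return times to grow at least geometrically; hence the number of returns up to time $n$ is $\OO(\log n)$ and, more importantly, the sizes $\vert \xi_{\nu+j,l}(a)-\xi_{\nu+j,l}(b)\vert$ at successive returns grow by a definite factor (Lemma~\ref{gro}), so the return contributions form a convergent geometric series whose sum is $\OO(1)$ times the first term. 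Taking $N$ large forces $\nu \geq N$ large, which (through Lemma~\ref{weak_parameter_lemma}'s constant $Q\to 1$ and through the fact that the first return after time $N$ is deep) makes the whole of $\Sigma$ as small as desired, and $\exp(C\Sigma) - 1 \leq \varepsilon'$ follows. I would also note that the hypothesis ``$\nu$ is a return time or does not belong to a bound period'' is what guarantees the decomposition starts cleanly at a free point, so no partial bound block at the left endpoint needs separate treatment.
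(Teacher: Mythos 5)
Your overall strategy — apply the distortion estimate \eqref{dist_esti_2}, split the orbit history into bound and free blocks, and control each block using the partition-element condition together with the geometric growth of the spread at successive free returns (Lemma~\ref{gro}) — is exactly the paper's approach. However, your treatment of the bound-period contribution has a genuine gap. You argue that the binding inequality gives a convergent geometric sum $\sum_j e^{-\alpha j} = \OO(1/\alpha)$ per bound period; but $1/\alpha$ is \emph{large} (since $\alpha$ is small), so this estimate is useless by itself, and your proposed fix of ``inserting the comparison with $\xi_{j,i}(b)$ and absorbing the analytic motion of $c_i$'' does not address the real issue. The binding condition controls the distance $\lv f_a^j(z) - \xi_{j,i}(b) \rv$ of the orbit to the critical orbit, whereas the distortion sum needs the \emph{parameter spread} $\lv \xi_{\nu_k+j,l}(a) - \xi_{\nu_k+j,l}(b) \rv$, and these differ by a scale factor. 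The crucial step the paper performs is to pull out the factor $\lv \xi_{\nu_k,l}(a) - \xi_{\nu_k,l}(b)\rv/e^{-r}$ at the return time: since the spread and the distance $\lv \xi_{\nu_k+j,l}(a) - \xi_{j,i}(a)\rv$ both grow like $\lv Df_a^j(\xi_{\nu_k,l}(a))\rv$ during the bound period (Lemma~\ref{disbound}), one gets
\[
\Upsilon_B^k \lesssim \frac{\lv \xi_{\nu_k,l}(a) - \xi_{\nu_k,l}(b)\rv}{e^{-r}}\Bigl(1 + \sum_{j\geq 1} e^{-\alpha j}\Bigr) \lesssim \frac{\lv \xi_{\nu_k,l}(a) - \xi_{\nu_k,l}(b)\rv}{e^{-r}} \lesssim \frac{1}{r^2},
\]
where the final bound comes from the partition-element condition at the return. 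You invoke the partition-element bound $\lesssim 1/(\log\dist)^2$ only for the free terms; without also using it in the bound-period blocks the total is not small. The sum over $r \geq \Delta$ of $1/r^2$, after organizing returns by depth via $K(r)$ and $\hat{k}(r)$ and using Lemma~\ref{gro}, gives $\lesssim 1/\Delta$, which is the actual source of smallness.

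Two secondary points. First, you do not treat the tail between the last free return $\nu_s = \nu$ and the time $n$, in particular the possible pseudo-returns into $U' \setminus U$; the paper handles this separately as $\Upsilon_T$, and this is where the large scale $S = \varepsilon_1 \delta$ and the constant $\varepsilon_1$ enter. Second, your claim that the number of returns up to time $n$ is $\OO(\log n)$ is neither correct (bound periods have length $\gtrsim \Delta/\Gamma$, so the number of returns is linear in $n$) nor what is used: the relevant bookkeeping is that among returns at a fixed depth $r$ the contributions form a geometric series dominated by the last one, and one then sums $1/r^2$ over $r \geq \Delta$. Finally, note that the sum you write down starts at time $\nu$, but the lemma controls the full ratio $Df_a^n(v_l(a))/Df_b^n(v_l(b))$, so the sum must run from the critical value, i.e.\ over all free returns $\nu_1 < \cdots < \nu_s = \nu$, not only over the segment beyond $\nu$.
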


\begin{proof}
By Lemma~\ref{weak_parameter_lemma} and Lemma~\ref{dist_esti_0}, it reduces to check whether the following sum can be made arbitrarily small:
\begin{equation}
\Upsilon:=\sum_{j=1}^{n-1}\frac{\vert \xi_{j,l}(a)-\xi_{j,l}(b) \vert}{\dist(\xi_{j,l}(b), \jrit_b)}.
\end{equation}
Let $(\nu_k)$ be the free returns before time $n$, where $k\leq s$. In other words, $\nu=\nu_s$ and $\nu'=\nu_{s+1}$. Let also $p_k$ be the length of the associated bound period of the return $\nu_k$. The estimate of $\Upsilon$ is divided into several parts:
\begin{align*}
\Upsilon=\sum_{k=1}^{s}\sum_{j=\nu_{k-1}}^{\nu_{k-1}+p_{k-1}}\frac{\vert \xi_{j,l}(a)-\xi_{j,l}(b)\vert}{\dist(\xi_{j,l}(b), \jrit_b)}&+\sum_{k=1}^{s}\sum_{j=\nu_{k-1}+p_{k-1}+1}^{\nu_{k}-1}\frac{\vert \xi_{j,l}(a)-\xi_{j,l}(b)\vert}{\dist(\xi_{j,l}(b), \jrit_b)}\\
&+\sum_{j=\nu_s}^{n-1}\frac{\vert \xi_{j,l}(a)-\xi_{j,l}(b)\vert }{\dist(\xi_{j,l}(b), \jrit_b)}\\
&=:\Upsilon_B + \Upsilon_F + \Upsilon_T.
\end{align*}
Here $\Upsilon_B$ denotes the contribution from bound periods, while $\Upsilon_F$ the contribution from free periods, and $\Upsilon_T$ the contribution from the last return $\nu_s$ up until time $n$.

\medskip
\noindent{\emph{Contribution from bound periods: the estimate of} $\Upsilon_B$.} Let $\nu_k$ be one of the free returns, with $k\leq s-1$. We would like to estimate the following
\[
\Upsilon_{B}^{k}:=\sum_{j=\nu_k}^{\nu_k+p_k}\frac{\vert \xi_{j,l}(a)-\xi_{j,l}(b) \vert}{\dist(\xi_{j,l}(b), \jrit_b)}=\sum_{j=0}^{p_k}\frac{\vert \xi_{\nu_k+j,l}(a)-\xi_{\nu_k+j,l}(b) \vert}{\dist(\xi_{\nu_k+j,l}(b), \jrit_b)}.
\]
Assume also that $\xi_{\nu_k,l}(a)\in U'_i$ is a return and $\dist(\xi_{\nu_k,l}(b), \jrit_b)\sim e^{-r}$. It then follows from the distortion in Lemma~\ref{disbound} and the definition of bound periods that
\begin{align*}
\Upsilon_{B}^{k}&\leq \frac{\vert \xi_{\nu_k,l}(a) - \xi_{\nu_k,l}(b)\vert}{e^{-r}} \left(1 + \sum_{j=1}^{p_k}\frac{\vert Df_{a}^{j}(\xi_{\nu_k,l}(a))\vert e^{-r}}{\dist(\xi_{\nu_k+j,l}(b), \jrit_b)} \right)\\
&\lesssim \frac{\vert \xi_{\nu_k,l}(a) - \xi_{\nu_k,l}(b)\vert}{e^{-r}}\left(1 + \sum_{j=1}^{p_k}\frac{\vert Df_{a}^{j}(\xi_{\nu_k,l}(a))\vert \vert \xi_{\nu_k, l}(a)-c_i(a)\vert }{\dist(\xi_{\nu_k+j,l}(b), \jrit_b)}\right)\\
&\lesssim \frac{\vert \xi_{\nu_k,l}(a) - \xi_{\nu_k,l}(b)\vert}{e^{-r}}\left(1 + \sum_{j=1}^{p_k}\frac{\vert \xi_{\nu_k+j,l}(a)-\xi_{j,i}(a)\vert}{\dist(\xi_{\nu_k+j,l}(b), \jrit_b)} \right)\\
&\lesssim \frac{\vert \xi_{\nu_k,l}(a) - \xi_{\nu_k,l}(b)\vert}{e^{-r}}\left(1 +  \sum_{j=1}^{p_k} e^{-\alpha j}\right)\\
&\lesssim \frac{\vert \xi_{\nu_k,l}(a) - \xi_{\nu_k,l}(b)\vert}{e^{-r}}.
\end{align*}
Given $r\geq\Delta$, let $K(r)$ be the set of indices $k$ such that $\dist(\xi_{\nu_k,l}(A), \jrit_A) \sim e^{-r}$, and let $\hat{k}(r)$ be the largest index contained in $K(r)$. Then it follows from Lemma~\ref{gro} that
\begin{equation}
\Upsilon_B=\sum_{k=1}^{s-1}\Upsilon_{B}^{k}\leq\sum_{r\geq\Delta}\sum_{k\in K(r)}\Upsilon_{B}^{k}\lesssim \sum_{r\geq\Delta}\Upsilon_{B}^{\hat{k}(r)} \lesssim \sum_{r\geq\Delta} \frac{1}{r^2}\lesssim \frac{1}{\Delta},
\end{equation}
where we used that for the last return associated with this $r$
\[
\vert \xi_{\nu_{\hat{k}(r)},l}(a) - \xi_{\nu_{\hat{k}(r)},l}(b)\vert \lesssim \frac{e^{-r}}{r^2}.
\]

\medskip
\noindent{\emph{Contribution from free periods: the estimate of} $\Upsilon_F$.} Similar as above, we define for $k\leq s-1$,
\[
\Upsilon_{F}^{k}:=\sum_{j=\nu_k+p_k+1}^{\nu_{k+1}-1}\frac{\vert \xi_{j,l}(a)-\xi_{j,l}(b)\vert}{\dist(\xi_{j,l}(b), \jrit_b)}.
\]
By the weak parameter dependence and Lemma~\ref{oel} we see that
\begin{align*}
\vert \xi_{\nu_{k+1},l}(a)-\xi_{\nu_{k+1},l}(b)\vert &\geq \frac{1}{Q^{\nu_{k+1}-j}} \vert Df_{a}^{\nu_{k+1}-j}(\xi_{j,l}(a))\vert \xi_{j,l}(a)-\xi_{j,l}(b)\vert \\
&\gtrsim \left(\frac{e^{\gamma_H}}{Q}\right)^{\nu_{k+1}-j} \vert \xi_{j,l}(a)-\xi_{j,l}(b) \vert
\end{align*}
for $\nu_{k}+p_k+1\leq j\leq \nu_{k+1}-1$. Since $\nu_{k+1}$ is the index of a return, we assume that $\dist(\xi_{\nu_{k+1},l}(b), \jrit_b)\sim e^{-r}.$ So we see that, for $\nu_{k}+p_k+1\leq j\leq \nu_{k+1}-1$,
\begin{align*}
\vert \xi_{j,l}(a)-\xi_{j,l}(b)\vert \lesssim \left(\frac{Q}{e^{\gamma_H}}\right)^{\nu_{k+1}-j}\vert \xi_{\nu_{k+1},l}(a) - \xi_{\nu_{k+1},l}(b)\vert .
\end{align*}
Since $\dist(\xi_{j,l}(b),\jrit_b) \geq \dist(\xi_{\nu_{k+1},l}(b),\jrit_b) \sim e^{-r}$ this gives
\[
\Upsilon_{F}^{k}\lesssim \frac{\vert \xi_{\nu_{k+1},l}(a) - \xi_{\nu_{k+1},l}(b)\vert}{e^{-r}} \sum_{j=\nu_{k}+p_k+1}^{\nu_{k+1}-1}\left(\frac{Q}{e^{\gamma_H}}\right)^{\nu_{k+1}-j}\lesssim \frac{\vert \xi_{\nu_{k+1},l}(a) - \xi_{\nu_{k+1},l}(b)\vert}{e^{-r}},
\]
where we have used the fact that $\log Q$ is much smaller than $\gamma_H$. Using the same argument as in the estimate of the contribution from the bound periods, we find that
\[
\Upsilon_F=\sum_{k=1}^{s-1}\Upsilon_{F}^{k}\leq\sum_{r\geq\Delta}\sum_{k\in K(r)}\Upsilon_{F}^{k}\lesssim \sum_{r\geq\Delta}\Upsilon_{F}^{\hat{k}(r)} \lesssim \sum_{r\geq\Delta} \frac{1}{r^2}\lesssim \frac{1}{\Delta}.
\]

\medskip
\noindent{\emph{Estimate of tail} $\Upsilon_T$.} It remains to estimate the sum between the last free return $\nu_{s}$ up to time $n$. As $\nu_s\leq n\leq \nu_{s+1}$, we need to consider the different situations that can occur in this time interval. If $n\leq \nu_s+p_s$ (i.e. $n$ belongs to the bound period immediately after $\nu_s$), then the tail $\Upsilon_T$ can be estimated in the same as $\Upsilon_B$ by reducing to the return at time $\nu_{s}$. If $n=\nu_{s+1}$, then the tail consists of a bound period following $\nu_s$ and a free period before the return $n=\nu_{s+1}$ happens. In this case, the estimate $\Upsilon_T$ can be estimated again as above.

The remaining case is when $\nu_{s}+p_s+1\leq n< \nu_{s+1}$. For this purpose, we consider pseudo-returns, and we let $\nu_{s}+p_s+1\leq q_1\leq\cdots\leq q_t\leq n$ be the indices of these returns. By definition, $\xi_{q_k,l}(a)\cap U'\neq\emptyset$ and $\xi_{q_k,l}(a)\cap U=\emptyset$. For pseudo-returns, bound periods and free periods are defined in a similar way. As in the previous estimates, the contribution to the distortion between any two pseudo-returns of index $q_k$ and $q_{k+1}$ is a constant times
\[
\frac{\vert \xi_{q_k,l}(a) - \xi_{q_k,l}(b) \vert }{e^{-r_k}} + \frac{\vert \xi_{q_{k+1},l}(a) - \xi_{q_{k+1},l}(b) \vert}{e^{-r_{k+1}}},
\]
where $r_k$ and $r_{k+1}$ such that $\dist(\xi_{q_k,l}(b), \jrit_a) \sim e^{-r_k}$ and $\dist(\xi_{q_{k+1},l}(b), \jrit_a) \sim e^{-r_{k+1}}$. The difference here is that, at a pseudo-return, the only thing we know about the length of our interval is that $\vert \xi_{q_k,l}(a) - \xi_{q_k,l}(b) \vert \leq S$, where $S = \varepsilon_1 \delta$ is the large scale. With similar methods and notation used for estimating the bound and free contributions, we have
\begin{align*}
\Upsilon_T &= \left(\sum_{j = \nu_s}^{q_1} + \sum_{k = 1}^{t-1} \sum_{j = q_k}^{q_{k+1}-1} + \sum_{j = q_t}^{n-1}\right) \frac{ \vert \xi_{j,l}(a) - \xi_{j,l}(b)\vert}{\dist (\xi_{j,l}(b),\jrit_b)} \\
&\lesssim \frac{1}{r_s^2} + \sum_{k = 1}^t \frac{\vert \xi_{q_k,l}(a) - \xi_{q_k,l}(b)\vert}{\dist(\xi_{q_k,l}(b),\jrit_b)} + \sum_{j = q_t}^{n-1} \frac{ \vert \xi_{j,l}(a) - \xi_{j,l}(b)\vert}{\dist (\xi_{j,l}(b),\jrit_b)} \\
&\lesssim \frac{1}{\Delta^2} + \sum_{r = \Delta'}^\Delta \frac{\vert \xi_{q_{\hat{k}(r),l}}(a) -  \xi_{q_{\hat{k}(r),l}}(b) \vert}{\dist(\xi_{q_{\hat{k}(r)},l}(b),\jrit_b)} + \frac{S}{\delta'} \\
&\lesssim \frac{1}{\Delta^2} + \varepsilon_1 \sum_{r = \Delta'}^\Delta e^{r - \Delta} + \varepsilon_1 \\
&\lesssim \frac{1}{\Delta^2} + \varepsilon_1,
\end{align*}
where we in the sum from $q_t$ to $n-1$ used Lemma~\ref{oel} (inequality \eqref{outside_no_hit}, now with respect to $U'$) and that $\dist(\xi_{j,l}(b),\jrit_b) > \delta' > \delta$ during this time.

Combining all these estimate above we arrive at 
\[
\Upsilon = \sum_{j=1}^{n-1} \frac{\vert \xi_{j,l}(a) - \xi_{j,l}(b) \vert}{\dist(\xi_{j,l}(b),\jrit_b)} \lesssim \frac{1}{\Delta} + \varepsilon_1,
\]
and if $\delta$ and $\varepsilon_1$ are small enough, we reach the desired conclusion of strong distortion.
\end{proof}

\subsection{Consequences of MDL}

With Lemma~\ref{md} in hand, we can conclude that the previously obtained weak parameter dependence of Lemma~\ref{weak_parameter_lemma} can be promoted to a stronger form:
\begin{equation}
\vert \xi_{n+j,l}(a)-\xi_{n+j,l}(b) \vert \sim \vert Df^{n}_{a}(\xi_{j,l}(a))\vert \vert \xi_{j,l}(a)-\xi_{j,l}(b)\vert,
\end{equation}
provided that $a$ and $b$ belong to the same partition element.

Another direct consequence of Lemma~\ref{md} is that for a sufficiently small parameter square $\QQ$, we have the following dichotomy for each critical point $c_l$: there exists $N_l$ such that either $\xi_{N_l,l}(\QQ)$ grows to some definite size or $\xi_{N_l,l}(\QQ)$ is the first essential return.

\begin{Lem}\label{startup}
Let $f_0$ be a slowly recurrent Collet--Eckmann rational map. Let $N_L$ be as in Lemma~\ref{levin}, and let $\varepsilon'>0$ be sufficiently small. Then there is a neighbourhood $U$ of $\jrit_0$ and $S>0$ (depending on $U$) such that, for each sufficiently small $\varepsilon>0$ and for each critical point $c_l(0)\in\jrit_0$, there is $N_l\geq N_L$ such that for all $a\in\QQ$ we have the following:
\begin{itemize}
\item[(i)] For some $\gamma_l\geq\gamma_0(1-\varepsilon')$, one has
\[
\left \vert Df_{a}^{k}(v_{l}(a))\right \vert \geq Ce^{\gamma_l k}\quad\text{for}\quad k\leq N_l-1;
\]

\item[(ii)] for $k\leq N_l-1$, one has
\begin{equation}
\diam\xi_{k,l}(\QQ)\leq
\begin{cases}
~\,~\dfrac{\dist(\xi_{k,l}(\QQ), \jrit_{\QQ})}{(\log \dist(\xi_{k,l}(\QQ), \jrit_{\QQ}))^2},~&\text{if}~\xi_{k,l}(\QQ)\cap U\neq \emptyset,\\
\quad~\,~S, ~&\text{if}~\xi_{k,l}(\QQ)\cap U= \emptyset;
\end{cases}
\end{equation}

\item[(iiii)] for $k=N_l$, one has
\begin{equation}
\diam\xi_{k,l}(\QQ)\geq
\begin{cases}
~\,~\dfrac{\dist(\xi_{k,l}(\QQ), \jrit_{\QQ})}{(\log \dist(\xi_{k,l}(\QQ), \jrit_{\QQ}))^2},~&\text{if}~\xi_{k,l}(\QQ)\cap U\neq \emptyset,\\
\quad~\,~S, ~&\text{if}~\xi_{k,l}(\QQ)\cap U= \emptyset;
\end{cases}
\end{equation}

\item[(iv)] for all $a,b\in\QQ$ one has
\[
\left\vert \frac{Df_{a}^{n-N}(\xi_{N,l}(a))}{Df_{a}^{n-N}(\xi_{N,l}(b))}-1 \right \vert\leq\varepsilon' \quad\text{for}\quad n\leq N_l.
\]
\end{itemize}
\end{Lem}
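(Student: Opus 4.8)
The plan is to run the evolution of each critical point $c_l(0)$ forward from the starting map $f_0$ and use a continuity/compactness argument to transfer the Collet--Eckmann growth of $f_0$ to the whole square $\QQ$ for a bounded time, stopping precisely at the first moment the partition rule of Definition~\ref{partition} is violated. First I would fix $\varepsilon'>0$ and invoke Lemma~\ref{levin} to obtain $N_L$ and $\varepsilon_L$; then, for each $l$, since $f_0$ is Collet--Eckmann with exponent $\gamma_0$, the orbit $\xi_{k,l}(0)=f_0^k(v_l(0))$ satisfies $|Df_0^k(v_l(0))|\ge C_0 e^{\gamma_0 k}$ for all $k$. By continuity of $f_a$ and its derivatives in $(a,z)$, for any finite horizon $M$ there is $\varepsilon>0$ small so that $|Df_a^k(v_l(a))|\ge C e^{\gamma_l k}$ for all $a\in\QQ$ and $k\le M$, with $\gamma_l\ge\gamma_0(1-\varepsilon')$; this is the mechanism behind (i). The subtlety is that $M$ cannot be chosen uniformly: we must let the construction itself pick $N_l$.

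Next I would define $N_l$ as the first time $k$ at which the diameter bound in Definition~\ref{partition} fails for $A=\QQ$, i.e. the first $k$ with
\[
\diam\xi_{k,l}(\QQ)>
\begin{cases}
\dfrac{\dist(\xi_{k,l}(\QQ),\jrit_{\QQ})}{(\log\dist(\xi_{k,l}(\QQ),\jrit_{\QQ}))^2} & \text{if }\xi_{k,l}(\QQ)\cap U\ne\emptyset,\\
S & \text{if }\xi_{k,l}(\QQ)\cap U=\emptyset.
\end{cases}
\]
This makes (ii) and (iii) true by definition, provided such a $k$ exists and provided $N_l\ge N_L$. That $N_l$ is finite follows because up to time $N_l-1$ the square $\QQ$ is a partition element, hence $\xi_{N_l-1,l}$ is almost affine on it (this is where Lemma~\ref{md} and its consequence on weak parameter dependence enter), so $\diam\xi_{k,l}(\QQ)\gtrsim |Df_0^{k-1}(v_l(0))|\cdot|\QQ|\gtrsim e^{\gamma_l(k-1)}\varepsilon$, which grows exponentially and must eventually exceed $S$ (or the distance-dependent bound near $\jrit$, which cannot be beaten forever either since that bound is at most of order $\delta'$). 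To see $N_l\ge N_L$, shrink $\varepsilon$: for $k\le N_L$ the image $\xi_{k,l}(\QQ)$ has diameter $\lesssim e^{\Gamma k}\varepsilon\le e^{\Gamma N_L}\varepsilon$, which is $<S$ and $<$ the distance bound once $\varepsilon$ is small enough relative to $N_L$, $S$, $\delta'$. Then (iv) is exactly the Main Distortion Lemma (Lemma~\ref{md}) applied with $\nu=N$ on the partition element $\QQ$, which is legitimate for all $n\le N_l$ since $\QQ$ remains a partition element throughout this range and, by (i), $\QQ\subset\EE_{n,l}(\gamma_l)\cap\BB_{n,l}$ with $\gamma_l\ge\gamma_I$ (using $\gamma_l\ge\gamma_0(1-\varepsilon')$ and the earlier choice $\gamma_I=2\gamma_L<\gamma_0$).

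The order of quantifiers is the delicate point and should be handled carefully: $U$ and $S$ are chosen first (depending only on $f_0$, $\Delta$, $\Delta'$), then for each candidate horizon one uses Lemma~\ref{md}, Lemma~\ref{levin}, Lemma~\ref{oel}, Lemma~\ref{length} to get the needed estimates, and only after the bound on $N_l$ (in terms of $S$, $\delta'$, $\gamma_l$, $\Gamma$, and $\varepsilon$) is extracted does one shrink $\varepsilon$ enough that all the finitely many continuity estimates up to time $N_l$ hold simultaneously. The main obstacle is this circularity — $N_l$ depends on $\varepsilon$ through the initial size of $\QQ$, while the continuity estimates that justify (i) and (iv) up to time $N_l$ also depend on $\varepsilon$ — so one must give an a priori upper bound $N_l\le N_l(\varepsilon)$ that is, say, $O(\log(1/\varepsilon))$ (from exponential growth of $\diam\xi_{k,l}(\QQ)$ against the fixed large scale $S$), and then check that the continuity and distortion inputs degrade slowly enough (polynomially or with controlled exponential rate in the horizon) that they still deliver $\gamma_l\ge\gamma_0(1-\varepsilon')$ and distortion $\le\varepsilon'$ over a window of length $O(\log(1/\varepsilon))$ once $\varepsilon\to 0$. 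This is precisely the kind of start-up estimate carried out in \cite{MA7}, and the proof here should follow that template.
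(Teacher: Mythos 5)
Your proposal follows essentially the same route as the paper's (rather terse) proof: define $N_l$ as the first failure time of the partition rule for $A=\QQ$, take (ii) and (iii) as definitional, obtain (i) by inherited expansion from $f_0$ via continuity and the partition-element bound, and obtain (iv) by applying Lemma~\ref{md} iteratively from $N_L$ up to $N_l$. The only slight difference in emphasis is how the circularity you flag is resolved: the paper does not invoke an a priori bound $N_l = O(\log(1/\varepsilon))$ with ``slowly degrading'' continuity estimates, but instead relies on the bootstrap character of Lemma~\ref{md} --- once $\varepsilon$ is small enough to reach time $N_L$ with the partition property and the starting exponent, the distortion estimate is self-propagating with a fixed error $\varepsilon'$ (not growing in $k$) for as long as (ii) holds, so no further shrinking of $\varepsilon$ in terms of $N_l$ is needed.
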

 
 \begin{proof}
By the choice of $N_L$, we can choose $\varepsilon>0$ sufficiently small such $\QQ\subset\EE_{N_L,l}(\gamma)\cap\BB_{N_L,l}$ for all $l$ and for some $\gamma$ arbitrarily close to $\gamma_0$.

Now we fix any $c_l(0)\in\jrit_0$ and assume that $(ii)$ is always satisfied up to some time, denoted by $N_l-1$. Then this implies that all other parameters in $\QQ$ will inherit expansion from our starting map:
\[
\left\vert Df_{a}^{k}(v_l(a)) \right\vert \geq C^{-k}e^{\gamma_0 k}\geq C_0 e^{\gamma_1 k}.
\]
Since we assumed that $(ii)$ is satisfied, we see that all parameters will be slowly recurrent up to time $N_l-1$. Then by the definition of partition element we can use Lemma~\ref{md} repeatedly starting from the time $N_L$ up to $N_l-1$ to get the distortion claimed in $(iv)$.
\end{proof}

To prove our main result, we would like to see if a small parameter square will grow to the large scale $S$ under the action of $\xi_{n,l}$. For this purpose, let $N_l$ be as in the above lemma and suppose without loss of generality that $N_1\leq N_2\leq\cdots$. Then by Lemma~\ref{startup}, we have the situation that $\xi_{N_1,1}(\QQ)$ either reaches the large scale $S$ or is the first essential return. If the first case happens, we stop and consider the next critical point. If the second case occurs, we partition the parameter square $\QQ$ into small dyadic squares inductively as follows. Since the partition rule should be valid for all returns, let us consider a given partition element $A \subset \QQ$ (instead of $\QQ$), which is assumed to be a perfect square. So suppose that $\xi_{n,l}(A)$ is an essential return, and $A$ is not a partition element according to Definition~\ref{partition}, then partition $A$ into four perfect squares $A_{j_1} \subset A$, $j_1=1,2,3,4$  of equal size. If each of these subsquares satisfies Definition~\ref{partition}, then stop. If not, for each subsquare $A_{j_1} \subset A$ that is not a partition element, partition $A_{j_1}$ into four new subsquares $A_{j_1 j_2}$, $j_2=1,2,3,4$  of equal size. If they are partition elements, then stop. Otherwise go on until all subsquares are partition elements. In this way we obtain a partition of $A$ into subsquares of possibly different sizes of the form $2^{-k}$ times the side length of $A$. We get a collection of dyadic subsquares $A_{n}^{i} \subset A$ such that $A=\cup_i A_{n}^{i}$ and
\[
  \frac{1}{3} \frac{\dist(\xi_{n,l}(A_{n}^{i}),\jrit_{A_n^i})}
  {(\log \dist(\xi_{n,l}(A_{n}^{i}),\jrit_{A_n^i}))^2}\leq\diam\xi_{n,l}(A_{n}^{i})\leq\frac{\dist(\xi_{n,l}(A_{n}^{i}),\jrit_{A_n^i})}{(\log \dist(\xi_{n,l}(A_{n}^{i}),\jrit_{A_n^i}))^2}.
\]
By construction, each $A_{n}^{i}$ is a partition element, as defined in ~\ref{partition} (the constant $1/3$ is chosen because of small distortion; in an completely affine situation, $1/2$ would suffice). At this point we will need to delete parameters which violate the basic assumption. But it turns out that these deleted parameters constitute only a small portion. After (possibly) deleting parameters not satisfying the basic assumption, we continue to iterate each partition element individually.

\section{Large deviations and escape of partition elements}
We now consider a partition element in $\QQ$ and follow it in a time window of the type $[m,(1+\io) m]$, for some (small) $\io > 0$. This section is very similar to \cite{BC2} where the original ideas were developed. See also \cite{MA-Z,MA7}. Let $A_n(a) \subset \QQ$ be a partition element at time $n$, containing the parameter $a$. Since the proofs are very similar to these earlier papers, we are not going through all the proofs here again but instead give references. 

\begin{Def}
We say that $\xi_{n,l}(A_n(a))$ has escaped or is in escape position, if $n$ does not belong to a bound period and $\diam(\xi_{n,l}(A_n(a))) \geq S$ before partitioning. We also speak of escape situation for $A_n(a)$ and say that $A_n(a)$ has escaped if $\xi_{n,l}(A_n(a)) $ has escaped. 
  \end{Def}

  The first observation is that the measure of parameters deleted between two consecutive essential returns is exponentially small in terms of the return time of the former return. See Lemma~8.1 in \cite{MA7}. 

  \begin{Lem} \label{basic-param}
    Let $\xi_{\nu,l}(A)$ be an essential return, $A \subset \EE_{\nu,l}(\ga_I) \cap \BB_{\nu,l}$ and let $\xi_{\nu'}(A)$ be the next essential return. Then if $\hat{A}$ is the set of parameters in $A$ that satisfy the basic assumption at time $\nu'$, we have
    \[
m(\hat{A}) \geq (1-e^{-\al \nu})m(A). 
      \]
    \end{Lem}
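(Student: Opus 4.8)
The plan is to estimate, for each essential return between $\nu$ and $\nu'$, the measure of parameters in $A$ that are forced out of the partition element because they violate the basic assumption \eqref{BA}, and then to sum these contributions. The key geometric input is the main distortion lemma (Lemma~\ref{md}), which says that $\xi_{n,l}$ is essentially affine on each partition element, together with the transversality/weak parameter dependence (Lemmas~\ref{levin}, \ref{weak_parameter_lemma}) and the expansion at free returns (Lemma~\ref{gro}), which says that the images $\xi_{\nu_k,l}(A)$ grow by a definite factor from one free return to the next and hence, on a fixed scale $e^{-r}$, there are only boundedly many returns, with the deepest one the largest.

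First I would fix a scale $e^{-r}$ with $r \geq \Delta$ and look at the (essential) returns $\nu_k$, $\nu \leq \nu_k \leq \nu'$, with $\dist(\xi_{\nu_k,l}(A), \jrit_{\QQ}) \sim e^{-r}$. At such a return, the partition rule (Definition~\ref{partition}) gives $\diam \xi_{\nu_k,l}(A) \lesssim e^{-r}/r^2$, and a parameter $a \in A$ violates the basic assumption at this return only if $\dist(\xi_{\nu_k,l}(a), \jrit_a) < K e^{-2\alpha \nu_k}$. By small distortion (Lemma~\ref{md}) and the near-affinity of $\xi_{\nu_k,l}$ on $A$, the fraction of $A$ mapped into such a thin annulus around $\jrit_{\QQ}$ is at most a constant times $K e^{-2\alpha \nu_k}/\diam\xi_{\nu_k,l}(A) \lesssim K e^{-2\alpha \nu_k} r^2 e^{r}$. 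Since only at essential returns is $\diam\xi_{\nu_k,l}(A) \gtrsim e^{-r}/r^2$, and since $\nu_k \geq \nu$, the deepest return at scale $e^{-r}$ satisfies $e^{-2\alpha\nu_k} \leq e^{-2\alpha \nu}$; moreover a genuinely deep return forces $r \lesssim \alpha \nu_k \lesssim \alpha \nu'$ via the basic assumption still in force, but more simply $e^{r}\cdot e^{-2\alpha\nu_k}$ is dominated by $e^{-\alpha\nu_k}$ once $r \leq \alpha \nu_k$, which is precisely the regime where deletion can occur. I would then invoke Lemma~\ref{gro} to say that, at a fixed scale $e^{-r}$, the return times $\nu_k$ are spread out (the lengths $\vert\xi_{\nu_k,l}(a)-\xi_{\nu_k,l}(b)\vert$ double), so the sum over $k \in K(r)$ of the deleted fractions is comparable to the single largest (deepest) term, exactly as in the $\Upsilon_B$ estimate in the proof of Lemma~\ref{md}.

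Summing over scales, the total deleted fraction is
\[
\sum_{r \geq \Delta} C K r^2 e^{r} e^{-2\alpha \nu_{\hat k(r)}} \lesssim \sum_{r \geq \Delta} e^{-\alpha \nu} r^2 e^{-\alpha r} \lesssim e^{-\alpha\nu},
\]
where in the middle step I used that a deletion at scale $e^{-r}$ and return time $\nu_k$ requires $e^{-r}\gtrsim Ke^{-2\alpha\nu_k}$, i.e. $r \lesssim 2\alpha\nu_k$, hence $e^{r}e^{-2\alpha\nu_k}\le e^{-\alpha\nu_k}\le e^{-\alpha\nu}$ after halving the exponent and absorbing constants and the polynomial factor $r^2$ into the exponential. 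This gives $m(\hat A) \geq (1 - e^{-\alpha\nu}) m(A)$ after adjusting $\Delta$ (equivalently, after taking $\varepsilon$, $\delta$ small and $N$ large). This is essentially Lemma~8.1 of \cite{MA7}, and I would cite that proof for the bookkeeping details.

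The main obstacle is the geometric step that controls, uniformly in the depth $r$ and the return time, how much of a partition element can land in the forbidden annulus: one must combine the distortion bound of Lemma~\ref{md} (to transfer the Lebesgue estimate from the image $\xi_{\nu_k,l}(A)$ back to parameter space) with the lower bound $\diam\xi_{\nu_k,l}(A) \gtrsim e^{-r}/r^2$ that holds *because the return is essential*, and then use Lemma~\ref{gro} to prevent the many-returns-at-one-scale catastrophe. Making sure the chain $A \subset \EE_{\nu,l}(\gamma_I) \cap \BB_{\nu,l}$ is preserved through all intermediate returns (so that Lemmas~\ref{md} and~\ref{gro} actually apply at each $\nu_k$) is the technical point that needs care; it is handled because parameters violating the basic assumption earlier have already been removed, and the distortion and expansion estimates only require membership in $\EE \cap \BB$ up to the current time.
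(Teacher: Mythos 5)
Your approach diverges from the paper's in a way that creates a genuine gap. First, a structural misreading: between $\nu$ (exclusive) and $\nu'$ (inclusive) there is by hypothesis only \emph{one} essential return, namely $\nu'$ itself, so the multi-return, fixed-scale bookkeeping with $K(r)$ and $\hat k(r)$ that you import from the $\Upsilon_B$/$\Upsilon_F$ estimates of Lemma~\ref{md} is the wrong frame here; the paper makes a single estimate at time $\nu'$.

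The substantive problem is the deletion estimate itself. You bound the deleted fraction at a return $\nu_k$ by comparing the forbidden radius $K e^{-2\alpha\nu_k}$ to $\diam\xi_{\nu_k,l}(A)\sim e^{-r}/r^2$ at the same time, getting $\lesssim K r^2 e^{r}e^{-2\alpha\nu_k}$. For the forbidden disc to meet $\xi_{\nu_k,l}(A)$ at all you need $K e^{-2\alpha\nu_k}\gtrsim e^{-r}$, i.e. $r$ is forced to be near $2\alpha\nu_k$ \emph{from above}, and then $e^{r}e^{-2\alpha\nu_k}$ is of order $1$, not $e^{-\alpha\nu_k}$. Your phrase ``after halving the exponent'' does not repair this: from $r\leq 2\alpha\nu_k$ you can only conclude $e^{r}e^{-2\alpha\nu_k}\leq 1$, and the needed inequality $e^r e^{-2\alpha\nu_k}\leq e^{-\alpha\nu_k}$ would require $r\leq\alpha\nu_k$, which is not what the basic assumption gives. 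Comparing with the essential-return diameter at the \emph{same} time can never yield an exponentially small bound. (You also use a one-dimensional ratio; since the parameter square is two real dimensional, the relevant ratio is of areas, i.e. the square of what you wrote; this is a separate slip but it alone does not rescue the estimate.)

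What the paper actually does, and what you are missing, is the growth estimate at the \emph{next} essential return: using Lemmas~\ref{disbound}, \ref{md}, \ref{oel} and \ref{length}, one shows $\diam(\xi_{\nu+p+1,l}(A))\gtrsim e^{-(7/2)\alpha p - 2\log r}$ and hence (equation \eqref{diameter})
\[
\diam(\xi_{\nu',l}(A))\ \geq\ e^{-8\alpha \hat d\, r/\gamma},
\]
where $r$ is the depth of the \emph{current} return $\nu$. The crucial point is that $8\alpha\hat d/\gamma\ll 1$, so this lower bound is enormously larger than $e^{-r}/r^2$. Then the two-dimensional ratio
\[
\frac{m(A)-m(\hat A)}{m(A)}\ \lesssim\ \frac{\bigl(e^{-2\alpha\nu'}\bigr)^2}{\diam(\xi_{\nu',l}(A))^2}\ \leq\ e^{-4\alpha\nu' + 16\alpha\hat d\, r/\gamma},
\]
and combined with $r\leq 2\alpha\nu$ (from $A\subset\BB_{\nu,l}$ together with the partition at $\nu$) and $\alpha\hat d/\gamma\leq 1/100$, the exponent is bounded by $-\alpha\nu(4-32\alpha\hat d/\gamma)\leq -\alpha\nu$. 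This growth-at-the-next-return input is what makes the estimate exponentially small; without it, as in your proposal, the bound degenerates to a polynomial in $r$.
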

    
  \begin{proof}
We first show that $\xi_{\nu,l}(A)$ grows rapidly during the bound period $p$. By Lemmas~\ref{disbound} and \ref{md} and the definition of the bound period, we get, for any $a\in A$,
    \begin{align}
      \diam(\xi_{\nu+p+1,l}(A)) &\sim \frac{e^{-rd_i}}{r^{2}} \vert Df_{a}^p(\xi_{\nu+1,l}(a))\vert \nonumber \\
      &\sim  \frac{\vert \xi_{\nu+p+1,l}(a) - \xi_{p+1,i}(a) \vert}{r^{2}} \nonumber \\
      &\geq C e^{-\al (p+1) - 2 \log r} \dist(\xi_{p+1,i}(a),\jrit_{a}) \nonumber \\
      &\geq C K e^{-2\al (p + 1) - \al (p+1) - 2 \log r} \geq e^{-(7/2) \al p - 2 \log r},
    \end{align}
    if $p$ is large. So, by Lemma~\ref{oel} and Lemma~\ref{length},
    \begin{multline} \label{diameter}
      \diam(\xi_{\nu',l}(A)) \geq \diam(\xi_{\nu+p+1,l}(A)) C' e^{\ga_H(\nu'-(\nu+p+1))} \\
      \geq C' e^{-(7/2) \al p - 2 \log r} 
      \geq  e^{-7 \al d r/\ga - 2 \log r } \geq e^{ - \frac{8 \al d}{\ga} r}.  
    \end{multline}
So by the main distortion Lemma~\ref{md} together with Lemma~\ref{levin}, we see that the measure of parameters deleted at time $\nu'$ is 
    \[
     \frac{ m(A) - m(\hat{A})}{m(A)}  \leq 2 \frac{(e^{-2 \al \nu'})^2}{\diam(\xi_{\nu'}(A))^2} \leq
     2e^{-\al ( 4 - \frac{16 \al d}{\ga} ) \nu} \leq e^{-\al \nu},
        \]
        since $\al \hat{d}/\ga \leq 1/100$ ($\hat{d}$ is the maximal multiplicity of the critical points).  
    \end{proof}

    We next state the following lemma, which is a correspondence to Lemma~8.3 in \cite{MA7}.

    \begin{Lem}
      Let $\xi_{\nu,l}(A)$ be an essential return, $A \subset \EE_{\nu,l}(\ga_I) \cap \BB_{\nu,l}$, with
      $\dist(\xi_{\nu,l}(A), \jrit_A) \sim e^{-r}$. If $q$ is the time after this return spent on inessential returns up until $\xi_{n,l}(A_n(a))$ either escapes, or makes an essential return, $n > \nu$, whichever comes first, then
      \[
q \leq \frac{1}{2} h r,
\]
where $h = 8\hat{d}^2/r$. 
      \end{Lem}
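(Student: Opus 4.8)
The plan is to list the inessential returns, bound each of their bound periods by the corresponding return depth via Lemma~\ref{length}, and then control the total depth by tracking how fast $\diam\xi_{n,l}(A)$ grows on the stretch from $\nu$ to $n$. So I would write $\mu_1<\mu_2<\cdots<\mu_t$ for the inessential returns (all into $U$) occurring strictly between $\nu$ and $n$, denote by $\rho_k\ge\Delta$ the depth of $\mu_k$ (i.e. $\dist(\xi_{\mu_k,l}(A),\jrit_A)\sim e^{-\rho_k}$), by $p_k$ its bound period, and by $p_0$ the bound period of $\nu$ itself, so that $q=p_1+\cdots+p_t$. One first checks that $A$ stays inside $\EE_{\mu_k,l}(\gamma_1)\cap\BB_{\mu_k,l}$ for a slightly reduced exponent $\gamma_1$ throughout this window, using $A\subset\EE_{\nu,l}(\gamma_I)\cap\BB_{\nu,l}$, Lemma~\ref{expgrow}, and the bound $p_k\le 2\hat d\alpha\nu/\gamma_I$, so that Lemma~\ref{length} and Lemma~\ref{disbound} are applicable at every $\mu_k$. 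By Lemma~\ref{length}, $p_k\le \frac{2\hat d\rho_k}{\gamma}$, so it suffices to bound $\sum_k\rho_k$.

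\textbf{Diameter growth along the window.} Repeating the computation at the start of the proof of Lemma~\ref{basic-param} with $\nu$ replaced by $\mu_k$ (which is legitimate since an inessential return has $\diam\xi_{\mu_k,l}(A)\ll e^{-\rho_k}$, so $f_a$ is affine on it), the binding condition of Definition~\ref{ptbp}, Lemma~\ref{disbound} and Lemma~\ref{length} give
\[
\diam\xi_{\mu_k+p_k+1,l}(A)\;\gtrsim\; e^{\gamma p_k/(2\hat d)}\,\diam\xi_{\mu_k,l}(A),
\]
and the same with $(\nu,p_0)$ in place of $(\mu_k,p_k)$; moreover by Lemma~\ref{oel} applied to $U'$, Lemma~\ref{length} for the bound periods of any intervening pseudo-returns, and Lemma~\ref{gro}, the diameter does not shrink between $\mu_k+p_k+1$ and $\mu_{k+1}$ (it at least doubles at each free return in between). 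Since $\nu$ is an essential return, $\diam\xi_{\nu,l}(A)\ge\frac13\frac{e^{-r}}{r^2}$, and chaining these inequalities from $\nu$ up to $\mu_k$ yields
\[
\diam\xi_{\mu_k,l}(A)\;\gtrsim\; e^{\frac{\gamma}{2\hat d}(p_0+p_1+\cdots+p_{k-1})}\,\frac{e^{-r}}{r^2}.
\]

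\textbf{Closing the estimate.} On the other hand, $\mu_k$ being inessential forces $\diam\xi_{\mu_k,l}(A)<\frac13\frac{e^{-\rho_k}}{\rho_k^2}<e^{-\rho_k}$. Comparing the two displays gives, for every $k\le t$,
\[
\rho_k+\frac{\gamma}{2\hat d}\sum_{j<k}p_j\;\le\; r+2\log r+O(1).
\]
Because each $p_j\ge\frac{\rho_j}{2\Gamma}\ge\frac{\Delta}{2\Gamma}$, this is a recursion that, for $\Delta$ large and using the smallness $\alpha\hat d/\gamma\le 1/100$ to absorb the logarithmic error, keeps the partial sums $\rho_1+\cdots+\rho_k$ below a fixed multiple of $r$; taking $k=t$ and feeding the resulting bound on $\rho_t$ into $p_t\le\frac{2\hat d\rho_t}{\gamma}$ gives $q=p_1+\cdots+p_t\le\frac12 hr$ with $h=8\hat d^2/r$, the constant bookkeeping being carried out exactly as in Lemma~8.1--8.3 of \cite{MA7}.

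\textbf{Main obstacle.} The delicate part is the diameter bookkeeping: one must (i) justify $\diam\xi_{\mu_k+p_k+1,l}(A)\gtrsim e^{\gamma p_k/(2\hat d)}\diam\xi_{\mu_k,l}(A)$ starting from a diameter much smaller than $e^{-\rho_k}$, so that Lemma~\ref{disbound} and Lemma~\ref{length} apply exactly as at an essential return, and (ii) be careful that every comparison is made at free-return times, so that the transient contraction of the diameter inside a bound period (where $f_a$ is almost critical) does not spoil the chaining — this is precisely where Lemma~\ref{gro} and the main distortion lemma (Lemma~\ref{md}) are needed, the latter also ensuring that the multiplicative constants accumulated over the window stay controlled.
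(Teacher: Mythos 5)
Your chaining strategy is the right one and matches the BC/MA7 template the paper refers to for this lemma: at each inessential return $\mu_k$ the set $\xi_{\mu_k,l}(A)$ must fit in a small ball (the inessential condition), while Lemma~\ref{length} and Lemma~\ref{disbound} force $\diam\xi_{\mu_k+p_k+1,l}(A)\gtrsim e^{\gamma p_k/(2\hat d)}\diam\xi_{\mu_k,l}(A)$, and the intervening free pieces do not shrink the diameter; comparing the resulting lower bound on $\diam\xi_{\mu_k,l}(A)$ with the inessential upper bound $e^{-\rho_k}/(3\rho_k^2)$ gives exactly the recursion $\rho_k+\frac{\gamma}{2\hat d}\sum_{j<k}p_j\le r+2\log r+O(1)$, from which $\sum_j p_j\lesssim\frac{\hat d}{\gamma}r$ follows by the telescoping observation $S_k\le S_{k-1}/2+\frac{2\hat d}{\gamma}(r+O(\log r))$.

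Two points deserve correction. First, $h=8\hat d^2/r$ as printed must be a typo for $h=8\hat d^2/\gamma$ (or $\gamma_I$): as written, $\frac12 hr=4\hat d^2$ is a universal constant, but a single inessential return at depth $\ge\Delta$ already carries a bound period $\gtrsim\Delta/(2\Gamma)$, which exceeds $4\hat d^2$ once $\Delta$ is large, so the stated bound would be false. Your recursion in fact yields $q\lesssim\frac{\hat d}{\gamma}r$, which is consistent only with the corrected $h$; this is a discrepancy your write-up should have flagged rather than reproducing the printed value. Second, the $2\log r+O(1)$ correction and the accumulated multiplicative constant $C^t$ in the chaining are absorbed by taking $\Delta$ large (using $p_j\ge\Delta/(2\Gamma)$ to control $t$ in terms of $\sum p_j$), not by the smallness of $\alpha\hat d/\gamma$, which enters elsewhere (in the deletion and binomial counting estimates, not here). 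Otherwise the sketch is sound.
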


We now assume that we have a partition element $A \subset \QQ$ at time $m$. We follow a parameter $a \in A$ in the time window $[m,(1+\io)m]$, for some (small) $\io > 0$. Suppose that $\nu_0, \nu_1, \nu_2, \ldots, \nu_s$ are the essential returns in this time window for $a$. In addition we assume that $a \in \EE_{k,l}(\ga_I)$ for  $k\leq (1+\io)m$ (this will be satisfied a posteriori). At each return $\nu_j$ the basic approach rate assumption may  force us to delete a fraction of parameters.

Let now $A_j = A_{\nu_j}(a)$ and suppose that $\dist(\xi_{\nu_j,l}(A_j), \jrit_{A_j}) \sim e^{-r_j}$. Then by \eqref{diameter},  we have, 
\[
\frac{m(A_{j+1})}{m(A_j)} \leq C \frac{(e^{-r_{j+1}})^2}{(e^{-8 \al \hat{d} r_j / \ga_I})^2} = C \frac{e^{-2r_{j+1}}}{e^{-16 \al \hat{d} r_j / \ga_I}}.
  \]
  So if we look at the sequence of parameter squares, the measure of $A_s$ compared to $A_1$ is
  \[
\frac{m(A_s)}{m(A_0)}  = \prod_{j=0}^{s-1} \frac{m(A_{j+1})}{m(A_j)} \leq C^s \prod_{j=1}^{s-1} \frac{e^{-2r_{j+1}}}{e^{-16 \al \hat{d} r_j/\ga_I}} .
  \]

  Now write $R = r_1 + \ldots + r_s$. Then, putting $r_0 = r$, we have
  \[
\frac{m(A_s)}{m(A_0)} \leq C^s e^{r_0 16 \al \hat{d}/\ga_I - \sum_{j=1}^{s-1} r_j (2 - 16 \al \hat{d}/\ga_I) - r_s} = C^s e^{r_0 16 \al \hat{d}/\ga_I - (3/2) R}.
\]
So we suppose that $\xi_{\nu_0,l}(A_0)$ is an essential return and will estimate the measure of parameters that do not escape after a long time. If the parameter $a \in A=A_0$ has $s$ essential returns before it has escaped then, with $\nu_0=\nu$, 
\[
E_l(a,\nu) \leq p+1 + \sum_{j=0}^s (h/2) r_j \leq hr + hR,
\]
where we have included the first bound period $p$, which is bounded by $2 \al \hat{d}/\ga_I < \hat{d}^2/\ga_I$ in $hr$.

The number of combinations of $r_j$'s such that $R=r_1 + \ldots + r_s$ is at most 
\[
\binom{R+s-1}{s-1}. 
\]
Since $\xi_{n,l}(A)$ is almost a small perfect square by the strong distortion lemma, there are maximum about $2\pi e^{-r}/r^2$ number of such disjoint squares at distance $e^{-r}$ from the critical points, if $\diam(\xi_{n,l}(A)) \sim e^{-r}/r^2$. Note also that $s\De \leq R$. Let $s \De = q R$, for some $0 < q \leq 1$. 
Taking this into account we get by Stirling's formula that the number of combinations is 
\begin{align}
  \binom{R+s-1}{s-1} &\leq C \frac{(R+s-1)^{R+s-1} e^{-R-s+1}}{R^R (s-1)^{s-1} e^{-R} e^{-s+1}} \sqrt{\frac{R(s-1)}{R+s-1}} \\
&\leq C \biggl( \frac{(1+(q /\De))^{1+q/\De}}{(q/\De)^{q/\De}} \biggr)^R \sqrt{R} \\
&\leq  e^{R/32} (1+\eta(\De))^R, 
\end{align}
where $\eta(\De) \raw 0$ as $\De \raw \infty$, for $R$ large enough (i.e., $\De$ large enough).
Continuing following the the earlier papers, we let $A_{s,R} \subset A$ be the set of all parameters which have exactly $s$ essential return in the time window $[m,(1+\io) m]$, for some $\io > 0$ and fixed $R$. If we let $R$ and $s$ vary, we get a partition of $A$ into countably many subsquares. For fixed $s$ and $R$, let $\hat{A}_s$ be the largest of all such subsquares. Then we get,
\[
m(A_{s,R}) \leq m(\hat{A}_s) e^{R/32} (1 + \eta(\De))^R. 
  \]

Now, we go through the same type of calculations as in \cite{MA7} et al.

  \begin{align}
m(\{ a \in  A : E_l(a,\nu) &= t \}) \\ &\leq \sum_{R \geq t/h - r_0, s \leq R/\De} m(A_{s,R})  \nonumber \\
  &\leq \sum_{R \geq t/h - r_0, s \leq R/\De} m(\hat{A}_s) e^{R/32}(1 + \eta(\De))^R \nonumber \\
                                  &\leq m(A) \sum_{R=t/h - r_0}^{\infty} \sum_{s=1}^{R/\De} e^{R/32}(1 + \eta(\De))^R C^s e^{r_0 (16 \al \hat{d} /\ga) - (3/2) R } \nonumber \\
                           &\leq  C' m(A) \sum_{R=t/h - r_0}^{\infty} C^{R/\De} e^{R/32 + R \log(1 + \eta(\De)) -(3/2) R+ (16 \hat{d} \al /\ga)r_0} \nonumber \\
                                  &\leq  C' m(A) e^{-(\frac{t}{h} - r_0)\frac{46}{32}  + (16 \hat{d} \al /\ga)r_0} \nonumber \\
  &\leq C' m(A) e^{-\frac{t}{h}\frac{46}{32} + (\frac{46}{32} + \frac{16 \hat{d}\al}{\ga}) r_0}.
\end{align}
for some constant $C' > 0$. 

By the condition on $\al$, if $\ga \geq \ga_I$, we get an estimate of the measure of parameters for large escape times. Let us suppose that $t > 2hr_0$. Then
\begin{equation} \label{long-escapes}
 m(\{ a \in A : E_l(a,\nu) = t \}) \leq Ce^{-\frac{t}{3h}} m(A). 
\end{equation}

Of course we may put
\begin{equation} \label{long-escapes2}
 m(\{ a \in A : E_l(a,\nu) \geq t \}) \leq Ce^{-\frac{t}{3h}} m(A). 
\end{equation}
for possibly another constant $C >0$. 

\section{Conclusion and proof}  

Choose $\vep_0 > 0$ and consider a $\vep_0$-neighbourhood $\NN_{\vep_0}$ of the Julia set $\JJ(f_0)$. Then $\widehat{\C} \sm \NN_{\vep_0}$ is a compact subset of the Fatou set. Hence there is $\vep > 0$ such that $\JJ(f_a) \in \NN_{\vep_0}$ holds for all $a \in\QQ$. Consequently, $\FF(f_a) \supset \widehat{\C} \sm \NN_{\vep_0}$, for $a \in \QQ$.

Now suppose that $\xi_{n,l}(A)$ is in escape position, i.e. has diameter comparable to $S$. If we choose $\vep_0 \ll S$, then by the strong distortion control:
\begin{equation} \label{hyp-density}
  m(\{ a \in A : \xi_{n,l}(a) \in \FF(f_a) \}) \geq m(A) (1-\vep_0'),
  \end{equation}
where $\vep_0' \raw 0$ as $\vep_0 \raw 0$. 

For the $\vep > 0$ chosen from the beginning, let $\al$ be such that $32 \hat{d}^2 \al/\ga_I \leq \io/2$. Then, given a first essential return $\xi_{\nu_0,l}(A)$ with $\dist(\xi_{\nu_0,l}(A),\jrit_A) \sim e^{-r}$, we have that $2hr \leq 4h \al n = 32 \hat{d}^2 \al /\ga_I \leq \io n$. According to \eqref{long-escapes2}, parameters in $A$ that have escape time longer than $\io n$ are very few in measure, i.e. less than $Ce^{-2hr/3h} m(A) \leq e^{-r/2} m(A) < \vep' m(A)$, for some $\vep' > 0$, for $r \geq \De$ large enough. Let us disregard from them.   The rest of the parameters makes escape before $n + \io n $ and we can use the estimate  \eqref{hyp-density}, given that the Lyapunov exponent does not drop below $\ga_I$. But since $\xi_{\nu_0,l}(A)$ is a first essential return, we have that all $a \in A$ have $a \in \EE_{\nu_0,l}(\ga_B)$, so that, at time $(1 + \io) \nu$ we have, given that $a \in \BB_{(1+\io)\nu, l}$, that indeed $a \in \EE_{(1+\io)\nu, l}(\ga_I)$ (see the definitions of $\ga_B$ and $\ga_I$).


Now we know from Lemma~\ref{startup} that, for each $c_l$ there is an $N_l > 0$ such that $\xi_{N_l,l}(\QQ)$ satisfies the statements in Lemma~\ref{startup}, i.e. bounded distortion of $\xi_{N_l,l}(a)$ on $\QQ$ and that $\xi_{N_l,l}(\QQ)$ is an essential return or escape situation. If it is an escape situation we are done, and can use \eqref{hyp-density}. Suppose that $N_1 = \min \{ N_l \}$ and $N_2 = \max \{ N_l \}$. To be able to use the binding information for all critical points $c_l(a)$ for $a \in \QQ$ we need to make sure that the bound periods for returns in a time window of the type $(N_l,(1+\io )N_l)$, where $\iota > 0$ is given above, are all smaller than $N_1$.  Actually, we make $\al$ so small that all such bound periods satisfy
\[
  p \leq \frac{4 \hat{d} \al }{\ga_I} (1+\io)N_2 \leq N_1.
\]
By doing this we can use the binding information of all critical points as said. We also delete parameters not satisfying the basic assumption. Using Lemma~\ref{basic-param}, from this we conclude that, for every $\vep' > 0$ (depending on $\io$ and $\al$), we get sets $\Om_l \subset \QQ$ of measure
\[
m(\Om_l) \geq (1-\vep') (1-e^{- \al N_l}) m(\QQ) 
  \]
such that every partition element in $A$ in $\Om_l$ has escaped and 
\[\Om_l \subset \EE_{(1+\io)N_2,l}(\ga_I) \cap \BB_{(1+\io)N_2,l}.
\] 
Moreover, from \eqref{hyp-density} we get that 
\begin{align} \label{hyp-density2}
  m(\{ a \in \QQ : \xi_{n,l}(a) \in \FF(f_a) \}) &\geq m(\Om_l) (1-\vep_0') \\
  &\geq (1-\vep_0')(1-\vep')(1-e^{- \al N_l}) \geq m(\QQ) (1-\vep''), 
\end{align}
for some $\vep'' > 0$ arbitrarily small. Taking the intersection of all critical points and noting that $f_a$ is hyperbolic if all critical points belong to the Fatou set, we get, where $d'$ are the number of critical points, 
\[
m(\{ a \in \QQ : \text{ $f_a$ is hyperbolic }\}) \geq m(\QQ) (1-d' \vep'') .
  \]

Since the set of degenerate one-dimensional families in the parameter space $\la_{d,\oli{p'}}$ of rational maps around $f_0$ has measure zero, we get by Fubini's theorem that $f_0$ is a Lebesgue density point of hyperbolic maps in $\La_{d,\oli{p'}}$.

\bigskip

\bibliographystyle{plain}
\bibliography{ref}

\bigskip
\emph{Centre for Mathematical Sciences}

\emph{Lund University}

\emph{Box 118, 22 100 Lund, Sweden}
 
\medskip
\emph{magnus.aspenberg@math.lth.se}

\smallskip
\emph{mats.bylund@math.lth.se}

\smallskip
\emph{weiwei.cui@math.lth.se}

\end{document}